\numberwithin{equation}{section}
\newtheorem {theorem}{Theorem}[section]
\newtheorem {lemma}[theorem]{Lemma}
\newtheorem {corollary}[theorem]{Corollary}
\theoremstyle{definition}
\theoremstyle{theorem}
\newcommand{\one}{\mathbf{1}}
\newcommand{\E}{\mathbf{E}}
\newcommand{\p}{\mathbf{P}}
\newcommand{\Var}{\mathbf{Var}}
\newcommand{\N}{\mathbb{N}} 
\newcommand{\R}{\mathbb{R}} 
\def\cA{\mathcal{A}}
\def\cB{\mathcal{B}}
\def\cC{\mathcal{C}}
\def\cE{\mathcal{E}}
\def\cG{\mathcal{G}}
\def\cI{\mathcal{I}}
\def\cM{\mathcal{M}}
\def\cP{\mathcal{P}}
\def\cW{\mathcal{W}}
\def\cX{\mathcal{X}}
\let\@fnsymbol\@alph
\begin{document}

\title{\bfseries Large components in the subcritical \\ Norros-Reittu model}

\author{Matthias Lienau\footnotemark[1]\;\; and Matthias Schulte\footnotemark[2]}

\date{}
\renewcommand{\thefootnote}{\fnsymbol{footnote}}

\footnotetext[1]{Hamburg University of Technology. Email: matthias.lienau@tuhh.de}

\footnotetext[2]{Hamburg University of Technology. Email: matthias.schulte@tuhh.de}

\maketitle

\begin{abstract}The Norros-Reittu model is a random graph with $n$ vertices and i.i.d.\ weights assigned to them. The number of edges between any two vertices follows an independent Poisson distribution whose parameter is increasing in the weights of the two vertices. Choosing a suitable weight distribution leads to a power-law behaviour of the degree distribution as observed in many real-world complex networks. We study this model in the subcritical regime, i.e.\ in the absence of a giant component. For each component, we count the vertices and show convergence of the corresponding point process to a Poisson process. More generally, one can also count only specific vertices per component, like leaves. From this one can deduce asymptotic results on the size of the largest component or the maximal number of leaves in a single component. The results also apply to the Chung-Lu model and the generalised random graph.\vspace*{15pt}	\\
	{\bf Keywords}. Norros-Reittu model, Poisson process convergence, (extremal) counting statistics, order statistics, subcritical regime, power law, rank-1 inhomogeneous random graphs\\
	{\bf MSC}. {60F05, 60G70, 05C80}
\end{abstract}

\section{Introduction and main results}
Complex networks are very large graphs with a difficult structure, which arise in a lot of different fields, ranging from biology and computer science over epidemiology to sociology. One can think of the brain, (tele-)communication networks, the internet or even root systems of trees, to name some  more explicit examples. Even though these graphs appear in very different contexts, surprisingly they often share a couple of properties. One of them is the  scale-free behaviour, which is also called a power-law. This means that the proportion of vertices having degree $k$ is approximately proportional to $k^{-\beta-1}$ for some $\beta>0$, resulting in the proportion of vertices having degree at least $k$ being approximately proportional to $k^{-\beta}$, which can be weakened to $k^{-\beta}\ell(k)$ for some slowly varying function $\ell$. While the first way of phrasing might be more intuitive, the latter allows for a description in terms of the tail of the typical degree distribution of the graph.  See \cite{vdH_book} for detailed information on complex networks and the survey article \cite{welldone} for the scale-free property. Due to the interest in complex networks there has been a lot of attention on finding random graph models which show a scale-free behaviour and ideally also share other properties of real-world complex networks. One of these random graphs is the Norros-Reittu model introduced in \cite{NorrosReittu2006}, which we study in this paper. Note that our results are for a regime where the erased Norros-Reittu model obtained by deleting multiple edges and loops is asymptotically equivalent to similar models such as the Chung-Lu model from \cite{ChungLu, ChungLu2} or the generalised random graph from \cite{GenRandGraph} as shown in Example 3.6 in \cite{Janson2008}. This class of models is also referred to as rank-1 inhomogeneous random graphs.

In this paper we consider the subcritical regime in which no giant component exists. In every component we count the vertices. We study the collection of these counting statistics and prove convergence to a Poisson process. Our framework also allows to count instead of all vertices in a component only vertices of a certain type, such as leaves or vertices of a fixed degree.

The rest of this paper is organised as follows. In the remainder of this section we present our main results after introducing the required notation. All proofs are postponed to Section \ref{sec:2}.

We consider a sequence of (multi-)graphs $(G_n)_{n\in\N}$ given by the Norros-Reittu model, which is defined as follows. Let $\cW=(W_i)_{i\in\N}$ be a sequence of independent copies of a positive random variable $W$. For $n\in\N$, one takes $[n]= \{1,\dots,n\}$ as vertex set of $G_n$ and assigns the weights $W_1,\dots,W_n$ to the vertices. For $x,y\in[n]$, the number of edges between $x$ and $y$ in $G_n$ is given by the $\N_0$-valued random variable $E_n\{x,y\}$, where the random variables $(E_n\{x,y\})_{1\leq x\leq y\leq n}$ are, conditionally on $\cW$, independent with
\begin{align*}
	E_n\{x,y\}\sim\text{Poisson}(W_xW_y/L_n)\quad\text{for}\quad L_n=\sum_{i=1}^nW_i.
\end{align*}  
In particular, the Norros-Reittu model allows multiple edges and loops, i.e.\ edges from a vertex to itself. We require the following assumptions on the weight distribution throughout this paper:
\begin{align*}
	\mathrm{(W)}\quad  &\text{The distribution of the positive random variable $W$ has a regularly}\\
	&\text{varying tail with exponent $\beta>2$ and $\E[W^2]<\E[W]$.}
\end{align*}
The first part of the previous assumption means that 
\begin{align*}
	\p(W>t)= t^{-\beta} L(t)\quad \text{for}\quad t>0,
\end{align*}
where $L:(0,\infty)\to(0,\infty)$ is a slowly varying function, i.e.\ for all $c>0$,
\begin{align*}
	\lim_{t\to\infty}\frac{L(ct)}{L(t)}=1.
\end{align*}
For more details on regular variation and slowly varying functions we refer to  Chapter 2 in \cite{Resnick2007} or to \cite{reg_var}. Note that $\beta>2$  in (W) implies the existence of the second moment of $W$.

The degree of any vertex follows a $\mathrm{Poisson}(W)$ distribution, i.e.\ a mixed Poisson distribution. The regularly varying tail of $W$ ensures that the graph is scale-free in the sense that the degree distribution has a regularly varying tail with exponent $\beta$, which can be proven along the lines of Corollary 13.1 in \cite{BolJanRio2007}. The condition $\E[W^2]<\E[W]$ corresponds to the subcritical regime concerning the phase transition of the component sizes. In \cite{BolJanRio2007} it has been shown in Theorem 3.1, see also Section 16.4, that the graph exhibits a giant component, that is a component which contains a positive fraction of all vertices, if and only if $\E[W^2]>\E[W]$ - this is also called the supercritical regime or supercritical phase. The maximal component size in the case $\E[W^2]=\E[W]$, which is called the critical regime, has been addressed in \cite{vdHBhavLe2010} and \cite{vdHBhavLe2012}. It is shown in \cite{vdHBhavLe2010} that if $\beta>3$, which yields a finite third moment of $W$, the largest component is of order $n^{2/3}$ whereas \cite{vdHBhavLe2012} provides the order $n^{(\beta-1)/\beta}$ for $\beta\in(2,3)$, where $W$ has an infinite third moment. The latter result requires the additional assumption that $L(t)$ converges to a constant as $t\to\infty$. For the subcritical regime where $\E[W^2]<\E[W]$ it is shown in \cite{Janson2008_comp_size} that the order of the largest component is at most $n^{1/\beta}$ and related to the maximal degree of the graph, under the assumption that $\p(W>t)=O(t^{-\beta})$. We complement this result by deriving the asymptotic distribution.

We define $q(n)=F^{-1}(1-1/n)$ for $n\in\N$  where $F^{-1}$ denotes the quantile function of the distribution function $F$ of $W$. For two vertices $x,y\in[n]$ we write $x\sim y$ if $x$ and $y$ are connected via a path in $G_n$ and denote the component of $x$ by $\cC_n(x)= \{z\in[n]\colon x\sim z\}$. Note that $x\sim x$ for all $x\in[n]$ by using the convention that a path may consist of just a single vertex. Throughout the paper we denote by $\p_\cW$ and $\E_\cW$ the conditional probability and expectation when conditioning on the weights $\cW=(W_i)_{i\in\N}$. For all $n\in\N$ we consider a function $(v,G_n)\mapsto \cX_n(v)\subseteq [n]$ where $v\in[n]$ and assume that $\cX_n(v)\subseteq \cC_n(v)$. For $n\in\N$ and $v\in[n]$, $\cX_n(v)$ is the class of vertices we would like to count in the component of $v$ in $G_n$. We write $S_n(v)$ for the cardinality of $\cX_n(v)$ and require the following two assumptions.

(A1) \quad There exists some $\xi>0$ such that
\begin{align*}
	\frac{1}{q(n)}\underset{v=1,\dots,n}{\sup}\left|\E_\cW[S_n(v)]-W_{v}\xi\right|\overset{\p}{\longrightarrow} 0\quad \text{as}\quad n\to\infty.
\end{align*}

(A2) \quad Let $n\in\N, v\in[n]$ and $x\in\cC_n(v)$. Checking whether $x\in\cX_n(v)$ only depends on all \hspace*{51pt} paths that start in $v$ and contain $x$.

In this paper, thus also in assumption (A2), we denote by a path a sequence of distinct vertices $v_1\dots v_k$ such that $v_i$ and $v_{i+1}$ are connected by at least one edge for $i\in[k-1]$. Consequently, assumption (A2) implies that whether $x\in\cX_n(v)$ is satisfied cannot depend on loops or multiple edges.

Finally, we choose from every component in $G_n$ one vertex with the largest weight. If the largest weight is not unique, we choose the one with the smallest label. We denote the collection of these vertices by $V^{\max}_n\subseteq [n]$. We study the point processes
\begin{align*}
	\Xi_n=\sum_{v=1}^n\mathbf{1}\{v\in V^{\max}_{n}\}\delta_{S_n(v)q(n)^{-1}\xi^{-1}}
\end{align*}
for $n\in\N$, where the constant $\xi>0$ is specified in assumption (A1). Here, $\delta_x$ stands for the Dirac measure concentrated at $x\in\mathbb{R}$ and we think of point processes as random counting measures. The indicator demanding the weight of $v$ being maximal in its component ensures that we only count each component once in $\Xi_n$.

Our main theorem shows point process convergence in $M_p((0,\infty])$, the space of point measures on $(0,\infty]$. We will not discuss this space in detail but instead refer to \cite{Resnick2007}, Chapter 3.

\begin{theorem}\label{Thm:conv_to_PP}
	For $n\in\N$, consider the Norros-Reittu model with weights satisfying assumption $\mathrm{(W)}$ and $(\cX_n(v))_{v\in[n]}$ such that assumptions $\mathrm{(A1)}$ and $\mathrm{(A2)}$ hold. Then
	\begin{align}
		\Xi_n= \sum_{v=1}^n\mathbf{1}\{v\in V^{\max}_{n}\}\delta_{S_n(v)q(n)^{-1}\xi^{-1}}\overset{d}{\longrightarrow}\eta_\beta\quad\text{as}\quad n\to\infty,\label{eq:main_thm}
	\end{align}
	where $\eta_\beta$ is a Poisson process with intensity measure $\nu_\beta((a,b])=a^{-\beta}-b^{-\beta}$ for all $0<a<b\leq\infty$. 
\end{theorem}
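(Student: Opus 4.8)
The plan is to establish (\ref{eq:main_thm}) via the standard criterion for Poisson process convergence (see \cite{Resnick2007}, Chapter 3 or Kallenberg's theorem): it suffices to show that for every finite union of disjoint intervals $B=\bigcup_{j=1}^m (a_j,b_j]\subseteq(0,\infty]$ one has $\E[\Xi_n(B)]\to\nu_\beta(B)$ together with $\p(\Xi_n(B)=0)\to e^{-\nu_\beta(B)}$; by an inclusion–exclusion/factorial-moment argument it is in fact enough to prove this for a single interval $(a,b]$, namely $\E[\Xi_n((a,b])]\to a^{-\beta}-b^{-\beta}$ and the avoidance probability $\p(\Xi_n((a,b])=0)\to \exp(-(a^{-\beta}-b^{-\beta}))$, and then upgrade to arbitrary $B$ and to convergence of all factorial moments. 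Because the points of $\Xi_n$ sit at heights $S_n(v)q(n)^{-1}\xi^{-1}$ only for $v\in V^{\max}_n$, counting $\Xi_n((a,b])$ amounts to counting components whose (rescaled) statistic $S_n$, evaluated at the max-weight vertex, falls in $(a,b]$.

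The first and conceptually central step is to reduce the random quantity $S_n(v)$ to the deterministic proxy $W_v$. Assumption (A1) says precisely that $\E_\cW[S_n(v)]$ is uniformly close to $W_v\xi$ after dividing by $q(n)$; I would first argue that with high probability the vertex $v\in V^{\max}_n$ chosen from a component is the unique vertex of large weight in that component, and that for such a vertex $S_n(v)$ concentrates around its conditional mean. Concretely, using (A2), $\cX_n(v)\subseteq\cC_n(v)$ and a branching-process / exploration comparison in the subcritical regime $\E[W^2]<\E[W]$, one controls the conditional variance $\Var_\cW(S_n(v))$ — the exploration from $v$ is dominated by a subcritical multi-type branching process whose progeny has exponential tails, so $S_n(v)=\E_\cW[S_n(v)](1+o_\p(1))$ uniformly over the relevant (high-weight) vertices. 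Combining this with (A1), the event $\{v\in V^{\max}_n,\ S_n(v)q(n)^{-1}\xi^{-1}\in(a,b]\}$ is, up to a vanishing-probability symmetric difference, the event $\{W_v/q(n)\in(a,b],\ W_v \text{ is a ``record'' in its component}\}$. Since a vertex of weight of order $q(n)$ is with high probability the strict maximum of its component (the component is small — size $O_\p(\log n)$ by subcriticality — and other weights in it are $o_\p(q(n))$), the record condition is automatically satisfied, so $\Xi_n((a,b])$ is asymptotically equal to $\sum_{v=1}^n \mathbf 1\{W_v/q(n)\in(a,b]\}$, possibly minus a correction for the rare event that two order-$q(n)$ vertices lie in the same component, which I would show is negligible.

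Granting this reduction, the conclusion is the classical fact that the point process of exceedances $\sum_{v=1}^n \delta_{W_v/q(n)}$ converges to a Poisson process on $(0,\infty]$ with intensity $\nu_\beta$: this is exactly the Poisson convergence for i.i.d.\ samples from a distribution with regularly varying tail $\p(W>t)=t^{-\beta}L(t)$ and the choice $q(n)=F^{-1}(1-1/n)$, for which $n\,\p(W>q(n)x)\to x^{-\beta}$ for every $x>0$ (\cite{Resnick2007}, Proposition 3.21). Thus $\E[\Xi_n((a,b])]\to a^{-\beta}-b^{-\beta}$ and the avoidance/factorial-moment conditions follow from the corresponding (elementary) statements for the binomial exceedance counts, since the weights are independent.

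The main obstacle is the uniform-over-$v$ control in the reduction step: one needs the variance bound and the ``unique large-weight vertex per component'' statement to hold simultaneously for all $v$ with $W_v$ of order $q(n)$, and to handle the possibility that a single component contains more than one such heavy vertex (which would make $\Xi_n$ undercount relative to the exceedance process). I expect to dispatch this using the subcritical exploration bound: the probability that a fixed heavy vertex $v$ has another vertex of weight $\geq \varepsilon q(n)$ in its component is $O(\E_\cW[S_n(v)]\cdot n^{-1}\,n\,\p(W\geq \varepsilon q(n))/n)=O(q(n)^{-1})\cdot O(1)\to 0$ appropriately, and a union bound over the $O_\p(1)$ heavy vertices closes the gap; similarly, replacing $S_n(v)$ by $W_v\xi$ uniformly is what (A1) buys us once the conditional-variance estimate is in place. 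These exploration estimates, together with standard properties of regularly varying quantile functions, are the technical heart; everything else is bookkeeping with the Poisson convergence criterion.
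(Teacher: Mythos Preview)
Your overall architecture matches the paper's: reduce $\Xi_n$ to the exceedance process $\Theta_n=\sum_v\delta_{W_v/q(n)}$, invoke the classical Poisson limit for $\Theta_n$, and bridge the gap via (A1) plus a concentration estimate for $S_n(v)$ around its conditional mean, together with a ``no two heavy vertices in one component'' lemma. The paper does exactly this, phrasing the bridge as $\Xi_n((a,\infty])-\Theta_n((a,\infty])\to 0$ in probability and splitting the difference into pieces $I_1,I_2,I_3$.

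There is, however, a genuine gap in your reduction. You treat only the direction ``$W_v$ large $\Rightarrow$ $S_n(v)$ large'' and its converse for heavy $v$, but you do not control the contribution of components whose maximal weight is \emph{small} (say $W_v\le \varepsilon q(n)$) yet $S_n(v)>aq(n)\xi$. A variance bound of the form $\Var_\cW(S_n(v))\lesssim W_v$ plus Chebyshev gives a per-vertex probability of order $W_v/q(n)^2$; summing over all $n$ light vertices yields roughly $L_n/q(n)^2\asymp n^{1-2/\beta}\to\infty$, so second moments alone cannot close this case. The paper handles it by introducing an intermediate scale $\tilde q(n)=n^{-\gamma}q(n)$, using Chebyshev only on $\{W_v>\tilde q(n)\}$, and for $\{W_v\le\tilde q(n)\}$ applying an $m$-th moment bound $\E_\cW[\mathbf 1\{v\in V^{\max}_n\}S_n(v)^m]\le R_{n,m}\sum_{t\le m}W_v^t$ with $m>\gamma^{-1}$ (their Lemma 2.4). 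Your proposal does not supply any substitute for this step.

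Relatedly, your parenthetical ``the component is small --- size $O_\p(\log n)$ by subcriticality'' is false in this model: the component of a vertex with $W_v\sim q(n)$ has size of order $q(n)\xi\sim n^{1/\beta}$, not $\log n$ (indeed that is precisely what the theorem asserts for $\cX_n(v)=\cC_n(v)$). The heavy-tailed weights destroy the Erd\H{o}s--R\'enyi logarithmic bound. This misconception is what leads you to underestimate the difficulty of the small-$W_v$/large-$S_n$ case. Your conclusion that ``other weights in it are $o_\p(q(n))$'' is nonetheless correct, but it requires a path-counting argument (the paper's Lemma~2.2), not a size bound. Finally, as a technical aside: the paper obtains its variance bound not via a branching comparison but via the Poincar\'e inequality for Poisson functionals, which cleanly exploits (A2); your branching route is plausible but would need care since the third moment $\E[W^3]$ may be infinite for $\beta\in(2,3]$.
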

Note that the points of $\Xi_n$ at zero are not taken into account in \eqref{eq:main_thm} as we work on the space of point processes on $(0,\infty]$. The key idea to prove Theorem \ref{Thm:conv_to_PP} is to approximate $S_n(v)$ by $W_v\xi$ and show that $\Xi_n$ is close to $\Theta_n=\sum_{v=1}^n\delta_{W_vq(n)^{-1}}$, the collection of rescaled weights, which converges to $\eta_\beta$ as $n\to\infty$. A similar strategy was already employed in \cite{Matthias_Chinmoy} to study large degrees of some random graphs. However, our analysis is more involved as the considered statistics of the components are less local than degrees.

From the point process convergence in Theorem \ref{Thm:conv_to_PP} we deduce the following asymptotic behaviour of the maximum of $S_n(1),\dots,S_n(n)$.
\begin{corollary}\label{cor:conv_max}
	Under the same assumptions as in Theorem \ref{Thm:conv_to_PP}
	\begin{align*}
		\frac{1}{q(n)\xi}\underset{v=1,\dots,n}{\max}S_n(v)\overset{d}{\longrightarrow} Z_\beta\quad\text{as}\quad n\to\infty,
	\end{align*}
	where $Z_\beta$ is a random variable following a Fr\'echet distribution with parameter $\beta$.
\end{corollary}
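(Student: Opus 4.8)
The plan is to deduce the corollary from Theorem \ref{Thm:conv_to_PP} via the point process convergence, and to reconcile the maximum over the representatives $V^{\max}_n$ with the maximum over all of $[n]$. Since $\nu_\beta$ has a continuous distribution, the set $\{a\}$ is a $\nu_\beta$-null boundary for every $a>0$, so Theorem \ref{Thm:conv_to_PP} together with the standard continuity properties of point processes on $(0,\infty]$ (see \cite{Resnick2007}, Chapter 3) gives
\begin{align*}
	\p\Big(\frac{1}{q(n)\xi}\max_{v\in V^{\max}_n}S_n(v)\le a\Big)&=\p\big(\Xi_n((a,\infty])=0\big)\\
	&\longrightarrow\p\big(\eta_\beta((a,\infty])=0\big)=e^{-\nu_\beta((a,\infty])}=e^{-a^{-\beta}}
\end{align*}
for every $a>0$. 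Hence $q(n)^{-1}\xi^{-1}\max_{v\in V^{\max}_n}S_n(v)\xrightarrow{d}Z_\beta$, where $Z_\beta$, the location of the largest point of $\eta_\beta$, has the Fr\'echet distribution with parameter $\beta$. This is legitimate because $\eta_\beta((a,\infty])=a^{-\beta}<\infty$ for every $a>0$ while $\eta_\beta((0,\infty])=\infty$ almost surely, so $\eta_\beta$ almost surely has a well-defined finite largest point.

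It remains to pass from $\max_{v\in V^{\max}_n}S_n(v)$ to $\max_{v\in[n]}S_n(v)$. The inequality $\max_{v\in[n]}S_n(v)\ge\max_{v\in V^{\max}_n}S_n(v)$ is trivial and yields one half of the statement. For the other half the idea is to reduce everything to the weights: combining assumption $\mathrm{(A1)}$ with the conditional concentration of $S_n(v)$ around $\E_\cW[S_n(v)]$ — which is precisely the estimate that drives the proof of Theorem \ref{Thm:conv_to_PP}, where $\Xi_n$ is shown to be close to $\Theta_n=\sum_{v=1}^n\delta_{W_vq(n)^{-1}}$ — one obtains
\begin{align*}
	\frac{1}{q(n)}\sup_{v\in[n]}\big|S_n(v)-W_v\xi\big|\overset{\p}{\longrightarrow}0\qquad\text{as }n\to\infty.
\end{align*}
Since the maximal weight among $W_1,\dots,W_n$ is attained at a vertex of $V^{\max}_n$, this display shows that both $q(n)^{-1}\xi^{-1}\max_{v\in[n]}S_n(v)$ and $q(n)^{-1}\xi^{-1}\max_{v\in V^{\max}_n}S_n(v)$ equal $q(n)^{-1}\max_{v\in[n]}W_v+o_{\p}(1)$, so the two maxima share the distributional limit $Z_\beta$ from the first paragraph. (One may alternatively finish directly from this display using the classical extreme value theorem: $W$ has a regularly varying tail with exponent $\beta$ and $q(n)=F^{-1}(1-1/n)$ is the corresponding norming sequence, so $q(n)^{-1}\max_{v\in[n]}W_v\xrightarrow{d}Z_\beta$, which also re-identifies the limit as Fr\'echet with parameter $\beta$.)

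The main obstacle is the second step. The point process $\Xi_n$ records only one vertex per component, so the convergence in Theorem \ref{Thm:conv_to_PP} carries no information whatsoever about the non-representative vertices, and one genuinely needs the uniform-in-$v$ control of the fluctuations of $S_n(v)$ to exclude that some non-representative vertex carries an anomalously large value of $S_n$. This uniform concentration is, however, exactly the mechanism behind Theorem \ref{Thm:conv_to_PP} itself, so no substantially new work is required; the only remaining care is in invoking the continuity of the largest-point functional on $M_p((0,\infty])$ at the almost-sure realisations of $\eta_\beta$, which is ensured by the finiteness of $\eta_\beta((a,\infty])$ noted above.
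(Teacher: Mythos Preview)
Your first paragraph is correct and is exactly the paper's argument: the paper identifies $\p\bigl(\max_v S_n(v)\le tq(n)\xi\bigr)$ with the void probability $\p\bigl(\Xi_n((t,\infty])=0\bigr)$ and passes to the limit via Theorem~3.2 of \cite{Resnick2007}. The paper stops there, so it implicitly treats $\max_{v\in[n]}S_n(v)$ and $\max_{v\in V^{\max}_n}S_n(v)$ as equal; this is of course true whenever $S_n(\cdot)$ is constant on each connected component, as in examples 1 and 3 of Theorem~\ref{Lem:assump_for_different_vertices}.

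You are right to flag the discrepancy in the general case, but the remedy you propose fails. The displayed uniform bound
\[
\frac{1}{q(n)}\sup_{v\in[n]}\bigl|S_n(v)-W_v\xi\bigr|\overset{\p}{\longrightarrow}0
\]
is simply false. Take $\cX_n(v)=\cC_n(v)$, so $S_n(v)=|\cC_n(v)|$: any vertex $u$ of bounded weight lying in the largest component (and with high probability there are many such vertices) has $S_n(u)$ of exact order $q(n)\xi$ while $W_u\xi=O(1)$, so $|S_n(u)-W_u\xi|/q(n)$ stays bounded away from $0$. Nor is this uniform concentration ``precisely the estimate that drives the proof of Theorem~\ref{Thm:conv_to_PP}'': in that proof, vertices with small weight $W_v\le a\tilde q(n)$ are dealt with in the term $I_{3,3}$, and its control via Lemma~\ref{Lem:bound_mth_moment_of_Tnv} uses the indicator $\one\{v\in V^{\max}_n\}$ in an essential way. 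The Chebyshev route through Lemma~\ref{Lem:variance_bound_Tnv} only gives $\Var_\cW(S_n(v))\lesssim W_v$, and a union bound over all $v\in[n]$ then yields a term of order $L_n/q(n)^2\sim n^{1-2/\beta}\to\infty$. Hence the second step of your argument does not go through, and the passage from $\max_{v\in V^{\max}_n}$ to $\max_{v\in[n]}$ in the generality of (A1)--(A2) is not justified by what you have written.
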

Corollary \ref{cor:conv_max} follows immediately from Theorem \ref{Thm:conv_to_PP}. The distribution function of the left-hand side in the corollary is given by void probabilities of the intervals $(t,\infty], t\in\R,$ of the underlying point process $\Xi_n$. Since these intervals are relatively compact in the usual topology on $(0,\infty]$, we can pass on to the limiting process $\eta_\beta$, see e.g.\ Theorem 3.2 in \cite{Resnick2007}, to obtain the result.

In the next theorem we provide some classes of vertices that satisfy the assumptions of Theorem \ref{Thm:conv_to_PP}. For two vertices $x,y\in[n]$ of $G_n$ let ${\rm{d}}_{G_n}(x,y)$ denote the graph distance between $x$ and $y$ which is the number of edges of the shortest path between $x$ and $y$ and let $\deg_{G_n}(x)$ stand for the degree of $x$ which is the number of neighbours of $x$. Since for us a path is a sequence of distinct vertices connected by edges, condition (A2) is only satisfied for the number of neighbours of a vertex $x$ and not for the number of edges incident to $x$. However, our result would not change if the decision whether $x\in\cX_n(v)$ also depends on the multiple edges and self-loops involving $x$. Indeed, one can show that the expected number of loops and of vertex pairs connected by more than one edge are bounded. Thus, the way we count vertices with multiple edges or loops is asymptotically irrelevant as $n\to\infty$ due to the rescaling by $q(n)$ in \eqref{eq:main_thm}.

\begin{theorem}\label{Lem:assump_for_different_vertices}
	Under assumption $\mathrm{(W)}$, the following classes of vertices satisfy \eqref{eq:main_thm} with $\xi$ as stated below. 
	\begin{enumerate}
		\item \textbf{All vertices:} $\cX_n(v)=\cC_n(v)$ with $\xi=\frac{\E[W]}{\E[W]-\E[W^2]}$.
		\item \textbf{Vertices in a fixed distance} $m\in\N$ \textbf{to $v$}: $\cX_n(v)=\{x\in\cC_n(v)\colon {\rm{d}}_{G_n}(x,v)=m\}$ with $\xi=\left(\frac{\E[W^2]}{\E[W]}\right)^{m-1}$.		
		\item \textbf{Vertices with fixed degree $m\in\N$}: $\cX_n(v)=\{x\in\cC_n(v)\colon \deg_{G_n}(x)=m\}$ with $\xi=\frac{1}{(m-1)!}\frac{\E[W^me^{-W}]}{\E[W]-\E[W^2]}$.
		\item \textbf{Terminal trees:} Let $m\in\N$ and consider a rooted tree $T$ with vertex set $V(T)=[m]$ and  root $1$. For $x\in[n]$ we say that $x\in\cX_n(v)$ if and only if there is exactly one path from $v$ to $x$ and the subgraph generated by $x$ and its descendants with $x$ as root is isomorphic to $T$, ignoring loops and multiple edges. We have
		\[\xi=\frac{1}{c(T)}\frac{\E[W^{\deg_T(1)+1}e^{-W}]}{\E[W]-\E[W^2]}\prod_{i=2}^m\frac{\E[W^{\deg_T(i)}e^{-W}]}{\E[W]},\] where $c(T)$ is the order of the automorphism group of $T$ that preserves the root and $\deg_T$ denotes the degree of a vertex in the tree $T$.
	\end{enumerate}
\end{theorem}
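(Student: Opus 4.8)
The plan is to verify assumptions $\mathrm{(A1)}$ and $\mathrm{(A2)}$ for each of the four families; Theorem~\ref{Thm:conv_to_PP} itself plays no role here. Assumption $\mathrm{(A2)}$ is immediate in every case, because the relevant information is always witnessed by paths from $v$ through $x$: for $\cX_n(v)=\cC_n(v)$ one only asks whether such a path exists; for the distance-$m$ family, whether the shortest such path has $m$ edges; for the degree-$m$ family, every neighbour $y$ of $x$ either already lies on a given $v$--$x$ path or extends it to a $v$--$y$ path through $x$, so $\deg_{G_n}(x)$ is determined; and for a terminal tree $T$, the descendants of $x$ and the edges among them --- including the pendant requirement that nothing hangs below the leaves of $T$, which would create a longer path through $x$ --- are all read off from such paths. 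The remark preceding the statement handles the asymptotically irrelevant difference between neighbours and incident edges.

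For $\mathrm{(A1)}$ the plan is to compute $\E_\cW[S_n(v)]=\sum_{x\in[n]}\p_\cW(x\in\cX_n(v))$ by expanding over paths and to compare with $W_v\xi$. The common input is the subcritical path expansion: writing $\mu_n:=L_n^{-1}\sum_{i=1}^nW_i^2$, a path $v=u_0u_1\cdots u_k=x$ is present with $\p_\cW$-probability $1-\prod_{j=1}^ke^{-W_{u_{j-1}}W_{u_j}/L_n}$, whose leading part, summed over the interior vertices and using $1-e^{-s}=s+O(s^2)$, equals $\tfrac{W_vW_x}{L_n}\mu_n^{k-1}$; since $\mu_n\to\mu:=\E[W^2]/\E[W]<1$ almost surely by the law of large numbers and assumption $\mathrm{(W)}$, the geometric factor $\mu_n^{k-1}$ renders the sum over $k$ convergent and controls truncation. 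Two further facts will be used throughout: the expected numbers of loops, multiple edges and cycles through any vertex tend to $0$ uniformly, so $\cC_n(v)$ is tree-like (which is what allows us to identify ``a $v$--$x$ path of length $m$ exists'' with ``$\mathrm{d}_{G_n}(x,v)=m$'' and to treat each terminal subtree as a genuine tree); and, since $q(n)=o(n)$ while $\max_{i\le n}W_i=O_{\p}(q(n))$ and $n^{-1}\sum_{i\le n}g(W_i)\to\E[g(W)]$ a.s.\ for bounded $g$, all the empirical averages appearing below converge --- the only moments used are $\E[W]$, $\E[W^2]$ (finite since $\beta>2$) and expectations of the \emph{bounded} functions $w\mapsto w^\ell e^{-w}$, which is precisely why no stronger moment assumption is required.

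We then treat the families in turn. For $\cX_n(v)=\cC_n(v)$, first- and second-moment estimates on the number of $v$--$x$ paths give $\p_\cW(x\sim v)=(1+o(1))\tfrac{W_vW_x}{L_n(1-\mu_n)}$ for $x\ne v$, hence $\E_\cW[|\cC_n(v)|]=1+(1+o(1))\tfrac{W_v}{1-\mu_n}$ with $(1-\mu_n)^{-1}\to\tfrac{\E[W]}{\E[W]-\E[W^2]}$. For the distance-$m$ family one keeps only length-$m$ paths, giving $\E_\cW[S_n(v)]=(1+o(1))\,W_v\mu_n^{m-1}$ with $\mu_n^{m-1}\to(\E[W^2]/\E[W])^{m-1}$. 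For the degree-$m$ family and for terminal trees, the event $\{x\in\cX_n(v)\}$ factorises, up to lower-order (cycle) terms, as ``$x$ is attached to $\cC_n(v)$ by a unique path'' --- contributing $\tfrac{W_x}{L_n}\E_\cW\!\big[\sum_{p\in\cC_n(v)}W_p\big]=(1+o(1))\tfrac{W_xW_v}{L_n(1-\mu_n)}$ --- times a local event at $x$ in the direction away from $v$, the two using disjoint edges. That local event has conditional probability $e^{-W_x}W_x^{m-1}/(m-1)!$ in the degree-$m$ case, and $h_T(W_x)$ for a terminal tree $T$, where the pendant-subtree probabilities satisfy $h_S(w)=e^{-w}w^d\,c(S)^{-1}\prod_k\big(c(S_k)\,\E[h_{S_k}(W_{\mathrm{sb}})]\big)$ with $d=\deg_S(\mathrm{root})$, $S_k$ the subtrees rooted at the children of the root of $S$, and $W_{\mathrm{sb}}$ a size-biased copy of $W$. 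Unrolling this recursion collapses the automorphism counts into the single factor $c(T)^{-1}$ and the vertex degrees into $\prod_{i=2}^m\E[W^{\deg_T(i)}e^{-W}]/\E[W]$; summing over $x$ against the convergent averages $n^{-1}\sum_xW_x^\ell e^{-W_x}\to\E[W^\ell e^{-W}]$ then matches the leading term of $\E_\cW[S_n(v)]$ with $W_v\xi$ for the constants $\xi$ listed in the third and fourth cases.

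The main obstacle will be to make all of this uniform in $v$, with the error measured against $q(n)$ rather than against $n$. Concretely, one has to establish that, for some random sequence $\epsilon_n\overset{\p}{\longrightarrow}0$, the bound $|\E_\cW[S_n(v)]-W_v\xi|\le\epsilon_n\big(1+W_v+W_v^2/q(n)\big)$ holds for all $v\in[n]$ simultaneously: the first two terms should absorb the relative errors (from $\mu_n-\mu$, from truncating the path expansion at a slowly growing depth $K(n)$ with error of order $\mu_n^{K(n)}$, and from $\max_{i\le n}W_i/L_n$), the last the second-moment and cycle corrections. Granting this, $\tfrac{1}{q(n)}\sup_{v\le n}|\E_\cW[S_n(v)]-W_v\xi|\le\epsilon_n\big(q(n)^{-1}+q(n)^{-1}\max_{i\le n}W_i+q(n)^{-2}(\max_{i\le n}W_i)^2\big)\overset{\p}{\longrightarrow}0$, because $q(n)\to\infty$ and $\max_{i\le n}W_i=O_{\p}(q(n))$. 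A second delicate point, specific to the third and fourth families, is the decoupling of ``$x\in\cC_n(v)$'' from the structure prescribed below $x$: the clean way is to explore $\cC_n(v)$ from $v$ and reveal the downstream neighbourhood of $x$ only when $x$ is first reached, the coupling error again being dominated by the uniformly small probability of a short cycle through $x$. That decoupling together with the uniform-in-$v$ bookkeeping is where essentially all the effort goes.
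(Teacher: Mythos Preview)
Your proposal is correct and shares the paper's core ingredients: path expansion from $v$, uniform cycle control, and a local computation at the endpoint. The paper organises the argument differently, and the reorganisation buys exactly the two things you flag as the main obstacles. Rather than computing $\p_\cW(x\in\cX_n(v))$ vertex by vertex and then decoupling ``$x$ is attached to $\cC_n(v)$'' from ``the local structure below $x$'' via an exploration, the paper parametrises by \emph{paths}: for each $(v_0,\dots,v_k)$ it introduces an event $\cI_n(v_0,\dots,v_k)$ encoding the structure below $v_k$ that is, by construction, conditionally independent of the edges $\{v_{i-1}\leftrightarrow v_i\}$, so that on the tree event $\cB_n(v_0)$ one has $S_n(v_0)=\sum_{\text{paths}}\one\{\text{path exists}\}\one_{\cI_n}$ exactly and the conditional expectation factorises trivially. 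Two short reduction lemmas then collapse (A1) to the single pointwise check $|\p_\cW(\cI_n(v_0,\dots,v_k))-g(W_{v_k},k)|\le p(k)R_n$ with $R_n\overset{\p}{\longrightarrow}0$, for one bounded function $g$ per family; the four cases are then a few lines each. This makes your ``decoupling via exploration'' automatic, and it also bypasses your pointwise claim $\p_\cW(x\sim v)=(1+o(1))\tfrac{W_vW_x}{L_n(1-\mu_n)}$, which is neither proved nor needed: the paper works only with expected path counts, and the gap between $\E_\cW[S_n(v)]$ and the path sum is exactly the cycle term $\E_\cW[\one_{\cB_n(v)^c}\overline{T}_n(v)]\le W_v\,W_{(n)}^2 L_n^{-1}\,U_n$, already uniform in $v$ --- no second-moment argument for individual pairs $(v,x)$ enters. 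Your recursive formula for $h_T$ in the terminal-tree case is equivalent to the paper's direct product expression for $g$; the paper simply writes it out without the recursion.
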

We wish to add some remarks on the classes of vertices in Theorem \ref{Lem:assump_for_different_vertices}. Counting the vertices in distance one of the vertex with largest weight provides its degree so that one studies the point process of degrees of vertices having maximal weight in their component. In that case, our result yields the same limiting process as obtained in \cite{Matthias_Chinmoy}, where the point process of all degrees was investigated. 

In \cite{Janson2008_comp_size} the author shows in a similar setting that the $k$-th largest component size satisfies $|\cC_{(k)}|=\xi d_{(k)} + o_p(q(n))$ where $d_{(k)}$ denotes the $k$-th largest degree of the graph. The idea is to do a breadth-first exploration of the component, starting in the vertex having the largest degree, which is similar in spirit to our approach. If one puts together the results from \cite{Janson2008_comp_size} and \cite{Matthias_Chinmoy}, one can also conclude the first claim of Theorem \ref{Lem:assump_for_different_vertices}. 

The fourth class of vertices above is motivated by the fact that the large components are tree-like as $n\to\infty$. We are able to count the number of subgraphs which are isomorphic to a given rooted tree and are "terminal" in the sense that they only spread away from the vertex $v\in V_n^{\max}$ (in the graph distance sense) and have no further edges attached to them. See also Figure 1 for a visualisation where the tree $T$ is a wedge with the vertex of degree $2$ as root. In the picture one can see a tree with hub $v$ in which there are two vertices that give birth to a terminal wedge, $v_3$ and $v_9$. The vertex $v$ itself does not qualify although $v,v_1,v_2$ form a wedge, because there are further edges. Similarly, the vertex $v_4$ is a root of the wedge $v_4,v_8,v_9$, which is not terminal since $v_9$ has more neighbours than just $v_4$. The subgraph induced by $v,v_2,v_5$ with $v_2$ as root forms also a wedge, but is not terminal since the wedge spreads from its root $v_2$ towards $v$.
\begin{figure}
	\centering
	\scalebox{0.8}{
		\begin{tikzpicture}
			[
			level 1/.style = {black, sibling distance = 4cm},
			level 2/.style = {black, sibling distance = 2.5cm}
			]
			
			\node [circle,draw] at (2,0) {$v$} 
			child {node[circle,draw] {$v_1$}}
			child {node[circle,draw] {$v_2$}
				child{node[circle,draw] {$v_{5}$}}}
			child {node[circle, ultra thick,draw] {$v_3$}
				child {node[circle,draw] {$v_6$}}
				child {node[circle,draw] {$v_7$}}}
			child {node[circle,draw] {$v_4$}
				child {node[circle,draw] {$v_8$}}
				child {node[circle, ultra thick,draw] {$v_9$}
					child {node[circle,draw] {$v_{10}$}}
					child {node[circle,draw] {$v_{11}$}}}
			}

			;

	\end{tikzpicture}}
	\caption{Counting terminal wedges in $\cC_n(v)$, here with $v_3$ and $v_9$ as roots.}
\end{figure}
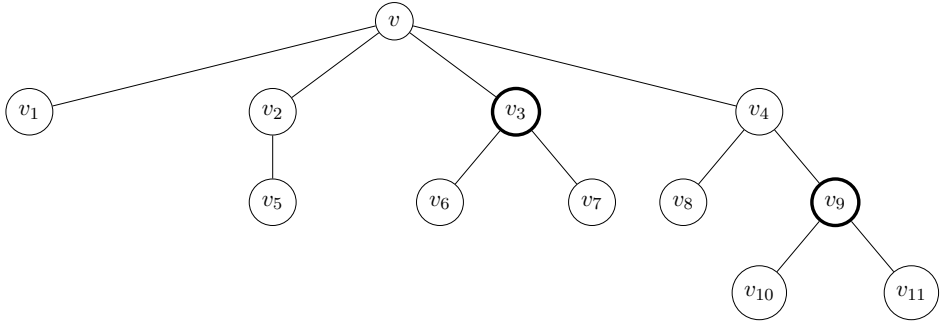

Finally, it is possible to study vertices of a certain degree or the terminal trees from the fourth example in a fixed distance $m\in\N$ to $v$, as in the second example. This of course affects the choice of $\xi$.

We conclude this section with a transfer of the results to related models. Write $p_{ij}=W_iW_j/L_n$ for $i,j\in[n]$ and $n\in\N$. In the Norros-Reittu model denoted by ${\rm{NR}}(n)$ two distinct vertices $i,j\in[n]$ are connected by (at least) one edge with probability 
\begin{align}
	\p_\cW(i\leftrightarrow j)=1-\exp(-p_{ij}),\label{eq:nr}
\end{align}
conditionally on the weights $\cW$. We write ${\rm{ENR}}(n)$ for the corresponding erased Norros-Reittu model without loops or multiple edges. There are a couple of similar models, for example the Chung-Lu model presented by \cite{ChungLu,ChungLu2} denoted by ${\rm{CL}}(n)$, where 
\begin{align}
	\p_\cW(i\leftrightarrow j)=\min(1,p_{ij})\label{eq:cl},
\end{align}
or the generalised random graph model considered by \cite{GenRandGraph} we call ${\rm{GRG}}(n)$ with 
\begin{align}
	\p_\cW(i\leftrightarrow j)=\frac{p_{ij}}{p_{ij}+1}\label{eq:grg}
\end{align}
for distinct vertices $i,j\in[n]$. Note that we consider both ${\rm{CL}}(n)$ and ${\rm{GRG}}(n)$ with slight modifications from their original form as in \cite{vdEsker} (see also Chapter 6 in \cite{vdH_book}). Example 3.6 from \cite{Janson2008} shows that the three random graph models ${\rm{ENR}}(n),{\rm{CL}}(n)$ and ${\rm{GRG}}(n)$ are asymptotically equivalent, given that $\p(W>t)=o(t^{-2})$ -- this tail behaviour is ensured by assumption (W). Asymptotic equivalence means that there exists a coupling of the graphs such that the probability of them being non-isomorphic to each other tends to zero as $n\to\infty$. Because both ${\rm{CL}}(n)$ and ${\rm{GRG}}(n)$ provide simple graphs, it is important to consider ${\rm{ENR}}(n)$ instead of ${\rm{NR}}(n)$. 
\begin{theorem}\label{Thm:NR_CL_GRG} Under assumption {\rm{(W)}} the following hold.
	\begin{enumerate}
		\item Let $(\cX_n(v))_{v\in[n],n\in\N}$ be identical for the Norros-Reittu model ${\rm{NR}}(n)$ and its erased version ${\rm{ENR}}(n)$, i.e.\ $\cX_n(v)$ does not take loops or multiple edges into account. If \eqref{eq:main_thm} is valid for the Norros-Reittu model ${\rm{NR}}(n)$, then it also holds for the Chung-Lu model ${\rm{CL}}(n)$ and the generalised random graph ${\rm{GRG}}(n)$.
		\item For all classes of vertices considered in  Theorem 1.3, \eqref{eq:main_thm} also holds for the Chung-Lu model ${\rm{CL}}(n)$ and the generalised random graph ${\rm{GRG}}(n)$.
	\end{enumerate}
\end{theorem}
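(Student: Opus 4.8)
The plan is to deduce both assertions from the asymptotic equivalence of ${\rm{ENR}}(n)$, ${\rm{CL}}(n)$ and ${\rm{GRG}}(n)$ (Example 3.6 in \cite{Janson2008}), which is available under assumption {\rm{(W)}}. For the first assertion, I would begin by observing that replacing ${\rm{NR}}(n)$ by its erased version ${\rm{ENR}}(n)$ leaves the point process $\Xi_n$ unchanged: deleting loops and multiple edges does not affect which vertices are joined by a path, so the components $\cC_n(v)$, and hence the set $V^{\max}_n$ of per-component weight maximisers, are the same for ${\rm{NR}}(n)$ and ${\rm{ENR}}(n)$, while by hypothesis the sets $\cX_n(v)$, hence the counts $S_n(v)$, are the same as well. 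Thus $\Xi_n^{{\rm{ENR}}(n)}=\Xi_n^{{\rm{NR}}(n)}\overset{d}{\longrightarrow}\eta_\beta$. Next I would use the coupling furnished by the asymptotic equivalence: on a common probability space, with common vertex set $[n]$ and common weight sequence $\cW$, the graphs ${\rm{ENR}}(n)$, ${\rm{CL}}(n)$ and ${\rm{GRG}}(n)$ coincide on an event $A_n$ with $\p(A_n)\to1$. Since $\Xi_n$ is a measurable functional of the weighted graph alone — $S_n(v)$ depends only on the simple graph and $V^{\max}_n$ only on its components together with $\cW$ — the three versions of $\Xi_n$ agree on $A_n$. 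Two sequences of random point measures that agree with probability tending to one share the same distributional limit (for every bounded continuous $g$, $|\E g(\Xi_n^{{\rm{CL}}(n)})-\E g(\Xi_n^{{\rm{ENR}}(n)})|\le 2\|g\|_\infty\,\p(A_n^c)\to0$, and likewise with ${\rm{GRG}}(n)$), so $\Xi_n^{{\rm{CL}}(n)}\overset{d}{\longrightarrow}\eta_\beta$ and $\Xi_n^{{\rm{GRG}}(n)}\overset{d}{\longrightarrow}\eta_\beta$.

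For the second assertion it is enough to verify that each of the four classes of vertices in Theorem \ref{Lem:assump_for_different_vertices} is defined without reference to loops or multiple edges, so that $\cX_n(v)$ is identical for ${\rm{NR}}(n)$ and ${\rm{ENR}}(n)$ and the first assertion applies on top of Theorem \ref{Lem:assump_for_different_vertices}. This is immediate: $\cX_n(v)=\cC_n(v)$ depends only on the connectivity relation; the graph distance ${\rm{d}}_{G_n}$ and the number of neighbours $\deg_{G_n}$ are determined by the underlying simple graph; and the terminal trees in item 4 are, by definition, read off while ignoring loops and multiple edges. Since \eqref{eq:main_thm} holds for ${\rm{NR}}(n)$ for each of these classes by Theorem \ref{Lem:assump_for_different_vertices}, the first assertion transfers it to ${\rm{CL}}(n)$ and ${\rm{GRG}}(n)$.

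I do not anticipate a deep obstacle; the points needing care are essentially bookkeeping. One should check that the coupling realising the asymptotic equivalence keeps the weight sequence $\cW$ fixed — it does, being built edge by edge for a prescribed $\cW$ — so that equality of the graphs forces equality of the entire weighted structure, and hence of $\Xi_n$. And if the asymptotic equivalence is only available as a coupling that makes the graphs isomorphic rather than equal as labelled graphs, one should note that for these rank-1 models the coupling isomorphism may be taken to fix the vertex set $[n]$, so that $V^{\max}_n$ — which depends on $\cW$ and, through the tie-breaking rule, on the labels — still transfers. Granting this, Theorem \ref{Thm:NR_CL_GRG} follows from Example 3.6 in \cite{Janson2008} together with Theorem \ref{Lem:assump_for_different_vertices}.
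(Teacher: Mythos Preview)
Your proposal is correct and follows the same route as the paper: the paper does not give a separate proof section for this theorem but relies on the discussion surrounding its statement, namely that $\Xi_n$ is unchanged when passing from ${\rm NR}(n)$ to ${\rm ENR}(n)$ (since $\cX_n(v)$ ignores loops and multiple edges) and that ${\rm ENR}(n)$, ${\rm CL}(n)$, ${\rm GRG}(n)$ are asymptotically equivalent by Example~3.6 in \cite{Janson2008}. You have spelled out these two steps carefully, including the point that the coupling can be taken with common $\cW$ and common vertex set, which the paper leaves implicit.
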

Note that all choices of $(\cX_n(v))_{v\in[n]}$ we can consider in Theorem \ref{Thm:conv_to_PP} do not take loops and multiple edges into account and are thus identical for ${\rm{NR}}(n)$ and ${\rm{ENR}}(n)$.

There exists another slight variation of ${\rm{ENR}}(n),{\rm{CL}}(n)$ and ${\rm{GRG}}(n)$, namely when one replaces $p_{ij}$ in \eqref{eq:nr}, \eqref{eq:cl} and \eqref{eq:grg} by $p'_{ij}=W_iW_j/(n\E[W])$. We denote the corresponding models by ${\rm{ENR}}'(n),{\rm{CL}}'(n)$ and ${\rm{GRG}}'(n)$.  Note that some authors also use $\tilde{p}_{ij}=W_iW_j/n$ instead of $p'_{ij}$, but these models are related by rescaling the weights. Example 3.1 in \cite{Janson2008} shows that ${\rm{ENR}}'(n),{\rm{CL}}'(n)$ and ${\rm{GRG}}'(n)$ are also asymptotically equivalent. From the strong law of large numbers one obtains that the ratio $p_{ij}/p'_{ij}$ converges almost surely to $1$ as $n\to\infty$ which gives reason to believe that the models ${\rm{ENR}}(n)$ and ${\rm{ENR}}'(n)$ behave similarly. However, a quick calculation shows that Corollary 2.13 in \cite{Janson2008} does not yield asymptotic equivalence of ${\rm{ENR}}(n)$ and ${\rm{ENR}}'(n)$. However, Corollary 2.12 in \cite{Janson2008} provides contiguity of these two models which means that sequences of events $(\cA_n)_{n\in\N}$ whose probability tends to zero for ${\rm{ENR}}(n)$ also tends to zero for ${\rm{ENR}}'(n)$ and vice versa.

Our results also transfer from ${\rm{ENR}}(n)$ to the models ${\rm{ENR}}'(n),{\rm{CL}}'(n)$ and ${\rm{GRG}}'(n)$. We would like to sketch two ways how one can see this. On the one hand, one can repeat all calculations of this paper and replace $p_{ij}$ by $p'_{ij}$. On the other hand, we prove Theorem \ref{Thm:conv_to_PP} by showing \eqref{eq:pp_conv_on_a_infty}. This statement prevails under contiguity. Note that also the assumption {\rm{(A1)}} remains valid when switching between ${\rm{ENR}}(n)$ and ${\rm{ENR}}'(n)$ due to contiguity, while {\rm{(A2)}} does not even depend on the underlying random graph. Therefore, Theorem \ref{Lem:assump_for_different_vertices} remains true for ${\rm{ENR}}'(n)$. The conclusions then transfer to ${\rm{CL}}'(n)$ and ${\rm{GRG}}'(n)$ as these models are asymptotically equivalent to ${\rm{ENR}}'(n)$.

\section{Proofs}\label{sec:2}
\subsection{Proof of Theorem \ref{Thm:conv_to_PP}}
Before going to the graph-related proofs we need a fact about the asymptotic behaviour of the maximum of the first $n$ weights $W_1,\dots,W_n$ under assumption (W). It is part of a well-known result from extreme value theory called Fisher-Tippet-Gnedenko theorem, see e.g.\  Theorem 1.2.1 and Corollary 1.2.4 in \cite{DeHaan_Ferreira}. For $n\in\N$ write $W_{(n)}=\max_{i=1,\dots,n}W_i$. It holds that
\begin{align}
	\frac{W_{(n)}}{q(n)}\overset{d}{\longrightarrow}Z_\beta\quad\text{as}\quad n\to\infty,\label{W(n):scaling}
\end{align}
where $Z_\beta$ is a random variable following a Fr\'echet distribution with parameter $\beta$.  For the scaling factor $q(n)$ we obtain from Lemma 3.3 in \cite{Matthias_Chinmoy} that
\begin{align}
	q(n)=F^{-1}(1-1/n)=n^{1/\beta}\ell(n) \label{q(n):scaling}
\end{align}
for $n\in\N$ with some slowly varying function $\ell$.
In particular it follows that $n^{-1/2}W_{(n)}$ converges in probability to zero as $n\to\infty$, due to $\beta>2$ from assumption (W) and the fact that the slowly varying function $\ell(n)$ grows slower than any positive power of $n$, see Proposition $1.3.6\,(v)$ in \cite{reg_var}. Combined with the weak law of large numbers we obtain
\begin{align}
	\frac{W_{(n)}^2}{L_n}=\frac{W_{(n)}^2}{n}\frac{1}{n^{-1}L_n}\overset{\p}{\longrightarrow}0\quad \text{as}\quad n\to\infty. \label{eq:Wn_Ln}
\end{align}

Next, we introduce notation and discuss convergence of often occurring series. For a set $A$ we denote by $A^k_{\neq}$ the set of all $k$-tuples of pairwise distinct elements of $A$. Recall that $\E_\cW$ denotes the conditional expectation with respect to the weights. Naturally, all equalities and inequalities only hold $\p$-almost surely which we will not write explicitly in the following to keep the notation simple.  We typically approximate the number of vertices in the set $\cX_n(v_0)$ for $v_0\in[n]$ by counting paths originating in $v_0$ and leading to vertices in $\cX_n(v_0)$, resulting in a sum of the following kind,
\begin{align}
	T_{n}(v_0)=\sum_{k=1}^n\sum_{(v_1,\dots,v_k)\in([n]\setminus\{v_0\})^k_{\neq}}\prod_{i=1}^k\one\{v_i\leftrightarrow v_{i-1}\}\one\{v_k\in\cX_n(v_0)\},\label{eq:T_nv}
\end{align}
where $x\leftrightarrow y$ means that the vertices $x$ and $y$ are connected by at least one edge. We first sum over the length of the path starting in $v_0$ and then over its vertices, which we demand to be distinct. Writing $L_n^{(2)}=\sum_{v=1}^nW_v^2$ and using the upper bound one for the last indicator in \eqref{eq:T_nv}, conditional independence of the remaining indicators and $\one\{x\leftrightarrow y\}\leq E_n\{x,y\}$ for $x,y\in[n]$ leads to
\begin{align*}
	\E_\cW[T_n(v_0)]&\leq \sum_{k=1}^n\sum_{(v_1,\dots,v_k)\in[n]^k_{\neq}}\prod_{i=1}^k\E_\cW[E_n\{v_i,v_{i-1}\}]=\sum_{k=1}^n\sum_{(v_1,\dots,v_k)\in[n]^k_{\neq}}\prod_{i=1}^k\frac{W_{v_i}W_{v_{i-1}}}{L_n}\\
	&\leq W_{v_0}\sum_{k=1}^n\sum_{v_1,\dots,v_k=1}^n \prod_{i=1}^{k-1}\frac{W_{v_i}^2}{L_n}\frac{W_{v_k}}{L_n}\leq W_{v_0}\sum_{k=0}^n\bigg(\frac{L_n^{(2)}}{L_n}\bigg)^{k}\eqqcolon W_{v_0}S_{n,\cW}.
\end{align*}
From the strong law of large numbers it follows that
\begin{align*}
	\frac{L_n^{(2)}}{L_n}=\frac{n^{-1}L_n^{(2)}}{n^{-1}L_n}\overset{a.s.}{\longrightarrow}\frac{\E[W^2]}{\E[W]}\quad\text{as}\quad n\to\infty.
\end{align*}
The geometric sum formula together with $\E[W^2]<\E[W]$ by assumption (W) yields
\begin{align}
	S_{n,\cW}=\frac{1-\left(L_n^{(2)}/L_n\right)^{n+1}}{1-L_n^{(2)}/L_n}\overset{a.s.}{\longrightarrow}\frac{\E[W]}{\E[W]-\E[W^2]} \quad \text{as}\quad n\to\infty.\label{conv:Sw}
\end{align}
This argument also provides almost sure convergence for related expressions of the form
\begin{align}
	\sum_{k=0}^n p(k)\bigg(\frac{L_n^{(2)}}{L_n}\bigg)^{k},\label{conv:rel_Sw}
\end{align}
where $p$ is an arbitrary polynomial of finite degree.

For $n\in\N, x,y\in[n]$ with $x\neq y$, $\,A\subseteq [n]$ and $k\geq2$ we write
\begin{align*}
	\cP^{(n)}_k(x,y,A)&=\{(v_1,\dots,v_k)\in[n]^k_{\neq}\colon v_1=x, v_k=y, v_2,\dots,v_{k-1}\notin A, v_1\leftrightarrow\hdots\leftrightarrow v_k\},\\
	\cP^{(n)}_k(x,A)&=\{(v_1,\dots,v_k)\in[n]^k_{\neq}\colon v_1=x, v_2,\dots,v_{k}\notin A, v_1\leftrightarrow\hdots\leftrightarrow v_k\}.
\end{align*}
The first set consists of all $k$-tuples of vertices of $G_n$ that form a path with endpoints $x$ and $y$ whose inner vertices do not lie in $A$. Similarly, the second set consists of all $k$-tuples which form a path whose starting vertex is $x$ and all other vertices do not belong to $A$. We are often interested in bounding the expected number of such paths, conditionally on the weights, where we can use the following lemma.
\begin{lemma}\label{Lem:paths}
	Let $x,y\in[n]$ with $x\neq y$, $\,A\subseteq [n]$ and $k\geq2$. Then
	\begin{align*}
		\E_\cW[|\cP^{(n)}_k(x,y,A)|]\leq \frac{W_xW_y}{L_n}\bigg(\frac{L_n^{(2)}}{L_n}\bigg)^{k-2} \quad\text{and}\quad\E_\cW[|\cP^{(n)}_k(x,A)|]\leq W_x\bigg(\frac{L_n^{(2)}}{L_n}\bigg)^{k-2}.
	\end{align*}
\end{lemma}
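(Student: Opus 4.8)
The plan is to prove both inequalities by the same device already rehearsed in the bound for $\E_\cW[T_n(v_0)]$: replace each connectivity indicator $\one\{v_i\leftrightarrow v_{i+1}\}$ by the edge count $E_n\{v_i,v_{i+1}\}$, whose conditional expectation is the explicit Poisson parameter $W_{v_i}W_{v_{i+1}}/L_n$, and then telescope the product over the internal vertices. Concretely, for the first inequality I would write
\begin{align*}
	\E_\cW[|\cP^{(n)}_k(x,y,A)|]\leq\sum_{\substack{(v_1,\dots,v_k)\in[n]^k\\ v_1=x,\,v_k=y}}\prod_{i=1}^{k-1}\E_\cW[E_n\{v_i,v_{i+1}\}]=\frac{W_x W_y}{L_n^{k-1}}\sum_{v_2,\dots,v_{k-1}=1}^n\prod_{i=2}^{k-1}W_{v_i}^2,
\end{align*}
where I have dropped the constraints $v_i\notin A$, the distinctness constraint, and the connectivity constraints on the right, since each of these only removes nonnegative terms; conditional independence of the $(E_n\{x,y\})$ given $\cW$ is what lets the expectation of the product factor. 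The remaining sum over $v_2,\dots,v_{k-1}$ factorises into $(k-2)$ identical copies of $\sum_{v=1}^n W_v^2 = L_n^{(2)}$, giving $(L_n^{(2)})^{k-2}/L_n^{k-1}=(W_xW_y/L_n)(L_n^{(2)}/L_n)^{k-2}$, which is exactly the claimed bound.

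For the second inequality the argument is identical except that the far endpoint $v_k$ is now free: after passing to edge counts I get
\begin{align*}
	\E_\cW[|\cP^{(n)}_k(x,A)|]\leq\frac{W_x}{L_n^{k-1}}\sum_{v_2,\dots,v_k=1}^n W_{v_k}\prod_{i=2}^{k-1}W_{v_i}^2=\frac{W_x}{L_n^{k-1}}\Big(\sum_{v=1}^n W_v\Big)\Big(\sum_{v=1}^n W_v^2\Big)^{k-2}=W_x\Big(\frac{L_n^{(2)}}{L_n}\Big)^{k-2},
\end{align*}
using $\sum_{v=1}^n W_v = L_n$ to cancel one factor of $L_n$. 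One should note the edge case $k=2$: then $\cP^{(n)}_2(x,y,A)=\{(x,y)\}$ or $\emptyset$ and the product is empty, so the bound reads $\E_\cW[\one\{x\leftrightarrow y\}]\leq W_xW_y/L_n$, which is just $1-e^{-W_xW_y/L_n}\leq W_xW_y/L_n$; similarly $\cP^{(n)}_2(x,A)$ contributes $\sum_{v\notin A}\one\{x\leftrightarrow v\}$ bounded by $\sum_{v=1}^n W_xW_v/L_n = W_x$, matching $W_x(L_n^{(2)}/L_n)^0$. The telescoping identity is clean for all $k\geq 2$ once one is careful that there are exactly $k-1$ edge-factors and exactly $k-2$ free internal summation indices.

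There is essentially no hard analytic step here — the estimate is a soft domination followed by a factorisation of a product measure — so the only thing to get right is bookkeeping: which indices are fixed, how many internal vertices are summed freely, and that every constraint discarded (membership in $A^c$, pairwise distinctness, genuine connectivity along the whole path) only decreases a sum of nonnegative summands. A secondary point worth stating explicitly is that the bound $\one\{x\leftrightarrow y\}\leq E_n\{x,y\}$ holds deterministically (an event indicator is at most the count of edges realising it) and that the $E_n\{x,y\}$ are conditionally independent given $\cW$ by definition of the model, so the expectation of the product over the $k-1$ consecutive pairs along a fixed tuple equals the product of the individual conditional means. No tail assumption on $W$ is used; the lemma is purely algebraic given the weights.
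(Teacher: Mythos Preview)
Your proof is correct and follows essentially the same approach as the paper: bound the connectivity indicators by the Poisson edge counts, use conditional independence along the (distinct-vertex) path to factor the expectation, and then relax the distinctness and $A$-avoidance constraints to obtain the product $(L_n^{(2)}/L_n)^{k-2}$. The only cosmetic difference is that the paper deduces the second inequality from the first by summing over the free endpoint $y$, whereas you redo the computation directly; both are equally short.
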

\begin{proof}
	We only prove the first assertion, the second one follows immediately from the first by summing over $y\in[n]\setminus\{x\}$ and using $\sum_{y\in[n]\setminus\{x\}}W_y\leq L_n$. As all edges in the paths are distinct and therefore conditionally independent we obtain
	\begin{align*}
		&\E_\cW[|\cP^{(n)}_k(x,y,A)|]\leq \E_\cW\bigg[\sum_{(v_1,\dots,v_k)\in[n]^k_{\neq}}\one\{v_1=x,v_k=y\}\prod_{i=2}^k\one\{v_i\leftrightarrow v_{i-1}\}\bigg]\\
		&\leq\sum_{(v_1,\dots,v_k)\in[n]^k_{\neq}}\one\{v_1=x,v_k=y\}\prod_{i=2}^k\E_\cW[E_n\{v_i,v_{i-1}\}]\\
		&=\sum_{(v_1,\dots,v_k)\in[n]^k_{\neq}}\one\{v_1=x,v_k=y\}\prod_{i=2}^k\frac{W_{v_i}W_{v_{i-1}}}{L_n}\\
		&\leq\frac{W_xW_y}{L_n}\bigg(\one\{k=2\}+\sum_{v_2,\dots,v_{k-1}=1}^n\prod_{i=2}^{k-1}\frac{W_{v_i}^2}{L_n}\bigg)=\frac{W_xW_y}{L_n}\bigg(\frac{L_n^{(2)}}{L_n}\bigg)^{k-2},
	\end{align*}
	which is the first inequality.
\end{proof}
The next lemma essentially says that vertices with large weights are typically not connected or, equivalently, every component has at most one vertex with large weight. 
\begin{lemma}\label{Lem:prob_of_sets_goes_to_zero}
	For $a>0$ and $n\in\N$ define the event
	\begin{align*}
		\cA_n&=\{\exists x,y\in[n] \colon x\neq y, x\in\cC_n(y), W_{x}\geq W_{y}\geq aq(n)\}^c.
	\end{align*}	
	Under assumption $\mathrm{(W)}$ we have $\p(\cA_n)\to 1$ as $n\to\infty$.
\end{lemma}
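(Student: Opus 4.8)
The plan is a conditional first-moment (union bound) argument. Working conditionally on the weights $\cW$, I would first bound the indicator $\one\{W_x\geq W_y\geq aq(n)\}$ by $\one\{W_x\geq aq(n)\}\one\{W_y\geq aq(n)\}$ and apply a union bound over ordered pairs of distinct vertices, so that
\begin{align*}
	\p_\cW(\cA_n^c)\leq\sum_{\substack{x,y\in[n]\\ x\neq y}}\one\{W_x\geq aq(n)\}\,\one\{W_y\geq aq(n)\}\;\p_\cW(x\in\cC_n(y)).
\end{align*}
For $x\neq y$ every path connecting $y$ to $x$ is a $k$-tuple of distinct vertices with $2\leq k\leq n$, hence $\{x\in\cC_n(y)\}\subseteq\bigcup_{k=2}^n\{|\cP^{(n)}_k(y,x,\emptyset)|\geq1\}$, and Markov's inequality together with Lemma \ref{Lem:paths} gives
\begin{align*}
	\p_\cW(x\in\cC_n(y))\leq\sum_{k=2}^n\E_\cW\big[|\cP^{(n)}_k(y,x,\emptyset)|\big]\leq\frac{W_xW_y}{L_n}\sum_{k=0}^n\bigg(\frac{L_n^{(2)}}{L_n}\bigg)^k=\frac{W_xW_y}{L_n}\,S_{n,\cW}.
\end{align*}

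Substituting this bound and introducing $N_n=\#\{v\in[n]\colon W_v\geq aq(n)\}$, the remaining double sum is at most $\big(\sum_{v=1}^nW_v\one\{W_v\geq aq(n)\}\big)^2\leq W_{(n)}^2 N_n^2$, so that
\begin{align*}
	\p_\cW(\cA_n^c)\leq\frac{S_{n,\cW}}{L_n}\,W_{(n)}^2 N_n^2=S_{n,\cW}\,N_n^2\,\frac{W_{(n)}^2}{L_n}.
\end{align*}
Now I would check that this upper bound tends to $0$ in probability. Indeed, $S_{n,\cW}$ converges almost surely to the finite constant in \eqref{conv:Sw}; $W_{(n)}^2/L_n\overset{\p}{\longrightarrow}0$ by \eqref{eq:Wn_Ln}; and $N_n$ is tight because $\E[N_n]=n\,\p(W\geq aq(n))$ remains bounded as $n\to\infty$, which follows from $q(n)=F^{-1}(1-1/n)$ and the regular variation of the tail of $W$. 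Since $\p_\cW(\cA_n^c)\leq1$ in any case, bounded convergence then yields $\p(\cA_n^c)=\E[\p_\cW(\cA_n^c)]\to0$, i.e.\ $\p(\cA_n)\to1$.

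The only place where some care is needed is seeing that the crude union bound over all $n^2$ pairs is not too lossy: the two indicators restrict the sum to the (tight, i.e.\ $O_p(1)$ many) heavy vertices, and each surviving term is at most $W_{(n)}^2 S_{n,\cW}/L_n$, which is $o_p(1)$ precisely because $\beta>2$ forces $W_{(n)}$ to have order strictly below $\sqrt n$, as recorded in \eqref{eq:Wn_Ln}. All the remaining ingredients — the path count through Lemma \ref{Lem:paths}, the almost sure limit of $S_{n,\cW}$, and the first-moment estimate for $N_n$ — are routine.
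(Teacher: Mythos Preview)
Your argument is correct and follows essentially the same route as the paper: a conditional union bound over pairs of heavy vertices, the path-count estimate from Lemma~\ref{Lem:paths} giving $\p_\cW(x\sim y)\le W_xW_y S_{n,\cW}/L_n$, and the fact that only $O_p(1)$ vertices exceed $aq(n)$. The only cosmetic difference is that the paper fixes a deterministic level $\ell$, works on $\{N_n\le\ell\}$, and lets $\ell\to\infty$ at the end, whereas you keep the random count $N_n$ and invoke its tightness directly; both lead to the same $o_p(1)$ conditional bound and the conclusion via bounded convergence.
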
	
\begin{proof}
	For all $\ell,n\in\N$ we introduce the event
	\begin{align*}
		\cE_{n,\ell}&= \left\{\sum_{y=1}^n\mathbf{1}\{W_y>aq(n)\}\leq\ell\right\}.
	\end{align*}
	Note that $\cE_{n,\ell}$ is $\cW$-measurable. It holds that
	\begin{align}
		\p(\cA_n^c)\leq\p(\cE_{n,\ell}^c)+\p(\cA_n^c\cap \cE_{n,\ell}).\label{rolf}
	\end{align}
	Here the Markov inequality yields
	\begin{align*}
		\p(\cE_{n,\ell}^c)&=\p\left(\sum_{y=1}^n\mathbf{1}\{W_y>aq(n)\}>\ell\right)\leq\frac{\E\left[\sum_{y=1}^n\mathbf{1}\{W_y>aq(n)\}\right]}{\ell}\\
		&=\frac{n}{\ell}\p\left(W>aq(n)\right).
	\end{align*}
	By Theorem 3.6 and Remark 3.3(a) in \cite{Resnick2007} one has
	\begin{align*}
		\lim_{n\to\infty}	\frac{n}{\ell}\p\left(W>aq(n)\right)=\lim_{n\to\infty} \frac{n}{\ell}\cdot\frac{a^{-\beta}}{n}=\frac{a^{-\beta}}{\ell}.
	\end{align*} Now we address the second summand in \eqref{rolf} and show that it converges to zero for any fixed $\ell\in\N$. 
	For $x,y$ as demanded in $\cA_n$ there must exist a path consisting of at least two vertices which connects $x$ and $y$.  Summing over the possible path lengths, we obtain via Lemma \ref{Lem:paths}
	\begin{align*}
		&\E_\mathcal{W}[\one_{\cE_{n,\ell}}\one_{\cA_n^c}]\leq \E_\cW\bigg[\one_{\cE_{n,\ell}}\sum_{x,y=1}^n\one\{x\neq y\}\sum_{k=2}^n|\cP^{(n)}_k(x,y,\emptyset)|\one\{W_x\geq W_y\geq aq(n)\}\bigg]\\
		&\leq \mathbf{1}_{\cE_{n,\ell}}\sum_{x,y=1}^n\one\{W_x\geq W_y\geq aq(n)\}\sum_{k=2}^n\frac{W_xW_y}{L_n}\bigg(\frac{L_n^{(2)}}{L_n}\bigg)^{k-2}\\
		&\leq \mathbf{1}\left\{\sum_{y=1}^n\mathbf{1}\{W_y>aq(n)\}\leq \ell\right\}\sum_{x=1}^n\frac{W_{x}^2\mathbf{1}\{W_{x}>aq(n)\}}{L_n}\times \sum_{y=1}^n\one\{W_{y}> aq(n)\}\sum_{k=2}^n\bigg(\frac{L_n^{(2)}}{L_n}\bigg)^{k-2}\\
		&\leq \sum_{x=1}^n\frac{W_{x}^2\mathbf{1}\{W_{x}>aq(n)\}}{L_n}\ell\, S_{n,\cW}.
	\end{align*}
	The first sum above converges almost surely to zero due to the finite second moment of $W$ and the strong law of large numbers since $q(n)\to\infty$ as $n\to\infty$. By \eqref{conv:Sw}, $S_{n,\cW}$ converges almost surely to a constant so that the whole term converges almost surely to zero. For the conditional expectation we have the upper bound $1$ so that the dominated convergence theorem implies $\p(\cA_n^c\cap\cE_{n,\ell})\to0$ as $n\to\infty$. From \eqref{rolf} we conclude $\limsup_{n\rightarrow\infty}\p(\cA_n^c)\leq a^{-\beta}\cdot \ell^{-1}$ for all $\ell\in\N$. Letting $\ell\to\infty$, we get $\lim_{n\to\infty}\p(\cA_n^c)=0$.	
\end{proof}
We introduce a slightly different variant of assumption (A2) here.\\

(A2')\quad Let $v\in[n]$ and $x\in\cC_n(v)$. Then $v\notin \cX_n(v)$ and checking whether $x\in\cX_n(v)$  only  \hspace*{49pt} depends on all paths that start in $v$ and contain $x$. \\

The difference is that we do not allow $v$ itself to lie in $\cX_n(v)$.
\begin{lemma}\label{Lem:variance_bound_Tnv}
	Assume {\rm{(W), (A2')}} and define
	\begin{align*}
		\tilde{S}_{n,\cW}&=\sum_{k=0}^n(k+3)^3\bigg(\frac{L_n^{(2)}}{L_n}\bigg)^k,	\	X_n=4\tilde{S}_{n,\cW}^2\bigg(1+\frac{W_{(n)}^2}{L_n}+\tilde{S}_{n,\cW}^2\bigg)\quad \text{and}\quad Y_n=4\frac{n}{L_n}\tilde{S}_{n,\cW}^5
	\end{align*} 
	for $n\in\N$. Then $X_n$ and $Y_n$ converge in probability to positive constants as $n\to\infty$ and for all $n\in\N$ and $v\in[n]$,
	\begin{align*}
		\Var_\mathcal{W}\left(S_n(v)\right)\leq W_{v}X_n+W_{v}\frac{L_n^{(3)}}{n}Y_n.
	\end{align*} 
\end{lemma}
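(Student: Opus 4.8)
The plan is to expand the conditional variance over ordered pairs of vertices and to bound each covariance by counting the paths that create the correlation, thereby reducing everything to series of the type \eqref{conv:rel_Sw} via Lemma~\ref{Lem:paths}. Since $v\notin\cX_n(v)$ by (A2'), I would write $S_n(v)=\sum_{x\in[n]\setminus\{v\}}\one\{x\in\cX_n(v)\}$, so that
\[
\Var_\cW(S_n(v))=\sum_{x}\Var_\cW\!\big(\one\{x\in\cX_n(v)\}\big)+\sum_{x\neq y}\mathrm{Cov}_\cW\!\big(\one\{x\in\cX_n(v)\},\one\{y\in\cX_n(v)\}\big),
\]
where both sums range over $[n]\setminus\{v\}$. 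The diagonal sum is harmless: bounding each conditional variance by the corresponding conditional expectation and using $\cX_n(v)\subseteq\cC_n(v)$ together with $S_n(v)\le T_n(v)$ in the notation of \eqref{eq:T_nv}, it is at most $\E_\cW[S_n(v)]\le\E_\cW[T_n(v)]\le W_vS_{n,\cW}\le W_v\tilde S_{n,\cW}$ by the computation preceding Lemma~\ref{Lem:paths}, which is absorbed into $W_vX_n$.

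For the off-diagonal sum I would fix $x\neq y$ in $[n]\setminus\{v\}$. Since a path only depends on which pairs of vertices are joined by an edge and, by the remark accompanying (A2), loops and multiple edges are irrelevant, each of $\one\{x\in\cX_n(v)\}$ and $\one\{y\in\cX_n(v)\}$ is a function of finitely many $\{0,1\}$-valued edge-indicators and, by (A2'), is determined by those edges that lie on paths starting at $v$ and containing $x$, respectively $y$. A disjoint-occurrence argument in the spirit of the van~den~Berg--Kesten/Reimer inequality then shows that the part of $\E_\cW[\one\{x\in\cX_n(v)\}\one\{y\in\cX_n(v)\}]$ arising from configurations in which the two events are witnessed by disjoint sets of edges is at most $\E_\cW[\one\{x\in\cX_n(v)\}]\,\E_\cW[\one\{y\in\cX_n(v)\}]$, hence cancels in the covariance. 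Therefore $\mathrm{Cov}_\cW(\one\{x\in\cX_n(v)\},\one\{y\in\cX_n(v)\})$ is at most the $\p_\cW$-probability that some edge is simultaneously relevant to both events, which forces a ``theta-shaped'' set of present edges: a path from $v$ to a branch vertex $w$ followed by two internally vertex-disjoint paths from $w$ to $x$ and from $w$ to $y$, with $w\in\{v,x,y\}$ allowed, together with a bounded number of extra pendant paths when $\cX_n(v)$ also constrains the neighbourhoods of $x$ and $y$ (as for fixed degree or terminal trees).

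The remaining step is to estimate this probability. I would sum over the branch vertex $w$, over the lengths of the three path-segments, and over the positions of the marked vertices along them --- placing a marked vertex on a segment of length $k$ costs a factor at most $k+3$, which is the origin of the cubic weights $(k+3)^3$ in $\tilde S_{n,\cW}$ --- and apply Lemma~\ref{Lem:paths} to each segment. The segment from $v$ to $w$ then contributes a factor with the weights $W_v$ and $W_w$, and each of the segments from $w$ to $x$ and from $w$ to $y$ a factor carrying $W_w$ and $W_x$, respectively $W_w$ and $W_y$; since all three segments meet at $w$, the weight $W_w$ enters to the third power. Carrying out $\sum_wW_w^3=L_n^{(3)}$, $\sum_xW_x\le L_n$, $\sum_yW_y\le L_n$ and collecting the geometric series $\sum_{k\ge0}(k+3)^3(L_n^{(2)}/L_n)^k=\tilde S_{n,\cW}$ together with its powers coming from the several segments gives the summand $W_v\frac{L_n^{(3)}}{n}Y_n$ in the generic case $w\notin\{v,x,y\}$, whereas the degenerate cases $w\in\{v,x,y\}$ --- in which one factor $L_n^{-1}$ or one factor $W_w$ is missing and one closes the estimate using $L_n^{(2)}/L_n\le W_{(n)}$, which is also where the factor $W_{(n)}^2/L_n$ in $X_n$ originates --- produce the remaining summands of $W_vX_n$. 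Finally, $X_n$ and $Y_n$ converge in probability to positive constants, because $\tilde S_{n,\cW}$ converges almost surely by the argument around \eqref{conv:rel_Sw}, $W_{(n)}^2/L_n\to0$ by \eqref{eq:Wn_Ln}, and $n/L_n\to1/\E[W]$ by the law of large numbers.

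The main obstacle is the off-diagonal estimate. One must make the cancellation of the ``disjointly witnessed'' contribution rigorous although $\one\{x\in\cX_n(v)\}$ need not be monotone in the edges, and then organise the surviving overlap configurations carefully enough that the possibly divergent ratio $L_n^{(3)}/n$ (recall that $\beta$ may be smaller than $3$) appears only multiplied by $W_v$ and by the convergent quantity $Y_n$, while every other contribution stays of order $W_v$. A secondary subtlety is that for the richer classes of vertices, such as terminal trees, a ``witness'' for $\{x\in\cX_n(v)\}$ is not a single $v$--$x$ path but a small tree attached along such a path, so one has to verify that an overlap between a witness for $x$ and a witness for $y$ still forces the theta-shaped configuration above, possibly with a few extra pendant segments that only raise the power of $\tilde S_{n,\cW}$.
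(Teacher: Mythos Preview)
Your approach is genuinely different from the paper's, and the obstacle you flag at the end is more serious than you suggest.

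The paper does not expand the variance over pairs of vertices at all. Instead it uses that, conditionally on $\cW$, the edge multiplicities $(E_n\{i,j\})_{i\le j}$ form a Poisson process on a finite set, so the Poincar\'e inequality for Poisson functionals (Theorem~18.7 in \cite{LastPenrose}) gives
\[
\Var_\cW(S_n(v))\le\sum_{1\le i\le j\le n}\E_\cW\big[(D_{\{i,j\}}S_n(v))^2\big]\,\frac{W_iW_j}{L_n},
\]
where $D_{\{i,j\}}S_n(v)$ is the change in $S_n(v)$ upon adding one extra edge between $i$ and $j$. Assumption (A2') immediately yields the deterministic bound
\[
|D_{\{i,j\}}S_n(v)|\le\#\{x\in[n]\setminus\{v\}:\text{some path from }v\text{ containing }x\text{ uses the edge }\{i,j\}\},
\]
and the rest is the kind of path-counting via Lemma~\ref{Lem:paths} that you sketch, organised into four cases $\cM_1,\dots,\cM_4$ (position of $\{i,j\}$ relative to $x$) and then subcases for the pair $(x,y)$ in $\cM_\ell^2$. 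No correlation inequality and no notion of ``witness'' is needed, and the argument works uniformly for every $\cX_n$ satisfying (A2'), monotone or not.

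Your route via Reimer's inequality can plausibly be completed for the increasing event $\cX_n(v)=\cC_n(v)$, but the key step ``not disjointly witnessed forces a theta-shaped set of \emph{present} edges'' breaks down under (A2') in general. For classes such as leaves, vertices of fixed degree, or terminal trees, a Reimer witness for $\{x\in\cX_n(v)\}$ must pin down not only an open $v$--$x$ path but also the \emph{absence} of every other edge incident to $x$ (or to the whole terminal subtree). Two such witnesses can then be forced to overlap on an absent-edge coordinate --- for instance the potential edge $\{x,y\}$ itself, which both events require to be closed --- without any additional open path being present. Hence $(A\cap B)\setminus(A\,\square\,B)$ is not contained in a theta-type event, and the covariance bound you want does not follow. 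This is not the ``few extra pendant segments'' situation you describe; there is simply no finite present-edge structure certifying the overlap. The Poincar\'e approach sidesteps this completely because the add-one-edge operator $D_{\{i,j\}}$ only ever creates new open paths, so one is always left counting present paths through $\{i,j\}$, irrespective of how $\cX_n$ depends on absences elsewhere.
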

\begin{proof}
	The convergence of $X_n$ and $Y_n$ follows from the convergence of $\tilde{S}_{n,\cW}$ in \eqref{conv:rel_Sw} and of $W_{(n)}^2/L_n$ in \eqref{eq:Wn_Ln}. Now we address the actual variance bound. \\
	Conditionally on the weights $\cW$, the quantity $S_n(v)$ depends on the independent random variables $(E_n\{i,j\})_{1\leq i\leq j\leq n}$, which is a Poisson process on the discrete space $\{\{i,j\}\colon 1\leq i\leq j\leq n\}$ with intensity measure $\lambda(\{i,j\})=W_iW_j/L_n$ for $1\leq i\leq j\leq n$. This means that $S_n(v)$ is a Poisson functional which is in particular square-integrable as it is bounded by $n$. We use the Poincar\'e inequality for Poisson functionals, see e.g.\ Theorem 18.7 in \cite{LastPenrose}, to derive
	\begin{align}
		\Var_\mathcal{W}\left(S_n(v)\right)\leq \sum_{1\leq i\leq j\leq n}\E_\cW\left[(D_{\{i,j\}}S_n(v))^2\right]\frac{W_iW_j}{L_n},\label{eq:poincare}
	\end{align}
	where $D_{\{i,j\}}$ denotes the difference operator
	\begin{align*}
		D_{\{i,j\}}S_n(v)=S_{n,i,j}(v)-S_n(v)
	\end{align*}
	and $S_{n,i,j}(v)$ is the number of elements in $\cX_n(v)$ after increasing the number of edges between the vertices $i$ and $j$ by $1$. By assumption (A2'), $v\notin\cX_n(v)$ and for a vertex $x\in\cC_n(v)$ with $x\neq v$ the property $x\in\cX_n(v)$ only depends on all paths starting in $v$ and containing the vertex $x$. For $i=j$ we conclude that $D_{\{i,i\}}S_n(v)=0$ as a path is self-avoiding by definition and therefore not allowed to contain loops. Therefore, we do not alter any paths by adding a loop and hence do not change $S_n(v)$. For $i\neq j$ we obtain
	\begin{align*}
		|D_{\{i,j\}}S_n(v)|\leq |\{x\in[n]\setminus\{v\}\colon &\text{There is a path starting in }v\text{ that contains }x\\
		&\text{and the edge } \{i,j\}\}|,
	\end{align*}
	where the existence of the path is to be checked after increasing the number of edges between $i$ and $j$ by 1. We obtain four different scenarios before the addition of one edge between $i$ and $j$ (independent of the number of edges between $i$ and $j$ before the addition). Figure \ref{pic:P1P2P3P4} contains pictures of the different cases. We write $v\dots x$ for a path that starts in $v$ and ends in $x$.
	\begin{figure}[htpb]
		
		\begin{minipage}{0.2\textwidth}
			\begin{tikzpicture}
				\node[shape=circle,draw=black] (v) at (0,2) {$v$};
				\node[shape=circle,draw=black] (x) at (2,2) {$x$};
				\node[shape=circle,draw=black] (i) at (0,0) {$i$};
				\node[shape=circle,draw=black] (j) at (2,0) {$j$};
				
				\path [dashed] (v) edge (x);
				\path [dashed] (x) edge (i);
				\path [-](i) edge (j);
			\end{tikzpicture}
			\caption*{$\cM_1(i,j)$}
		\end{minipage}
		\hfil
		\begin{minipage}{0.2\textwidth}
			\begin{tikzpicture}
				\node[shape=circle,draw=black] (v) at (0,2) {$v$};
				\node[shape=circle,draw=black] (x) at (2,2) {$x$};
				\node[shape=circle,draw=black] (i) at (0,0) {$i$};
				\node[shape=circle,draw=black] (j) at (2,0) {$j$};
				
				\path [dashed] (v) edge (x);
				\path [dashed] (x) edge (j);
				\path [-](i) edge (j);
			\end{tikzpicture}
			\caption*{$\cM_2(i,j)$}
		\end{minipage}
		\hfil
		\begin{minipage}{0.2\textwidth}
			\begin{tikzpicture}
				\node[shape=circle,draw=black] (v) at (0,2) {$v$};
				\node[shape=circle,draw=black] (x) at (2,2) {$x$};
				\node[shape=circle,draw=black] (i) at (0,0) {$i$};
				\node[shape=circle,draw=black] (j) at (2,0) {$j$};
				
				\path [dashed] (v) edge (i);
				\path [dashed] (x) edge (j);
				\path [-](i) edge (j);
			\end{tikzpicture}
			\caption*{$\cM_3(i,j)$}
		\end{minipage}
		\hfil
		\begin{minipage}{0.2\textwidth}
			\begin{tikzpicture}
				\node[shape=circle,draw=black] (v) at (0,2) {$v$};
				\node[shape=circle,draw=black] (x) at (2,2) {$x$};
				\node[shape=circle,draw=black] (i) at (0,0) {$i$};
				\node[shape=circle,draw=black] (j) at (2,0) {$j$};
				
				\path [dashed] (v) edge (j);
				\path [dashed] (x) edge (i);
				\path [-](i) edge (j);
			\end{tikzpicture}
			\caption*{$\cM_4(i,j)$}
		\end{minipage}
		\caption{Visualisation of $x\in\cM_\ell(i,j)$ for $\ell\in\{1,2,3,4\}$.}
		\label{pic:P1P2P3P4}
	\end{figure}	
	\begin{align*}
		\cM_1(i,j)&=\{x\in[n]\setminus\{v\}\colon \text{There is a path $v\dots i$ that includes $x$ but not $j$.}\}\\
		\cM_2(i,j)&=\{x\in[n]\setminus\{v\}\colon \text{There is a path $v\dots j$ that includes $x$ but not $i$.}\}\\
		\cM_3(i,j)&=\{x\in[n]\setminus\{v\}\colon \text{There are two disjoint paths $v\dots i$ and $j\dots x$.}\}\\
		\cM_4(i,j)&=\{x\in[n]\setminus\{v\}\colon \text{There are two disjoint paths $v\dots j$ and $i\dots x$.}\}
	\end{align*}
	By disjoint paths we mean that they do not share a single vertex. Also, we allow a path to consist of only a single vertex, e.g.\ in $M_3$ the special case $v=i$ immediately yields the existence of a path from $v$ to $i$.  We obtain 
	\begin{align*}
		|D_{\{i,j\}}S_n(v)|&\leq \sum_{k=1}^4|\cM_k(i,j)|
	\end{align*}
	and with Jensen's inequality 
	\begin{align*}
		(D_{\{i,j\}}S_n(v))^2&\leq 4\sum_{k=1}^4|\cM_k(i,j)|^2=4\sum_{k=1}^4|\cM_k(i,j)^2|.
	\end{align*}
	Observe that $\cM_1(i,j)=\cM_2(j,i)$ as well as $\cM_3(i,j)=\cM_4(j,i)$. Recall that the case $i=j$ has no contribution so that \eqref{eq:poincare} simplifies to
	\begin{align}
		\Var_\cW(S_n(v))\leq 4\sum_{(i,j)\in[n]^2_{\neq}}\E_\cW[|\cM_1(i,j)^2|]\frac{W_iW_j}{L_n}+4\sum_{(i,j)\in[n]^2_{\neq}}\E_\cW[|\cM_3(i,j)^2|]\frac{W_iW_j}{L_n}.\label{eq:Poincare_2}
	\end{align}
	We start with bounding the first sum and consider $\E_\cW[|\cM_1(i,j)^2|]$. Let $(x,y)\in \cM_1(i,j)^2$. This means that there is a first path $P$ from $v$ through $x$ to $i$ and a second path from $v$ through $y$ to $i$. Note that $P$ must contain at least two vertices because $x$ is not allowed to equal $v$. We distinguish two cases, see also Figure \ref{pic:P1} for a visualisation. 
	\begin{enumerate}
		\item[a)] $y$ lies on $P$.
		\item[b)] $y$ does not lie on $P$.
	\end{enumerate}
	\begin{figure}[htpb]
		
		\begin{minipage}{0.4\textwidth}
			\begin{tikzpicture}
				\node[shape=circle,draw=black] (v) at (0,1) {$v$};
				\node[shape=circle,draw=black] (x) at (1,0) {$x$};
				\node[shape=circle,draw=black] (y) at (2,1) {$y$};
				\node[shape=circle,draw=black] (i) at (3,0) {$i$};
				\node[shape=circle,draw=black] (j) at (4,1) {$j$};
				\path [dashed] (v) edge (x);
				\path [dashed] (x) edge (y);
				\path [dashed] (y) edge (i);
				\path [-](i) edge (j);
			\end{tikzpicture}
			\caption*{$\rm{a})$}
		\end{minipage}
		\hfil
		\begin{minipage}{0.4\textwidth}
			\begin{tikzpicture}
				\node[shape=circle,draw=black] (v) at (0,1) {$v$};
				\coordinate (h1) at (1,1) {};
				\node[shape=circle,draw=black] (x) at (2,2) {$x$};
				\node[shape=circle,draw=black] (y) at (2,0) {$y$};
				\coordinate (h2) at (3,1) {};
				\node[shape=circle,draw=black] (i) at (4,1) {$i$};
				\node[shape=circle,draw=black] (j) at (5,1) {$j$};
				\path [dashed] (v) edge (h1);
				\path [dashed] (h1) edge (x);
				\path [dashed] (h1) edge (y);
				\path [dashed] (x) edge (h2);
				\path [dashed] (y) edge (h2);
				\path [dashed] (h2) edge (i);
				\path [-](i) edge (j);
			\end{tikzpicture}
			\caption*{$\rm{b})$}
		\end{minipage}
		\hfill
		\caption{Scenarios for $(x,y)\in \cM_1(i,j)^2$.}
		\label{pic:P1}
	\end{figure}	
	We write $M_{1,\rm{a}}(i,j)$ for the number of all $(x,y)\in \cM_1(i,j)^2$ of type a), similarly $M_{1,\rm{b}}(i,j)$ for \rm{b}), so that $|\cM_1(i,j)^2|\leq M_{1,\rm{a}}(i,j)+ M_{1,\rm{b}}(i,j)$. We have
	\begin{align*}
		M_{1,\rm{a}}(i,j)\leq \sum_{k=2}^n(k-1)^2\cdot |\cP^{(n)}_k(v,i,\{i,j\})|
	\end{align*}
	because $x$ and $y$ need to lie on any path from $v$ to $i$ which does not include the vertices $i,j$ as inner vertices. From Lemma \ref{Lem:paths} we conclude that 
	\begin{align*}
		\E_\cW[M_{1,\rm{a}}(i,j)]\leq \sum_{k=2}^n(k-1)^2\frac{W_vW_i}{L_n}\bigg(\frac{L_n^{(2)}}{L_n}\bigg)^{k-2}\leq W_v\frac{W_i}{L_n}\tilde{S}_{n,\cW}.
	\end{align*}
	In case b) we use a similar argument to obtain
	\begin{align*}
		M_{1,\rm{b}}(i,j)&\leq \sum_{k=2}^n\sum_{(p_1,\dots,p_k)\in\cP^{(n)}_k(v,i,\{i,j\})}(k-1)\sum_{1\leq c<d\leq k}\sum_{\ell=3}^n(\ell-2)\cdot|\cP^{(n)}_\ell(p_c,p_d,\{i,j,p_1,\dots,p_k\})|
	\end{align*}
	because $x$ needs to lie one some path $P$ from $v$ to $i$ whereas $y$ needs to lie on another path $Q$ which splits off $P$ and merges with $P$ again. Since $y$ does not lie on $P$ due to case b), $Q$ contains at least three vertices and $y$ can be neither its starting point nor its endpoint, denoted by $p_c$ and $p_d$ above. Note that the path $Q$ can be chosen in such a way that it only intersects $P$ in $p_c$ and $p_d$. We therefore obtain conditional independence of all paths above and use Lemma \ref{Lem:paths} to obtain
	\begin{align*}
		\E_\cW[M_{1,\rm{b}}(i,j)]&\leq \sum_{k=2}^n(k-1) \frac{W_vW_i}{L_n}\bigg(\frac{L_n^{(2)}}{L_n}\bigg)^{k-2}\sum_{1\leq c<d\leq k}\sum_{\ell=3}^n(\ell-2)\frac{W_{(n)}^2}{L_n}\bigg(\frac{L_n^{(2)}}{L_n}\bigg)^{\ell-2}\\
		&\leq W_v\frac{W_i}{L_n}\frac{W_{(n)}^2}{L_n}\sum_{k=2}^nk^3\bigg(\frac{L_n^{(2)}}{L_n}\bigg)^{k-2}\sum_{\ell=1}^n\ell\bigg(\frac{L_n^{(2)}}{L_n}\bigg)^{\ell}\leq  W_v\frac{W_i}{L_n}\frac{W_{(n)}^2}{L_n}\tilde{S}_{n,\cW}^2.
	\end{align*}
	We conclude
	\begin{align*}
		\E_\cW[|\cM_1(i,j)^2|]\leq W_v\frac{W_i}{L_n}\tilde{S}_{n,\cW}\bigg(1+\frac{W_{(n)}^2}{L_n}\tilde{S}_{n,\cW}\bigg).
	\end{align*}
	Therefore, the first sum in \eqref{eq:Poincare_2} is bounded by
	\begin{align}
		&\sum_{(i,j)\in[n]^2_{\neq}}^nW_v\frac{W_i}{L_n}\tilde{S}_{n,\cW}\bigg(1+\frac{W_{(n)}^2}{L_n}\tilde{S}_{n,\cW}\bigg)\frac{W_iW_j}{L_n}\leq W_v\tilde{S}_{n,\cW}^2\bigg(1+\frac{W_{(n)}^2}{L_n}\tilde{S}_{n,\cW}\bigg),\label{eq:bd1}
	\end{align}
	where we used that $L_n^{(2)}/L_n\leq \tilde{S}_{n,\cW}$.
	
	We proceed with a similar strategy for $\cM_3(i,j)^2$. There are some more cases due to the fact that $v$ itself may be equal to $i$ whereas this was impossible in the case of $\cM_1(i,j)^2$. Let $(x,y)\in \cM_3(i,j)^2$. This means that we can find a path $P$ from $v$ to $i$ via $p_2,\dots,p_{u-1}$ where $u=1$ corresponds to the case $v=i$. We fix such a path $P$. Additionally, there is a path from $j$ to $x$ and another path from $j$ to $y$. We distinguish the following cases, see also Figure \ref{pic:P3}. By definition of $\cM_3(i,j)$ all paths $Q$ and $R$ in the cases below may be chosen disjoint from the path $P$ from $v$ to $i$.
	\begin{figure}[htpb]
		
		\begin{minipage}{0.4\textwidth}
			\begin{tikzpicture}
				\node[shape=circle,draw=black] (v) at (0,1) {$v$};
				\node[shape=circle,draw=black] (i) at (1.5,1) {$i$};
				\node[shape=circle,draw=black] (j) at (3,1) {$j$};
				\path [dashed] (v) edge (i);
				\path [-](i) edge (j);
			\end{tikzpicture}
			\caption*{$a)$}
		\end{minipage}
		\hfil
		\begin{minipage}{0.4\textwidth}
			\begin{tikzpicture}
				\node[shape=circle,draw=black] (v) at (0,1) {$v$};
				\node[shape=circle,draw=black] (i) at (1.5,1) {$i$};
				\node[shape=circle,draw=black] (j) at (3,1) {$j$};
				\node[shape=circle,draw=black] (x) at (4.5,2) {$x$};
				\node[shape=circle,draw=black] (y) at (4.5,0) {$y$};
				\path [dashed] (v) edge (i);
				\path [-](i) edge (j);
				\path [dashed] (j) edge (x);
				\path [dashed] (x) edge (y);
			\end{tikzpicture}
			\caption*{$b)$}
		\end{minipage}\\
		
		\begin{minipage}{0.4\textwidth}
			\begin{tikzpicture}
				\node[shape=circle,draw=black] (v) at (0,1) {$v$};
				\node[shape=circle,draw=black] (i) at (1.5,1) {$i$};
				\node[shape=circle,draw=black] (j) at (3,1) {$j$};
				\node[shape=circle,draw=black] (x) at (4.5,2) {$x$};
				\node[shape=circle,draw=black] (y) at (4.5,0) {$y$};
				\path [dashed] (v) edge (i);
				\path [-](i) edge (j);
				\path [dashed] (j) edge (x);
				\path [dashed] (j) edge (y);
			\end{tikzpicture}
			\caption*{$c)$}
		\end{minipage}
		\hfil
		\begin{minipage}{0.4\textwidth}
			\begin{tikzpicture}
				\node[shape=circle,draw=black] (v) at (0,1) {$v$};
				\node[shape=circle,draw=black] (i) at (1.5,1) {$i$};
				\node[shape=circle,draw=black] (j) at (3,1) {$j$};
				\node[shape=circle,draw=black] (x) at (4.5,2) {$x$};
				\node[shape=circle,draw=black] (y) at (4.5,0) {$y$};
				\coordinate (h) at (4,1);
				\path [dashed] (v) edge (i);
				\path [-](i) edge (j);
				\path [dashed] (j) edge (h);
				\path [dashed] (h) edge (x);
				\path [dashed] (h) edge (y);
			\end{tikzpicture}
			\caption*{$d)$}
		\end{minipage}
		\caption{Different scenarios for $(x,y)\in \cM_3(i,j)^2$.}
		\label{pic:P3}
	\end{figure}	
	\begin{enumerate}
		\item[a)] $x=y=j$
		\item[b)] $|\{x,y,j\}|\geq 2$ and there is a path $Q$ that starts in $j$ and contains $x$ and $y$.
		\item[c)] $|\{x,y,j\}|=3$ and there is a path $Q$ from $j$ to $x$ and a path $R$ from $j$ to $y$ such that $j$ is their only common vertex.
		\item[d)] $|\{x,y,j\}|=3$, there is a path $Q$  from $j$ to $x$ and there is a second path $R$ from one of $Q$'s inner vertices to $y$ that is disjoint from $Q$ apart from its first vertex.
	\end{enumerate}
	For fixed $i,j,P$ we write $M_{3,\rm{z}}(i,j,P)$ for the number of all $(x,y)\in\cM_3(i,j)^2$ considered in case $\rm{z})$ for $\rm{z}\in\{a,b,c,d\}$. We have 
	\begin{align*}
		\one\{v\leftrightarrow p_2\leftrightarrow \hdots \leftrightarrow p_{u-1}\leftrightarrow i\}M_{3,\rm{a}}(i,j,P)=\one\{v\leftrightarrow p_2\leftrightarrow \hdots \leftrightarrow p_{u-1}\leftrightarrow i\}
	\end{align*}
	and by counting all possibilities to place $x$ and $y$ on $Q$ we obtain
	\begin{align*}
		&\one\{v\leftrightarrow p_2\leftrightarrow \hdots \leftrightarrow p_{u-1}\leftrightarrow i\}M_{3,\rm{b}}(i,j)\\
		&\leq \one\{v\leftrightarrow p_2\leftrightarrow \hdots \leftrightarrow p_{u-1}\leftrightarrow i\} \sum_{k=2}^n |\cP^{(n)}_k(j,\{v,i,j,p_2,\dots,p_{u-1}\})|k^2,
	\end{align*}
	so that Lemma \ref{Lem:paths} yields
	\begin{align*}
		&\E_\cW[\one\{v\leftrightarrow p_2\leftrightarrow \hdots \leftrightarrow p_{u-1}\leftrightarrow i\}M_{3,\rm{b}}(i,j)]\\
		&\leq \p_\cW(v\leftrightarrow p_2\leftrightarrow \hdots \leftrightarrow p_{u-1}\leftrightarrow i)W_j\sum_{k=2}^nk^2\bigg(\frac{L_n^{(2)}}{L_n}\bigg)^{k-2}\\
		&\leq\p_\cW(v\leftrightarrow p_2\leftrightarrow \hdots \leftrightarrow p_{u-1}\leftrightarrow i)W_j\tilde{S}_{n,\cW}.
	\end{align*}
	For c) we obtain similarly
	\begin{align*}
		&\one\{v\leftrightarrow p_2\leftrightarrow \hdots \leftrightarrow p_{u-1}\leftrightarrow i\}M_{3,\rm{c}}(i,j)\\
		&\leq \one\{v\leftrightarrow p_2\leftrightarrow \hdots \leftrightarrow p_{u-1}\leftrightarrow i\} \sum_{k=2}^nk\sum_{(q_1,\dots,q_k)\in\cP^{(n)}_k(j,\{v,p_2,\dots,p_{u-1},i\})}\\
		&\quad\times \sum_{\ell=2}^n\ell\cdot|\cP^{(n)}_\ell(j,\{v,p_2,\dots,p_{u-1},i,q_1,\dots,q_k\})|.
	\end{align*}
	We derive due to conditional independence
	\begin{align*}
		&\E_\cW[\one\{v\leftrightarrow p_2\leftrightarrow \hdots \leftrightarrow p_{u-1}\leftrightarrow i\}M_{3,\rm{c}}(i,j)]\\
		&\leq \p_\cW(v\leftrightarrow p_2\leftrightarrow \hdots \leftrightarrow p_{u-1}\leftrightarrow i)W_j^2\bigg(\sum_{k=2}^nk\bigg(\frac{L_n^{(2)}}{L_n}\bigg)^{k-2}\bigg)^2\\
		&\leq\p_\cW(v\leftrightarrow p_2\leftrightarrow \hdots \leftrightarrow p_{u-1}\leftrightarrow i)W_j^2\tilde{S}_{n,\cW}^2.
	\end{align*}
	In case d) we have
	\begin{align*}
		&\one\{v\leftrightarrow p_2\leftrightarrow \hdots \leftrightarrow p_{u-1}\leftrightarrow i\}M_{3,\rm{d}}(i,j)\\
		&\leq \one\{v\leftrightarrow p_2\leftrightarrow \hdots \leftrightarrow p_{u-1}\leftrightarrow i\} \sum_{k=3}^nk\sum_{(q_1,\dots,q_k)\in\cP^{(n)}_k(j,\{v,p_2,\dots,p_{u-1},i\})}\\
		&\quad\times\sum_{\ell=2}^n\sum_{1<t<k}\ell\cdot|\cP^{(n)}_\ell(q_t,\{q_1,\dots,q_k,v,p_2,\dots,p_{u-1},i\})|.
	\end{align*}
	When calculating the conditional expectation, we again use conditional independence. This time, the weight $W_{q_t}$ of the vertex in which the second path originates will appear with a third power. It already has a second power since it is an inner vertex of the first path and as starting point of the second path, we obtain another factor. In total, we get
	\begin{align*}
		&\E_\cW[\one\{v\leftrightarrow p_2\leftrightarrow \hdots \leftrightarrow p_{u-1}\leftrightarrow i\}M_{3,\rm{d}}(i,j)]\\
		&\leq \p_\cW(v\leftrightarrow p_2\leftrightarrow \hdots \leftrightarrow p_{u-1}\leftrightarrow i) W_j\frac{L_n^{(3)}}{L_n}\sum_{k=3}^nk^2\bigg(\frac{L_n^{(2)}}{L_n}\bigg)^{k-3}\sum_{\ell=2}^n\ell\bigg(\frac{L_n^{(2)}}{L_n}\bigg)^{\ell-2}\\
		&\leq \p_\cW(v\leftrightarrow p_2\leftrightarrow \hdots \leftrightarrow p_{u-1}\leftrightarrow i) W_j\frac{L_n^{(3)}}{L_n}\tilde{S}_{n,\cW}^2.
	\end{align*}
	Summing over all different choices for $P$ to connect $v$ and $i$ via $p_2,\dots,p_{u-1}$ yields
	\begin{align*}
		&\E_\cW[|\cM_3(i,j)^2|]\\
		&\leq  \bigg(\one\{v=i\}+\p_\cW(v\leftrightarrow i)+ \sum_{u=3}^n\sum_{(p_2,\dots,p_{u-1})\in([n]\setminus\{v,i\})^{u-2}_{\neq}}\hspace{-40pt}\p_\cW(v\leftrightarrow p_2\leftrightarrow \hdots \leftrightarrow p_{u-1}\leftrightarrow i)\bigg)\\
		&\quad\times \bigg(1+W_j\tilde{S}_{n,\cW}+W_j^2\tilde{S}_{n,\cW}^2+W_j\frac{L_n^{(3)}}{L_n}\tilde{S}_{n,\cW}^2\bigg)\\
		&\leq \bigg(1\{v=i\}+\sum_{k=2}^n\E_\cW[|\cP^{(n)}_{k}(v,i,\emptyset)|]\bigg)\bigg(1+W_j\tilde{S}_{n,\cW}+W_j^2\tilde{S}_{n,\cW}^2+W_j\frac{L_n^{(3)}}{L_n}\tilde{S}_{n,\cW}^2\bigg)\\
		&\leq \bigg(1\{v=i\}+\frac{W_vW_i}{L_n}\tilde{S}_{n,\cW}\bigg)\bigg(1+W_j\tilde{S}_{n,\cW}+W_j^2\tilde{S}_{n,\cW}^2+W_j\frac{L_n^{(3)}}{L_n}\tilde{S}_{n,\cW}^2\bigg),
	\end{align*}
	where the last inequality uses Lemma \ref{Lem:paths}. This bounds the second sum in \eqref{eq:Poincare_2} by
	\begin{align}
		&\sum_{(i,j)\in[n]^2_{\neq}}\bigg(1\{v=i\}+\frac{W_vW_i}{L_n}\tilde{S}_{n,\cW}\bigg)\bigg(1+W_j\tilde{S}_{n,\cW}+W_j^2\tilde{S}_{n,\cW}^2+W_j\frac{L_n^{(3)}}{L_n}\tilde{S}_{n,\cW}^2\bigg) \frac{W_iW_j}{L_n}\nonumber\\
		&\leq W_v\bigg(1+\frac{L_n^{(2)}}{L_n}\tilde{S}_{n,\cW}\bigg)\bigg(1+\frac{L_n^{(2)}}{L_n}\tilde{S}_{n,\cW}+\frac{L_n^{(3)}}{L_n}\tilde{S}_{n,\cW}^2+\frac{L_n^{(2)}L_n^{(3)}}{L_n^2}\tilde{S}_{n,\cW}\bigg)\nonumber\\
		&\leq W_v\tilde{S}_{n,\cW}^2\bigg(\tilde{S}_{n,\cW}^2+\frac{L_n^{(3)}}{L_n}\tilde{S}_{n,\cW}^3\bigg),\label{eq:bd2}
	\end{align}
	where we used $1+L_n^{(2)}\tilde{S}_{n,\cW}/L_n\leq \tilde{S}_{n,\cW}^2$. Combining \eqref{eq:Poincare_2} with \eqref{eq:bd1} and \eqref{eq:bd2}, we obtain
	\begin{align*}
		&\Var_\cW(S_n(v))\leq 4W_v \bigg( \tilde{S}_{n,\cW}^2\bigg(1+\frac{W_{(n)}^2}{L_n}\tilde{S}_{n,\cW}\bigg)+\tilde{S}_{n,\cW}^2\bigg(\tilde{S}_{n,\cW}^2+\frac{L_n^{(3)}}{L_n}\tilde{S}_{n,\cW}^3\bigg)\bigg)\\
		&=4W_v\tilde{S}_{n,\cW}^2\bigg(1+\frac{W_{(n)}^2}{L_n}+\tilde{S}_{n,\cW}^2\bigg)+4W_v \frac{L_n^{(3)}}{L_n}\tilde{S}_{n,\cW}^5=W_vX_n+W_v\frac{L_n^{(3)}}{n}Y_n,
	\end{align*}
	which finishes the proof.	
\end{proof}
The following lemma essentially shows that the conditional $m$-th moment of the number of vertices in a component is bounded by a polynomial of degree $m$ in the largest weight of the component, up to a term which converges almost surely. 
\begin{lemma}\label{Lem:bound_mth_moment_of_Tnv}
	Assume {\rm{(W), (A2')}}, let $m\in\N$ and define for $n\in\N$,
	\begin{align*}
		R_{n,m}= m^m\sum_{t=1}^mt!\left(\sum_{k=0}^n\left(\frac{L_n^{(2)}}{L_n}\right)^k(k+3)^t\right)^t.
	\end{align*} Then $R_{n,m}$ converges almost surely to a constant as $n\to\infty$ and for all $v\in[n]$ it holds that
	\begin{align*}
		\E_\mathcal{W}\left[\one\{v\in V^{\max}_{n}\}S_n(v)^m\right]\leq R_{n,m}\sum_{t=1}^m W_v^t.
	\end{align*}
\end{lemma}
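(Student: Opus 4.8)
First, the almost sure convergence of $R_{n,m}$: each inner sum $\sum_{k=0}^{n}(L_n^{(2)}/L_n)^{k}(k+3)^{t}$ is of the form \eqref{conv:rel_Sw} with the polynomial $p(k)=(k+3)^{t}$, hence converges almost surely, and $R_{n,m}$ is a fixed finite algebraic combination of such sums. For the inequality we may assume $v\in V_n^{\max}$ and $S_n(v)\geq 1$, the claim being trivial otherwise. Expanding $S_n(v)^{m}=\sum_{(x_1,\dots,x_m)\in\cX_n(v)^m}1$ and grouping the $m$-tuples according to which coordinates agree, the number of $m$-tuples realising a prescribed set of $t$ distinct values is at most $t^{m}\leq m^{m}$, so that $S_n(v)^{m}\leq m^{m}\sum_{t=1}^{m}|\cX_n(v)^{t}_{\neq}|$. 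Since any power $W_v^{e}$ with $1\leq e\leq t$ is bounded by $\sum_{s=1}^{m}W_v^{s}$, it therefore suffices to establish, for each fixed $t\leq m$, a bound of the shape
\begin{align*}
	\E_\cW\big[\one\{v\in V_n^{\max}\}\,|\cX_n(v)^{t}_{\neq}|\big]\;\leq\; c_t\,\bigg(\sum_{k=0}^{n}\Big(\tfrac{L_n^{(2)}}{L_n}\Big)^{k}(k+3)^{t}\bigg)^{t}\sum_{s=1}^{t}W_v^{s}
\end{align*}
with a constant $c_t$ depending only on $t$; the prefactors $m^{m}$ and $t!$ occurring in $R_{n,m}$ must then absorb $c_t$ upon summing over $t$.

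The estimate comes from a path count. By (A2') we have $v\notin\cX_n(v)$, and $\cX_n(v)\subseteq\cC_n(v)$, so for distinct $x_1,\dots,x_t\in\cX_n(v)$ each $x_i$ is joined to $v$ by a path of length at least one. Building these connections one after another — a path $v\dots x_1$; then, for each $i\geq 2$, either $x_i$ is already a vertex of the current subgraph, or it is reached from the current subgraph by a further, edge-disjoint path — produces a labelled tree $\tau$ rooted at $v$, assembled from at most $t$ internally disjoint constituent paths, with at most $t$ leaves besides $v$, and all of whose edges are present in $G_n$. Hence
\begin{align*}
	\one\{x_1,\dots,x_t\in\cC_n(v)\}\;\leq\;\sum_{\tau}\,\prod_{\{a,b\}\in E(\tau)}\one\{a\leftrightarrow b\},
\end{align*}
the finite sum running over all such witness trees $\tau$, each described by a tree-shape on at most $2t$ distinguished vertices, the lengths of its constituent paths, the labels of all its vertices, and the positions of the marks $x_1,\dots,x_t$ among them.

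The decisive point is that on $\{v\in V_n^{\max}\}$, whenever all edges of a witness tree $\tau$ are present one has $V(\tau)\subseteq\cC_n(v)$ and hence $W_u\leq W_v$ for every $u\in V(\tau)$; as these are $\cW$-measurable constraints, $\one\{a\leftrightarrow b\}\leq E_n\{a,b\}$ together with the conditional independence of the distinct edges of $\tau$ gives
\begin{align*}
	\one\{v\in V_n^{\max}\}\,\E_\cW\Big[\prod_{\{a,b\}\in E(\tau)}\one\{a\leftrightarrow b\}\Big]\;\leq\;\Big(\prod_{u\in V(\tau)}\one\{W_u\leq W_v\}\Big)\prod_{\{a,b\}\in E(\tau)}\frac{W_aW_b}{L_n}.
\end{align*}
Summing the right-hand side over the vertex labels with the root fixed to $v$, the sum factorises and a non-root vertex of $\tau$-degree $d$ contributes $\sum_{w=1}^{n}W_w^{d}\one\{W_w\leq W_v\}$, which is at most $L_n$ for $d=1$, at most $L_n^{(2)}$ for $d=2$, and at most $W_v^{d-2}L_n^{(2)}$ for $d\geq 3$ — this last bound is exactly where the maximal-weight indicator is used, since it lets us avoid the higher moments $L_n^{(j)}$, $j\geq 3$, which otherwise enter expressions of this type (compare Lemma \ref{Lem:variance_bound_Tnv}). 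Pairing these $|V(\tau)|-1$ factors against the $|V(\tau)|-1$ factors $1/L_n$, what survives is a power $(L_n^{(2)}/L_n)^{\#\{u\neq v:\deg_\tau u\geq 2\}}$ together with a power $W_v^{\,\deg_\tau v+\sum_{u\neq v,\,\deg_\tau u\geq 3}(\deg_\tau u-2)}$ of $W_v$; by the handshake identity $\sum_{u:\deg_\tau u\geq 3}(\deg_\tau u-2)=\#\{\text{leaves of }\tau\}-2$ and the bound of $t$ on the number of leaves besides $v$, the latter exponent is at most $t$, while distributing the $(L_n^{(2)}/L_n)$-factors along the constituent paths turns a length-$\ell$ segment into a factor $(L_n^{(2)}/L_n)^{\ell-1}$.

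It remains to sum over all witness trees. Parameterising $\tau$ by the lengths $k_1,\dots,k_t\in\{0,\dots,n\}$ of its constituent paths, the remaining discrete choices — the tree-shape and, at each of the at most $t$ construction steps, one attachment point or one identification of a mark with a present vertex — number at most $c_t'\prod_{i=1}^{t}(k_i+3)^{t}$ for a constant $c_t'$, since at every stage at most $1+\sum_j k_j\leq\prod_j(k_j+3)$ vertices are present; summing over $k_1,\dots,k_t$ then factorises and produces the displayed bound. The conceptual core — the surjection reduction, and above all the observation that on $\{v\in V_n^{\max}\}$ every weight occurring in a connecting tree is at most $W_v$, so that no moment beyond $L_n^{(2)}$ ever appears — is short; the main obstacle is the bookkeeping of the last two steps: defining the witness trees so that their edges are genuinely distinct (needed for conditional independence), and checking that the number of discrete choices, together with the off-by-one shifts coming from the $(\cdot)^{\ell-1}$-factors and from zero-length constituent paths, is absorbed by the generous powers $(k_i+3)^{t}$ and the factorials in $R_{n,m}$, so that the multiple sum collapses to the $t$-th power of a single series and, summed over $t\leq m$, reproduces exactly $R_{n,m}\sum_{s=1}^{m}W_v^{s}$.
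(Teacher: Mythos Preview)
Your approach shares the paper's core idea: reduce $S_n(v)^m$ to a sum over $t$-tuples of distinct vertices in $\cC_n(v)$ via the surjection bound with factor $m^m$, then for each tuple build a connecting tree rooted at $v$ and exploit $v\in V_n^{\max}$ to replace every troublesome vertex weight by $W_v$, so that only $L_n$ and $L_n^{(2)}$ appear. The paper's execution is cleaner than yours in one respect: it first orders $x_1,\dots,x_t$ by graph distance from $v$ (this is exactly where the $t!$ arises) and then attaches each $x_i$ by a segment of a shortest $v$--$x_i$ path. The ordering guarantees that each new segment is non-trivial and meets the existing tree only at its start $a_i$; bounding merely $W_{a_i}\leq W_v$ at each of the $t$ attachments gives $W_v^{t}$ directly, with no degree casework and no handshake identity, and the count of attachment points is simply $\prod_{j<i}(k_j+1)$.

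Your general witness-tree route is sound in principle and the degree/handshake computation is correct, but the final assertion that the constant $c_t$ --- counting reduced tree shapes together with all attachment and marking choices --- is absorbed by the $t!$ in $R_{n,m}$ is left unverified. Since rooted tree shapes on $O(t)$ vertices can number $t^{\Theta(t)}$, and since you also allow zero-length constituent paths (which introduce stray factors $3^{t}$ without any compensating $(L_n^{(2)}/L_n)$-factor), this bookkeeping is not automatic. The paper's distance-ordering sidesteps all of it: there are always exactly $t$ non-trivial segments, the only combinatorial choice at step $i$ is the single attachment vertex, and the stated $R_{n,m}$ drops out without further estimation. If you want to reproduce the specific $R_{n,m}$ of the lemma rather than a qualitatively equivalent bound, adopting that ordering is the simplest fix.
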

\begin{proof}
	To simplify notation we assume without loss of generality that $v=n$. For $m\in\N$, the almost sure convergence of $R_{n,m}$ as $n\to\infty$ follows from \eqref{conv:rel_Sw}. For the moment bound, recall $v\notin\cX_n(v)$ by (A2') so that
	\begin{align*}	
		&\E_\mathcal{W}\left[\one\{v\in V^{\max}_{n}\}S_n(v)^m\right]\leq \E_\mathcal{W}\bigg[\one\{v\in V^{\max}_{n}\}\bigg(\sum_{x\in[n-1]}\one\{x\in\cC_n(v)\}\bigg)^m\bigg]\\
		&=\sum_{(x_1,\dots,x_m)\in[n-1]^m}\E_\mathcal{W}\bigg[\one\{v\in V^{\max}_{n}\}\prod_{i=1}^m\one\{x_i\in\cC_n(v)\}\bigg]\\
		&\leq \sum_{t=1}^mm^m\sum_{(x_1,\dots,x_t)\in[n-1]_{\neq}^t}\E_\mathcal{W}\bigg[\one\{v\in V^{\max}_{n}\}\prod_{i=1}^t\one\{x_i\in\cC_n(v)\}\bigg],
	\end{align*}
	where the last step accounts for equal entries of a vector $(x_1,\dots,x_m)\in[n]^m$ by the additional factor $m^m$ and instead only sums over the $t\in[m]$ distinct entries of $x$. Hence it suffices to show for all $t\in[m]$ that 
	\begin{align*}
		\sum_{(x_1,\dots,x_t)\in[n-1]^t_{\neq}}\hspace{-9pt}\E_\mathcal{W}\bigg[\one\{v\in V^{\max}_{n}\}\prod_{i=1}^t\one\{x_i\in\cC_n(v)\}\bigg]\leq W_v^tt!\bigg(\sum_{k=0}^n\bigg(\frac{L_n^{(2)}}{L_n}\bigg)^k(k+3)^t\bigg)^t.
	\end{align*}
	We order the vertices $x_1,\dots,x_t$ in such a way that the graph distance of $x_i$ and $v$ is non-decreasing in $i$ which gives us the $t!$ on the right-hand side of the inequality above.
	
	We apply an iterative argument to bound the product on the left-hand side above. If $x_1,\dots,x_t\in\cC_n(v)$, there exists a shortest path $P_1$ starting in $v$ and ending in $x_1$, consisting of $k_1\geq 2$ vertices because $x_1\neq v$. There may be several shortest paths and if so, we always choose the smallest one with respect to the lexicographic order of its vertices. Now consider a shortest path from $v$ to $x_2$ and remove the part leading from $v$ to its last intersection with the already existing path $P_1$. The remaining part, connecting $P_1$ and $x_2$, is called $P_2$ and contains $k_2\geq2$ vertices. Applying this construction iteratively, we connect for $i=2,\dots,t$ a vertex $x_i$ via a path $P_i$ of $k_i\geq 2$ vertices to some vertex $a_i$ of the previously added paths $P_1,\dots,P_{i-1}$. Note that we obtain a tree structure without cycles by choosing the shortest path by lexicographic order (if necessary).

	For $i\in[t]$ and any of these paths $P_i$, write $a_i$ for the starting vertex of $P_i$ and recall that the weight $W_v$ is maximal in $\cC_n(v)$. Note that $a_1=n$. For $i\geq 2$, there are at most $k_1+\hdots+k_{i-1}\leq \prod_{j=1}^{{i-1}}(k_j+1)$ possibilities to choose the point $a_i$ on any of the paths $P_1,\dots,P_{i-1}$ where $P_i$ may be attached to. We write $\cup_{j=1}^{i-1}P_j$ for the set of all vertices of $P_1,\dots,P_{i-1}$. Once one has chosen any such a starting vertex $a_i$, the conditional expectation of the number of such paths $P_i$ is bounded by
	\begin{align*}
		&\E_\cW\bigg[\sum_{k_i=2}^n|\cP^{(n)}_{k_i}(a_i,x_i,\cup_{j=1}^{i-1}P_j)|\cdot \one\{W_{a_i}\leq W_v\}\bigg]\leq W_{v}\frac{W_{x_i}}{L_n}\sum_{k_i=2}^n\bigg(\frac{L_n^{(2)}}{L_n}\bigg)^{k_i-2},
	\end{align*}
	where we used Lemma \ref{Lem:paths} and the fact that the weight of $W_v$ is maximal in its component, so in particular not smaller than the weight $W_{a_i}$. By construction the paths $P_1,\dots,P_t$ do not share an edge and are therefore conditionally independent.

	Combining this bound with the number of possible choices for $a_i$ we obtain
	\begin{align*}
		&\sum_{(x_1,\dots,x_t)\in[n-1]^t_{\neq}}\hspace{-5pt}\E_\mathcal{W}\bigg[\one\{v\in V^{\max}_{n}\}\prod_{i=1}^t\one\{x_i\in\cC_n(v)\}\bigg]\\
		&\leq W_v^tt!\hspace{-5pt}\sum_{(x_1,\dots,x_t)\in[n-1]^t_{\neq}}\prod_{i=1}^t\frac{W_{x_i}}{L_n}\sum_{k_i=2}^n \bigg(\frac{L_n^{(2)}}{L_n}\bigg)^{k_i-2}(k_i+1)^{t-i}\leq W_v^tt!\bigg(\sum_{k=0}^\infty\bigg(\frac{L_n^{(2)}}{L_n}\bigg)^k(k+3)^t\bigg)^t,
	\end{align*}
	which concludes the proof.
\end{proof}
Now we have collected all auxiliary lemmas and proceed with the proof of the main theorem.
\begin{proof}[Proof of Theorem \ref{Thm:conv_to_PP}]
	For $n\in\N$ we compare the point processes
	\begin{align*}
		\Theta_n=\sum_{v=1}^n\delta_{W_vq(n)^{-1}}\quad \text{and}\quad \Xi_n= \sum_{v=1}^n\mathbf{1}\{v\in V^{\max}_{n}\}\delta_{S_n(v)q(n)^{-1}\xi^{-1}}.
	\end{align*}	
	For $n\to\infty$, the convergence of $\Theta_n$, the point process of the rescaled weights, to the Poisson process $\eta_\beta$ in $M_p((0,\infty])$ has been established in Lemma 3.6 in \cite{Matthias_Chinmoy}. We show that $\Xi_n$ behaves asymptotically like $\Theta_n$. To be more precise, due to the proof of Theorem 2.1 in \cite{Matthias_Chinmoy} it suffices to show for all $a>0$ that
	\begin{align}
		\Xi_n((a,\infty])-\Theta_n((a,\infty])\overset{\p}{\longrightarrow}0\quad\text{as}\quad n\to\infty, \label{eq:pp_conv_on_a_infty}
	\end{align}
	in order to conclude our main theorem. 	Since $q(n)\to\infty$ as $n\to\infty$, it does not matter whether we consider $S_n(v)$ or $S_n(v)-1$ in $\Xi_n$. Therefore, we may assume without loss of generality that (A2') is satisfied, i.e.\ that $v\not\in\cX_n(v)$ for all $v\in[n]$. 	 It holds that 
	\begin{align*}
		|\Xi_n((a,\infty])-\Theta_n((a,\infty])|&\leq \sum_{v=1}^n\mathbf{1}\{W_v>aq(n)\}\mathbf{1}\{v\notin V^{\max}_{n}\}\\
		&\quad+\sum_{v=1}^n\mathbf{1}\{W_v>aq(n)\}\mathbf{1}\{v\in V^{\max}_{n},S_n(v)\leq aq(n)\xi\}\\
		&\quad+\sum_{v=1}^n\mathbf{1}\{W_v\leq aq(n)\}\mathbf{1}\{v\in V^{\max}_{n},S_n(v)>aq(n)\xi\}\eqqcolon I_1+I_2+I_3.
	\end{align*}
	We show that $I_1,I_2$ and $I_3$ converge to zero in probability.  For $I_1$ it follows from Lemma \ref{Lem:prob_of_sets_goes_to_zero} that
	\begin{align*}
		\p\left(I_1\neq0\right)= \p\left(\exists x,y \in[n]\colon x\neq y,W_x\geq W_y\geq aq(n),x\in\cC_n(y)\right)=\p(\cA_n^c)\to0,
	\end{align*}	
	as $n\to\infty$. We continue with decomposing $I_2$ and $I_3$. To this end, let $\varepsilon\in(0,a)$. It holds that
	\begin{align*}
		&I_2=\sum_{v=1}^n\mathbf{1}\{v\in V^{\max}_{n},S_n(v)\leq aq(n)\xi<W_v\xi\}\\
		&\leq\sum_{v=1}^n\mathbf{1}\{aq(n)<W_v\leq (a+\varepsilon)q(n)\}+\sum_{v=1}^n\mathbf{1}\{W_v>(a+\varepsilon)q(n),S_n(v)\leq aq(n)\xi\}\eqqcolon I_{2,1}+I_{2,2}.		
	\end{align*}
	For $I_3$ we fix some small positive $\gamma$ satisfying
	\begin{align*}
		0<\gamma<\beta^{-1} \quad \text{and define}\quad \tilde{q}(n)= n^{-\gamma}q(n)\quad \text{for }n\in\N.
	\end{align*}
	Note that $\tilde{q}(n)\leq q(n)$, $\tilde{q}(n)q(n)^{-1}\rightarrow0$ and, by \eqref{q(n):scaling}, $\tilde{q}(n)\to\infty$ as $n\rightarrow\infty$. For $n\in\N$ we have
	\begin{align*}
		I_3&= \sum_{v=1}^n\mathbf{1}\{v\in V^{\max}_{n},W_v\xi\leq aq(n)\xi<S_n(v)\}\\
		&\leq\sum_{v=1}^n\mathbf{1}\{(a-\varepsilon)q(n)<W_v\leq aq(n)\}+\sum_{v=1}^n\mathbf{1}\{a\tilde{q}(n)<W_v\leq (a-\varepsilon)q(n),S_n(v)>aq(n)\xi\}\\
		&\quad+\sum_{v=1}^n\mathbf{1}\{v\in V^{\max}_{n},W_v\leq a\tilde{q}(n),S_n(v)>aq(n)\xi\}\eqqcolon I_{3,1}+I_{3,2}+I_{3,3}.
	\end{align*}

	By Theorem 3.6 and Remark 3.3(a) in \cite{Resnick2007} we get
	\begin{align*}
		&\lim_{n\to\infty}\E\bigg[\sum_{v=1}^n\mathbf{1}\{(a-\varepsilon)q(n)<W_v\leq (a+\varepsilon)q(n)\}\bigg]\\
		&=\lim_{n\to\infty}n\p((a-\varepsilon)q(n)<W\leq (a+\varepsilon) q(n))=(a-\varepsilon)^{-\beta}-(a+\varepsilon)^{-\beta},
	\end{align*}
	so that $\lim_{\varepsilon\rightarrow0}\limsup_{n\rightarrow\infty}\E\left[I_{2,1}\right]=0$ and $\lim_{\varepsilon\rightarrow0}\limsup_{n\rightarrow\infty}\E\left[I_{3,1}\right]=0$.
	
	We easily see that
	\begin{align*}
		I_4= \sum_{v=1}^n\mathbf{1}\left\{a\tilde{q}(n)<W_v,|S_n(v)-W_v\xi|>\varepsilon q(n)\xi\right\}
	\end{align*}
	is an upper bound for $I_{2,2}$ and $I_{3,2}$. It remains to show that $I_{3,3}\overset{\p}{\longrightarrow}0$ and $I_{4}\overset{\p}{\longrightarrow}$ as $n\to\infty$. Note that both these random variables take values in $\N_0$. For any sequence of $\N_0$-valued random variables $(\zeta_n)_{n\in\N}$ and any sequence of events $(Q_n)_{n\in\N}$ with $\p(Q_n)\to 1$ as $n\to\infty$ it holds that 
	\begin{align}
		\p(\zeta_n\neq 0)&=\p(\one_{Q_n^c}\zeta_n\neq0)+\p(\one_{Q_n}\zeta_n\neq 0)\leq \p(Q_n^c)+\E\left[\one\{\one_{Q_n}\zeta_n\neq 0\}\right]\nonumber\\
		&=\p(Q_n^c)+\E\left[\min\left(1,\one_{Q_n}\zeta_n\right)\right]=\p(Q_n^c)+\E\left[\E_\mathcal{W}\left[\min\left(1,\one_{Q_n}\zeta_n\right)\right]\right]\nonumber\\
		&\leq \p(Q_n^c)+ \E\left[\min\left(1,\E_\mathcal{W}\left[\one_{Q_n}\zeta_n\right]\right)\right],\label{eq:cond_exp_conv_to_zero}
	\end{align}
	where we used Jensen's inequality in the last step.	Thus, for $\zeta_n\overset{\p}{\longrightarrow}0$ as $n\to\infty$ it suffices to show $\E_\mathcal{W}[\one_{Q_n}\zeta_n]\overset{\p}{\longrightarrow}0$ as $n\to\infty$.

	We start with the summand $I_{3,3}$. As discussed above it suffices to show that $\E_\cW[I_{3,3}]\overset{\p}{\longrightarrow}0$ as $n\to\infty$. For $m\in\N$ we compute
	\begin{align*}
		\E_\mathcal{W}[I_{3,3}]&=\E_\mathcal{W}\bigg[\sum_{v=1}^n\mathbf{1}\{v\in V^{\max}_{n},W_v\leq a\tilde{q}(n),S_n(v)>aq(n)\xi\}\bigg]\\
		&=\sum_{v=1}^n\one\{W_v\leq a\tilde{q}(n)\}\p_\mathcal{W}\bigg(\one\{v\in V^{\max}_{n}\}\cdot S_n(v)>aq(n)\xi\bigg)\\
		&\leq \sum_{v=1}^n\one\{W_v\leq a\tilde{q}(n)\}\frac{\E_\mathcal{W}\left[\one\{v\in V^{\max}_{n}\}\cdot S_n(v)^m\right]}{\left(aq(n)\xi\right)^m},
	\end{align*}
	where the last inequality follows from the Markov inequality. Let $n$ be large enough such that $a\tilde{q}(n)>1$. We can bound the conditional expectation by applying Lemma \ref{Lem:bound_mth_moment_of_Tnv} which leads to
	\begin{align*}
		\E_\mathcal{W}[I_{3,3}]&\leq\left(\frac{1}{aq(n)\xi}\right)^m\sum_{v=1}^n\one\{W_v\leq a\tilde{q}(n)\}R_{n,m}\sum_{t=1}^mW_v^t\\
		&\leq R_{n,m}\left(\frac{1}{aq(n)\xi}\right)^m\sum_{v=1}^nm(a\tilde{q}(n))^m=m\xi^{-m}R_{n,m}\cdot n^{1-m\gamma},
	\end{align*}
	where the last step uses $\tilde{q}(n)=n^{-\gamma}q(n)$. Since $R_{n,m}$ converges almost surely to a constant by Lemma \ref{Lem:bound_mth_moment_of_Tnv}, choosing $m>\gamma^{-1}$ ensures that $\E_\cW[I_{3,3}]$ converges almost surely to zero.

	In order to deal with $I_{4}$ we define  the event
	\begin{align*}
		\cG_{n,\varepsilon}&= \left\{\underset{v=1,\dots,n}{\sup}\left|\E_\cW[S_n(v)]-W_{v}\xi\right|\leq\frac{\varepsilon q(n)\xi}{2}\right\}
	\end{align*}
	which satisfies $\p(\cG_{n,\varepsilon})\to1$ as $n\to\infty$ due to (A1). By the discussion after \eqref{eq:cond_exp_conv_to_zero} we are left to show that $\E_\cW[\one_{ \cG_{n,\varepsilon}}I_4]\overset{\p}{\longrightarrow}0$ as $n\to\infty$. We use the $\cW$-measurability of $\cG_{n,\varepsilon}$ to compute  
	\begin{align*}
		&\E_\cW[\one_{ \cG_{n,\varepsilon}}I_4]=\mathbf{1}_{\cG_{n,\varepsilon}}\sum_{v=1}^n\p_\mathcal{W}\left(W_v>a\tilde{q}(n),\left|S_n(v)-W_v\xi\right|>\varepsilon q(n)\xi\right)\\
		&\leq\sum_{v=1}^n\one\{W_v>a\tilde{q}(n)\}\p_\mathcal{W}\left(\left|\E_\cW[S_n(v)]-W_{v}\xi\right|\leq\frac{\varepsilon q(n)\xi}{2},\left|S_n(v)-W_v\xi\right|>\varepsilon q(n)\xi\right)\\
		&\leq \sum_{v=1}^n\one\{W_v>a\tilde{q}(n)\}\p_\mathcal{W}\left(\left|S_n(v)-\E_\mathcal{W}[S_n(v)]\right|>\frac{\varepsilon q(n)\xi}{2}\right).
	\end{align*}
	We use the Chebyshev inequality and Lemma \ref{Lem:variance_bound_Tnv} to bound this further by 
	\begin{align*}
		&\E_\cW[\one_{ \cG_{n,\varepsilon}}I_{4}]\leq \sum_{v=1}^n\one\{W_v>a\tilde{q}(n)\}\frac{\Var_\mathcal{W}\left(S_n(v)\right)}{\left(\frac{\varepsilon q(n)\xi}{2}\right)^2}\\
		&\leq \sum_{v=1}^n\one\{W_v>a\tilde{q}(n)\}\frac{W_vX_n+W_vn^{-1}L_n^{(3)}Y_n}{\left(\frac{\varepsilon q(n)\xi}{2}\right)^2}\\
		&=\frac{4}{(\varepsilon\xi)^2}X_n\sum_{v=1}^n\one\{W_v>a\tilde{q}(n)\}\frac{W_v}{q(n)^2}+\frac{4}{(\varepsilon\xi)^2}Y_n\sum_{v=1}^n\one\{W_v>a\tilde{q}(n)\}\frac{W_vL_n^{(3)}}{nq(n)^2}\\
		&\eqqcolon \frac{4}{(\varepsilon\xi)^2}(X_nI_X+Y_nI_Y).
	\end{align*}
	From Lemma \ref{Lem:variance_bound_Tnv} we know that $X_n$ and $Y_n$ converge in probability to positive constants as $n\to\infty$. By Slutsky's theorem it suffices to show that $I_X+I_Y\overset{\p}{\longrightarrow}0$ as $n\to\infty$.
	
	If $\beta\in(2,3]$, we choose some $\tau\in(0,\beta)$. We define
	\[
	p = \left\{\begin{array}{ll}
		1, &\text{for } \beta>3,\\
		\frac{\beta-\tau}{3}, &\text{for } \beta\in(2,3].
	\end{array}\right.
	\]
	Since then $\E[W^{3p}]<\infty$, it follows from the Marcinkiewicz-Zygmund strong law of large numbers, see e.g.\ Theorem 5.23 in \cite{Kallenberg2021}, that
	\[
	\frac{L_n^{(3)}}{n^{1/p}} \overset{a.s.}{\longrightarrow} \left\{\begin{array}{ll}
		\E[W^3], &\text{for } \beta>3,\\
		0, &\text{for } \beta\in(2,3],
	\end{array}\right. \quad \text{as}\quad n\to\infty.
	\]
	Together with Slutsky's theorem, we see that
	\begin{align*}
		I_X+I_Y\overset{\p}{\longrightarrow}0\quad\text{as}\quad n\to\infty
	\end{align*}
	if
	\begin{align*}
		\frac{1+n^{1/p-1}}{q(n)^2}\sum_{v=1}^n\one\{W_v>a\tilde{q}(n)\}W_v\overset{\p}{\longrightarrow}0\quad\text{as}\quad n\to\infty.
	\end{align*}
	In the following we prove this by showing that its expectation
	\begin{align*}
		h(n)=\frac{n+n^{1/p}}{q(n)^2}\E[\one\{W>a\tilde{q}(n)\}W] 
	\end{align*}
	vanishes for $n\to\infty$.
	For a function $g\colon(0,\infty)\to[0,\infty)$ and $\rho\in\R$ we write $g\in\rm{RV}_\rho$ if $g$ is regularly varying with index $\rho$ at infinity, i.e.\ if there exists a slowly varying function $L\colon(0,\infty)\to[0,\infty)$ such that $g(t)=L(t)t^\rho$ for all $t\in(0,\infty)$.
	
	From Lemma 3.3 in \cite{Matthias_Chinmoy} we know that $q\in\rm{RV}_{1/\beta}$, which implies $\tilde{q}\in\rm{RV}_{1/\beta-\gamma}$. For $u>0$ we have
	\begin{align*}
		\E[\one\{W>u\}W]=\int_0^\infty \p(\one\{W>u\}W>t)dt= u\p(W>u)+\int_{u}^\infty \p(W>t)dt.
	\end{align*}
	Since, by assumption (W), $u\mapsto\p(W>u)$ belongs to $\rm{RV}_{-\beta}$, $u\mapsto u\p(W>u)$ is from $\rm{RV}_{1-\beta}$ and, by Karamata's theorem, see Theorem 2.1 in \cite{Resnick2007}, we derive that $u\mapsto \int_u^\infty\p(W>t)dt$ is also from $\rm{RV}_{1-\beta}$. Together with $\tilde{q}\in \rm{RV}_{1/\beta-\gamma}$ and Proposition 2.6 in \cite{Resnick2007}, we obtain that
	\begin{align*}
		n\mapsto\E[\one\{W>a\tilde{q}(n)\}W]
	\end{align*}
	belongs to $\rm{RV}_{(1-\beta)(1/\beta-\gamma)}$. As $1/p\geq 1$ and $q\in\rm{RV}_{1/\beta}$, we have $h\in{\rm{RV}}_{(1-\beta)(1/\beta-\gamma)+1/p-2/\beta}$. 
	
	If $\beta>3$,
	\begin{align*}
		(1-\beta)(1/\beta-\gamma)+1/p-2/\beta=-\frac{1+\beta}{\beta}+1+\gamma(\beta-1),
	\end{align*}
	while for $\beta\in(2,3]$,
	\begin{align*}
		(1-\beta)(1/\beta-\gamma)+1/p-2/\beta=-\frac{1+\beta}{\beta}+\frac{3}{\beta-\tau}+\gamma(\beta-1).
	\end{align*}
	Now we can choose $\gamma$ and $\tau$ sufficiently small so that the expressions become negative for both cases. Then, we have $h(n)\to0$ as $n\to\infty$, which implies
	\begin{align*}
		\E_\cW[\one_{ \cG_{n,\varepsilon}}I_4]\overset{\p}{\longrightarrow}0 \quad \text{and} \quad I_4\overset{\p}{\longrightarrow}0 \quad \text{as}\quad n\to\infty.
	\end{align*}
	This concludes the proof. 
\end{proof}

\subsection{Proof of Theorem \ref{Lem:assump_for_different_vertices}}

It is clear that all four classes of vertices in Theorem \ref{Lem:assump_for_different_vertices} satisfy condition (A2). The idea to prove (A1) is to explore the component $\cC_n(v)$ starting from the vertex $v$.  We start with a lemma for the connection probabilities.

\begin{lemma}\label{lem:prob_bound_connection}
	For $x,y\in[n]$,
	\begin{align*}
		\frac{W_xW_y}{L_n}\bigg(1-\min\bigg(1,\frac{W_{(n)}^2}{L_n}\bigg)\bigg)\leq \p_\cW(x\leftrightarrow y)\leq \frac{W_xW_y}{L_n}.
	\end{align*}
\end{lemma}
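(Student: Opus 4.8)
The upper bound is immediate: conditionally on $\cW$, the number of edges between $x$ and $y$ is $\mathrm{Poisson}(W_xW_y/L_n)$, so $\p_\cW(x\leftrightarrow y)=1-\exp(-W_xW_y/L_n)\le W_xW_y/L_n$ by the elementary inequality $1-e^{-t}\le t$ for $t\ge 0$. So the real content is the lower bound, and the natural tool is the complementary inequality $1-e^{-t}\ge t-t^2/2\ge t(1-t/2)$, valid for $t\ge 0$. With $t=W_xW_y/L_n$ this already gives
\[
\p_\cW(x\leftrightarrow y)\ge \frac{W_xW_y}{L_n}\Bigl(1-\frac{W_xW_y}{2L_n}\Bigr),
\]
so it remains only to control the correction term $W_xW_y/(2L_n)$ by the quantity $\min(1,W_{(n)}^2/L_n)$ appearing in the statement.

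For that final step I would bound $W_xW_y\le W_{(n)}^2$ since both $W_x$ and $W_y$ are among $W_1,\dots,W_n$, giving $W_xW_y/(2L_n)\le W_{(n)}^2/(2L_n)\le W_{(n)}^2/L_n$; and separately the bracket $(1-W_xW_y/(2L_n))$ is trivially at least $0=1-1$, which one can also write as $1-\min(1,\cdot)$ when the min equals $1$. Combining these two cases — i.e. using $W_xW_y/(2L_n)\le \min(1,W_{(n)}^2/L_n)$, which holds because the left side is at most $1/2<1$ and at most $W_{(n)}^2/L_n$ — yields $\p_\cW(x\leftrightarrow y)\ge \frac{W_xW_y}{L_n}\bigl(1-\min(1,W_{(n)}^2/L_n)\bigr)$, as claimed. (If one wants to be slightly careful about the case $x=y$, the bound $W_xW_y=W_x^2\le W_{(n)}^2$ still holds, so nothing changes.)

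There is essentially no obstacle here: the lemma is a one-line consequence of the two-sided bounds $t(1-t/2)\le 1-e^{-t}\le t$ together with $W_xW_y\le W_{(n)}^2$. The only mild subtlety is getting the constant in the correction term to match the $\min(1,W_{(n)}^2/L_n)$ form rather than $W_{(n)}^2/(2L_n)$; this is harmless since the stated bound is weaker, and writing it this way is convenient later because $\min(1,W_{(n)}^2/L_n)\to 0$ in probability follows directly from \eqref{eq:Wn_Ln}. I would present the whole argument in two or three displayed lines.
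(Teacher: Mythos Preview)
Your approach is essentially the paper's: both use the two-sided Taylor bounds $t-t^2/2\le 1-e^{-t}\le t$ together with $W_xW_y\le W_{(n)}^2$, and both dispose of the case where the $\min$ equals $1$ by invoking $\p_\cW(x\leftrightarrow y)\ge 0$.

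There is one small slip in your justification of that last case. You write that the bracket $(1-W_xW_y/(2L_n))$ is ``trivially at least $0$'' and that $W_xW_y/(2L_n)\le 1/2$; neither is true in general, since $W_xW_y\le L_n$ can fail (e.g.\ $n=2$, $W_1=W_2=10$ gives $W_1W_2=100>20=L_2$). Consequently your combined inequality $W_xW_y/(2L_n)\le\min(1,W_{(n)}^2/L_n)$ can also fail. The repair is exactly what the paper does: when $W_{(n)}^2/L_n\ge 1$ the claimed lower bound is $0$, and this follows directly from $\p_\cW(x\leftrightarrow y)\ge 0$, not from any positivity of the bracket. With that correction your write-up is fine.
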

\begin{proof}
	By a Taylor expansion we have 
	\begin{align*}
		\p_\cW(x\leftrightarrow y)=1-\exp\bigg(-\frac{W_xW_y}{L_n}\bigg)=\frac{W_xW_y}{L_n}-\frac{e^{-z}}{2}\bigg(\frac{W_xW_y}{L_n}\bigg)^2
	\end{align*}
	for some $z>0$. With
	\begin{align*}
		0\leq \frac{e^{-z}}{2}\bigg(\frac{W_xW_y}{L_n}\bigg)^2\leq \frac{W_xW_y}{L_n}\frac{W_{(n)}^2}{L_n}
	\end{align*}
	and $\p_\cW(x\leftrightarrow y)\geq 0$, the claim follows.
\end{proof}
For a vertex $v_0\in[n]$ we recall from \eqref{eq:T_nv}  the random variable  
\begin{align*}
	T_n(v_0)=\sum_{k=1}^n\sum_{(v_1,\dots,v_k)\in([n]\setminus\{v_0\})^k_{\neq}}\prod_{i=1}^k\one\{v_i\leftrightarrow v_{i-1}\}\one\{v_k\in\cX_n(v_0)\},
\end{align*}
which checks for all possible paths $v_0\dots v_k$ whether they exist or not and if the endpoint belongs to $\cX_n(v_0)$. If the component of $v_0$ is a tree, i.e.\ it has no cycles, then there is a unique path between any two of its vertices so that $T_n(v_0)$ equals $S_n(v_0)$ -- if $v_0\notin\cX_n(v_0)$. However, if there is a cycle, it is possible that $T_n(v_0)$ counts some vertices more than once. We define the event
\begin{align*}
	\cB_{n}(v_0)&=\left\{\exists k\geq 3,(s_1,\dots,s_k)\in\cC_n(v_0)^{k}_{\neq}\colon s_1\leftrightarrow\hdots \leftrightarrow s_k\leftrightarrow s_1\right\}^c,
\end{align*}
which means that there are no cycles in $v_0$'s component, and the random variable
\begin{align*}
	\overline{T}_n(v_0)= 	\sum_{k=1}^n\sum_{(v_1,\dots,v_k)\in([n]\setminus\{v_0\})^k_{\neq}}\prod_{i=1}^k\one\{v_i\leftrightarrow v_{i-1}\},
\end{align*}
which is an upper bound for $T_n(v_0)$. The following lemma essentially shows that cycles are unlikely.
\begin{lemma}\label{Lem:indicator_no_cycles_hardly_changes_Tnv}
	Assume $\mathrm{(W)}$. For $n\in\N$ and $v_0\in[n]$ we have
	\begin{align*}
		\E_\mathcal{W}\left[\one_{\cB_{n}(v_0)^c}\overline{T}_{n}(v_0)\right]\leq W_{v_0}\frac{W_{(n)}^2}{L_n}U_n,
		\text{ where } U_n=2\bigg(\sum_{k=0}^\infty (k+2)^2\bigg(\frac{L_n^{(2)}}{L_n}\bigg)^k\bigg)^2.
	\end{align*}
\end{lemma}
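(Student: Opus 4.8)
The plan is to bound $\E_\mathcal{W}[\one_{\cB_n(v_0)^c}\overline{T}_n(v_0)]$ by decomposing over the ``reason'' for the cycle. If $\cB_n(v_0)^c$ occurs, there is a cycle $s_1\leftrightarrow\dots\leftrightarrow s_k\leftrightarrow s_1$ in $\cC_n(v_0)$, and moreover $\overline{T}_n(v_0)$ counts, for each endpoint $v_\ell$ reached from $v_0$, a path $v_0\dots v_\ell$. The key combinatorial step is: on the event $\cB_n(v_0)^c$, every path $P=v_0 v_1\dots v_\ell$ counted by $\overline{T}_n(v_0)$ can be ``augmented'' by a cycle somewhere in the component. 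Concretely, I would fix a path $P$ from $v_0$ to some $v_\ell$ together with a cycle $C$ present in the component, and observe that the cycle $C$ is itself connected to $v_0$, hence connected to the path $P$; so the whole configuration is described by the path $P$, plus a path $P'$ from some vertex of $P$ to some vertex of $C$, plus the cycle $C$ — with all of $P$, $P'$, $C$ edge-disjoint (after possibly truncating $P'$ at its first intersection with $C$, and choosing $P'$ to leave $P$ at its last intersection). This is exactly the kind of edge-disjoint path/cycle decomposition used in the proof of Lemma \ref{Lem:variance_bound_Tnv}.

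The second step is to estimate the expectation of the number of such configurations using Lemma \ref{Lem:paths} and conditional independence across edge-disjoint subgraphs. The path $P$ of $k_1\geq 1$ edges from $v_0$ contributes at most $W_{v_0}(L_n^{(2)}/L_n)^{k_1-1}$ (by the second bound in Lemma \ref{Lem:paths}, summed over endpoints), the connecting path $P'$ of $k_2\geq 0$ edges leaving one of the $\leq k_1+k_2+\dots$ available vertices contributes a factor of roughly $(L_n^{(2)}/L_n)^{k_2}$ times a weight, and the cycle $C$ on $k_3\geq 3$ vertices contributes a factor $\frac{W_{(n)}^2}{L_n}(L_n^{(2)}/L_n)^{k_3-2}$ — here is where the decisive $W_{(n)}^2/L_n$ factor enters, because closing up a cycle forces one extra ``return'' edge whose both endpoints already carry a weight, so one pays $W_xW_y/L_n \le W_{(n)}^2/L_n$ with no compensating weight left over. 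The polynomial factors counting the number of ways to attach $P'$ to $P$ and to a vertex of $C$, and the number of choices of endpoint, are all bounded by powers like $(k_1+k_2+k_3)^c$, which after multiplying the three geometric-type series and using $\E[W^2]<\E[W]$ collapse into a bound of the form $W_{v_0}\frac{W_{(n)}^2}{L_n}\cdot(\text{product of two convergent power series of the type \eqref{conv:rel_Sw}})$, matching the stated $U_n = 2\big(\sum_{k\ge 0}(k+2)^2 (L_n^{(2)}/L_n)^k\big)^2$ up to carefully choosing the exponents $2$ in $(k+2)^2$ to absorb all the polynomial bookkeeping.

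The main obstacle I expect is the bookkeeping of edge-disjointness so that conditional independence (and hence the product structure of $\E_\cW$) is legitimately applicable: one must argue that the cycle $C$, the connecting path $P'$, and the original path $P$ can be chosen pairwise edge-disjoint, which requires truncating $P'$ at its first return to $C$ and at its last departure from $P$, and also handling the degenerate cases where $P'$ has length $0$ (the cycle hangs directly off $P$, or even off $v_0$) or where a vertex of $P$ already lies on $C$. A secondary subtlety is making sure the indices in the three geometric series line up so the final constant is genuinely $2(\sum_{k\ge 0}(k+2)^2(L_n^{(2)}/L_n)^k)^2$ and not something larger; this amounts to generously bounding all the polynomial prefactors $(k_i+\text{const})^{\text{small}}$ by $(k_i+2)^2$ and using that a product of two such series dominates the nested triple sum, which is routine once the structure is set up. Finally, the almost-sure convergence of $U_n$ to a finite constant is immediate from \eqref{conv:rel_Sw} together with $\E[W^2]<\E[W]$, exactly as for $S_{n,\cW}$ and $\tilde S_{n,\cW}$.
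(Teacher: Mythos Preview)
Your overall strategy is right and matches the paper's: for each path $P$ counted by $\overline T_n(v_0)$, attach an edge-disjoint witness of a cycle, then use conditional independence and Lemma~\ref{Lem:paths} to extract the factor $W_{v_0}\,W_{(n)}^2/L_n$ times a product of geometric-type series. The paper, however, does not use a single decomposition $P+P'+C$ with $C$ a full cycle; instead it splits into two cases: (1) two vertices $v_p,v_q$ of $P$ are joined by an extra path avoiding the other vertices of $P$ (a ``chord''), and (2) from some $v_p$ on $P$ there is a path $s_1\dots s_\ell$ avoiding $P$ together with an extra edge $s_\ell\leftrightarrow s_q$ for some $q<\ell-1$ (a ``lollipop''). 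Your $P+P'+C$ picture is essentially case (2), with $P'=s_1\dots s_q$ and $C=s_q\dots s_\ell s_q$.

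The gap in your plan is case (1). If the cycle shares \emph{edges} with $P$, no cycle $C$ edge-disjoint from $P$ exists at all, so truncating $P'$ cannot help. Concretely, take $P=v_0v_1v_2$ and add the single extra edge $\{v_0,v_2\}$: the unique cycle in the component is $v_0v_1v_2v_0$, which uses both edges of $P$. You flag ``a vertex of $P$ already lies on $C$'' as a degenerate case, but the real obstruction is shared edges, and it forces the separate chord case. A side benefit of the paper's two-case split is that each case is genuinely a \emph{double} sum (over the length of $P$ and the length of the chord, respectively the lollipop), so each is bounded directly by $\tfrac12 U_n$; the ``product of two series dominates the nested triple sum'' step you anticipate is then unnecessary.
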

\begin{proof}
	Assume without loss of generality that $v_0=n$. Then
	\begin{align*}
		\one_{\cB_n(v_0)^c}\overline{T}_{n}(v_0)= \one_{\cB_n(v_0)^c}\sum_{k=1}^n\sum_{(v_1,\dots,v_k)\in [n-1]^{k}_{\neq}}\prod_{i=1}^k\one\{v_i\leftrightarrow v_{i-1}\}.
	\end{align*}
	For $\one_{\cB_n(v_0)^c}\overline{T}_{n}(v_0)$ to be non-zero, there must be a cycle somewhere in the component of $v_0$ by the definition of $\cB_n(v_0)$. Consider the path $v_0\dots v_k$ currently counted in $\overline{T}_{n}(v_0)$. For the existence of a cycle in the component of $v_0$ we obtain two possible cases:
	\begin{enumerate}
		\item Two vertices of the path, say $v_p$ and $v_q$ for $p< q$, can be connected by a different path $s_1\dots s_\ell$ with $s_1=v_p$ and $s_\ell=v_q$, resulting in a cycle. We may assume that this new path has no further intersections with the originally considered path by shortening it if needed. 
		\item There is a path $s_1\dots s_{\ell}$ with $\ell\geq 3$ starting in $s_1=v_p$ for some $0\leq p\leq k$ such that $s_\ell \leftrightarrow s_q$ for some $q<\ell-1$. We may assume that $s_1\dots s_\ell$ has no intersections other than $s_1$ with $v_0\dots v_k$ by shortening $s_1\dots s_\ell$ or ending up in the first case.
	\end{enumerate}
	The two cases above yield the following upper bound for $\one_{\cB_{n}(v_0)^c}\overline{T}_n(v_0)$, where we use the notation from Lemma \ref{Lem:paths},
	\begin{align*}
		&\sum_{k=1}^n\sum_{(v_1,\dots,v_k)\in [n-1]^{k}_{\neq}}\prod_{i=1}^k\one \{v_i\leftrightarrow v_{i-1}\}\sum_{0\leq p<q\leq k}\sum_{\ell=2}^n|\cP^{(n)}_\ell(v_p,v_q,\{v_0,\dots,v_k\})|\\
		&+ \sum_{k=1}^n\sum_{(v_1,\dots,v_k)\in [n-1]^{k}_{\neq}}\prod_{i=1}^k\one\{v_i\leftrightarrow v_{i-1}\}\sum_{p=0}^k\sum_{\ell=3}^n\sum_{x\in[n]\setminus\{v_0,\dots,v_k\}}\\
		&\quad\times\sum_{(s_1,\dots,s_\ell)\in\cP^{(n)}_\ell(v_p,x,\{v_0,\dots,v_k\})}\sum_{q=1}^{\ell-2}\one\{x\leftrightarrow s_q\}\eqqcolon I_1+I_2.
	\end{align*} Without any shared edges, we can use conditional independence and apply Lemma \ref{lem:prob_bound_connection} and Lemma \ref{Lem:paths}  to obtain 
	\begin{align*}
		\E_\cW[I_1]&\leq\sum_{k=1}^n\sum_{(v_1,\dots,v_k)\in [n-1]^{k}_{\neq}}\prod_{i=1}^k\frac{W_{v_i}W_{v_{i-1}}}{L_n}\sum_{p,q=0}^k\sum_{\ell=2}^n\bigg(\frac{L_n^{(2)}}{L_n}\bigg)^{\ell-2}\frac{W_{(n)}^2}{L_n}\\
		&\leq W_{v_0}\frac{W_{(n)}^2}{L_n}\sum_{k=1}^n\bigg(\frac{L_n^{(2)}}{L_n}\bigg)^{k-1}(k+1)^2\sum_{\ell=2}^n\bigg(\frac{L_n^{(2)}}{L_n}\bigg)^{\ell-2}\leq W_{v_0}\frac{W_{(n)}^2}{L_n}\frac{U_n}{2}.
	\end{align*}
	For the second summand we use
	\begin{align*}
		\E_\cW\bigg[\sum_{q=1}^{\ell-1}\one\{x\leftrightarrow s_q\}\bigg] \leq \ell \frac{W_xW_{(n)}}{L_n}
	\end{align*}
	so that we can bound $\E_\cW[I_2]$ by 
	\begin{align*}
		&\sum_{k=1}^n\sum_{(v_1,\dots,v_k)\in [n-1]^{k}_{\neq}}\prod_{i=1}^k\frac{W_{v_i}W_{v_{i-1}}}{L_n}\sum_{p=0}^k\sum_{\ell=3}^n\sum_{x\in[n]\setminus\{v_0,\dots,v_k\}}\frac{W_{v_p}W_x}{L_n}\bigg(\frac{L_n^{(2)}}{L_n}\bigg)^{\ell-2}\ell\frac{W_xW_{(n)}}{L_n}\\
		&\leq W_{v_0}\frac{W_{(n)}^2}{L_n}\sum_{k=1}^n\bigg(\frac{L_n^{(2)}}{L_n}\bigg)^{k-1}(k+1)\sum_{\ell=3}^n\bigg(\frac{L_n^{(2)}}{L_n}\bigg)^{\ell-2}\ell\frac{\sum_{x=1}^nW_x^2}{L_n}
		\leq W_{v_0}\frac{W_{(n)}^2}{L_n}\frac{U_n}{2},
	\end{align*}
	where the last inequality uses $\sum_{x=1}^nW_x^2=L_n^{(2)}$ and an index shift. Summing up both bounds yields the claim.
\end{proof}	

\begin{lemma}\label{lemma:1}
	Let $\xi>0$. Assume {\rm{(W)}} and that there exists for all $n\in\N,k\in[n]$ and $(v_0,\dots,v_k)\in[n]^{k+1}_{\neq}$ an event $\cI_n(v_0,\dots,v_k)$ such that
	\begin{align}
		& \cI_n(v_0,\dots,v_k)\text{ is conditionally on }\cW \text{ independent of }\{v_0\leftrightarrow v_1\},\hdots,\{v_{k-1}\leftrightarrow v_k\},\label{eq:lem1_as3}\\ 
		&\one_{B_n(v_0)}\one\{v_0\leftrightarrow\hdots\leftrightarrow v_k,v_k\in\cX_n(v_0)\}=\one_{B_n(v_0)}\one\{v_0\leftrightarrow\hdots\leftrightarrow v_k\}\one_{\cI_n(v_0,\dots,v_k)} \label{eq:lem1_as1}
	\end{align}
	and
	\begin{align}
		&\frac{1}{q(n)}\sup_{v_0\in[n]}\bigg|\sum_{k=1}^n\sum_{(v_1,\dots,v_k)\in([n]\setminus\{v_0\})^k_{\neq}}\prod_{i=1}^k\frac{W_{v_i}W_{v_{i-1}}}{L_n}\p_\cW(\cI_n(v_0,\dots,v_k))-W_{v_0}\xi\bigg|\overset{\p}{\longrightarrow}0,\label{eq:lem1_as2}
	\end{align}
	as $n\to\infty$. Then, assumption {\rm{(A1)}} is satisfied for that choice of $\xi$.
\end{lemma}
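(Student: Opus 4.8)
The plan is to derive the statement of assumption (A1), namely that $\frac{1}{q(n)}\sup_{v_0\in[n]}\bigl|\E_\cW[S_n(v_0)]-W_{v_0}\xi\bigr|\overset{\p}{\to}0$, by comparing $S_n(v_0)$ with the path-counting statistic $T_n(v_0)$ from \eqref{eq:T_nv} and then computing $\E_\cW[T_n(v_0)]$ explicitly via hypothesis \eqref{eq:lem1_as2}. First I would recall that on the event $\cB_n(v_0)$ (no cycles in the component of $v_0$) and under (A2'), every vertex of $\cX_n(v_0)$ is reached by a unique path from $v_0$, so that $\one_{\cB_n(v_0)} S_n(v_0)=\one_{\cB_n(v_0)} T_n(v_0)$. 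As in the proof of Theorem \ref{Thm:conv_to_PP}, the shift by $1$ is irrelevant after dividing by $q(n)$, so we may and do assume (A2') holds. Hence
\begin{align*}
	\bigl|S_n(v_0)-T_n(v_0)\bigr| \le \one_{\cB_n(v_0)^c}\bigl(S_n(v_0)+T_n(v_0)\bigr)\le 2\,\one_{\cB_n(v_0)^c}\overline{T}_n(v_0),
\end{align*}
using $S_n(v_0)\le \overline T_n(v_0)$ and $T_n(v_0)\le\overline T_n(v_0)$, so Lemma \ref{Lem:indicator_no_cycles_hardly_changes_Tnv} gives $\E_\cW|S_n(v_0)-T_n(v_0)|\le 2W_{v_0}\frac{W_{(n)}^2}{L_n}U_n$. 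Since $U_n$ converges almost surely to a constant by \eqref{conv:rel_Sw} and $W_{(n)}^2/L_n\overset{\p}{\to}0$ by \eqref{eq:Wn_Ln}, while $W_{v_0}\le W_{(n)}$ and $W_{(n)}/q(n)\overset{d}{\to}Z_\beta$ is stochastically bounded, we get $\frac{1}{q(n)}\sup_{v_0\in[n]}\E_\cW|S_n(v_0)-T_n(v_0)|\le \frac{2W_{(n)}}{q(n)}\cdot\frac{W_{(n)}^2}{L_n}U_n\overset{\p}{\to}0$.

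It then remains to show $\frac1{q(n)}\sup_{v_0}\bigl|\E_\cW[T_n(v_0)]-W_{v_0}\xi\bigr|\overset{\p}{\to}0$, and this is where the two remaining hypotheses enter. Using $\one\{v_k\in\cX_n(v_0)\}=\one\{v_k\in\cX_n(v_0)\}$ combined with \eqref{eq:lem1_as1}, write
\begin{align*}
	\E_\cW[T_n(v_0)] = \sum_{k=1}^n\sum_{(v_1,\dots,v_k)\in([n]\setminus\{v_0\})^k_{\neq}}\E_\cW\Bigl[\prod_{i=1}^k\one\{v_i\leftrightarrow v_{i-1}\}\,\one_{\cI_n(v_0,\dots,v_k)}\Bigr]+\text{(error from }\cB_n(v_0)^c).
\end{align*}
On $\cB_n(v_0)^c$ the identity \eqref{eq:lem1_as1} need not hold, but the difference is again controlled by $\one_{\cB_n(v_0)^c}\overline T_n(v_0)$ up to a bounded combinatorial factor, so by Lemma \ref{Lem:indicator_no_cycles_hardly_changes_Tnv} it contributes at most a term of the same negligible order as above after dividing by $q(n)$. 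For the main term, conditional independence \eqref{eq:lem1_as3} lets us factor $\E_\cW[\prod_i\one\{v_i\leftrightarrow v_{i-1}\}\one_{\cI_n}] = \p_\cW(\cI_n(v_0,\dots,v_k))\prod_{i=1}^k\p_\cW(v_i\leftrightarrow v_{i-1})$. Now Lemma \ref{lem:prob_bound_connection} replaces each $\p_\cW(v_i\leftrightarrow v_{i-1})$ by $\frac{W_{v_i}W_{v_{i-1}}}{L_n}$ at the cost of a multiplicative factor between $1-\min(1,W_{(n)}^2/L_n)$ and $1$ per edge; expanding the product of these $k$ factors and bounding the resulting deviation by $k\,\frac{W_{(n)}^2}{L_n}$ times the leading term, together with the uniform bound $\sum_{k}\sum_{(v_1,\dots,v_k)}k\prod_i\frac{W_{v_i}W_{v_{i-1}}}{L_n}\p_\cW(\cI_n)\le W_{v_0}\sum_{k}(k+1)\bigl(\tfrac{L_n^{(2)}}{L_n}\bigr)^{k-1}$ (via Lemma \ref{Lem:paths} and $\p_\cW(\cI_n)\le 1$), shows this replacement error is again $O_{\p}(W_{v_0}\frac{W_{(n)}^2}{L_n})$ uniformly, hence $o_\p(q(n))$ after normalisation since $W_{v_0}\le W_{(n)}$. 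What is left after all these replacements is exactly the expression in \eqref{eq:lem1_as2}, which by hypothesis is $o_\p(q(n))$ uniformly in $v_0$.

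Collecting the three error estimates via the triangle inequality yields $\frac1{q(n)}\sup_{v_0\in[n]}\bigl|\E_\cW[S_n(v_0)]-W_{v_0}\xi\bigr|\overset{\p}{\to}0$, which is (A1). The main obstacle I anticipate is the bookkeeping needed to make the cycle-error and the connection-probability-linearisation error uniform in $v_0\in[n]$ simultaneously: one must be careful that all the crude bounds produced are of the form $W_{v_0}$ times a quantity that converges in probability to a constant (or to zero), so that the supremum over $v_0$ only costs a factor $W_{(n)}$, which is then killed by the $1/q(n)$ scaling together with $W_{(n)}^2/L_n\overset{\p}{\to}0$. The conditional-independence factorisation itself is immediate from \eqref{eq:lem1_as3} once one observes that the events $\{v_{i-1}\leftrightarrow v_i\}$ for a path of \emph{distinct} vertices depend on disjoint, hence conditionally independent, families of edge variables.
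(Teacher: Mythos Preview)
Your proposal is correct and follows essentially the same route as the paper. The paper streamlines the argument slightly by introducing the auxiliary statistic $\widetilde{T}_n(v_0)=\sum_{k}\sum_{(v_1,\dots,v_k)}\prod_i\one\{v_i\leftrightarrow v_{i-1}\}\one_{\cI_n(v_0,\dots,v_k)}$ and observing in one line that $\one_{\cB_n(v_0)}S_n(v_0)=\one_{\cB_n(v_0)}T_n(v_0)=\one_{\cB_n(v_0)}\widetilde{T}_n(v_0)$ via \eqref{eq:lem1_as1}; this lets it compare $S_n$ to $\widetilde{T}_n$ with a single invocation of Lemma~\ref{Lem:indicator_no_cycles_hardly_changes_Tnv}, whereas you route through $T_n$ and apply the cycle-error bound twice. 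The remaining steps---factoring via \eqref{eq:lem1_as3}, linearising the connection probabilities through Lemma~\ref{lem:prob_bound_connection} with the $k\cdot W_{(n)}^2/L_n$ correction, and invoking \eqref{eq:lem1_as2}---match the paper exactly, including the uniformity device of bounding everything by $W_{(n)}$ times a tight sequence. (Your phrase ``up to a bounded combinatorial factor'' is not needed: the difference $T_n-\widetilde{T}_n$ vanishes on $\cB_n(v_0)$ and is bounded by $\overline{T}_n(v_0)$ off it, with no extra factor.)
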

\begin{proof}
	We need to show
	\begin{align*}
		\frac{1}{q(n)}\underset{v=1,\dots,n}{\sup}\left|\E_\cW[S_n(v)]-W_{v}\xi\right|\overset{\p}{\longrightarrow} 0\quad \text{as}\quad n\to\infty.
	\end{align*}
	For $v\in[n]$, adding $v$ to $\cX_n(v)$ or removing $v$ from $\cX_n(v)$ changes the value of $S_n(v)$ by one. Due to the triangle inequality and $q(n)\to\infty$ as $n\to\infty$, this does not change whether the statement above is true or false. Therefore, we may assume without loss of generality that $v\notin\cX_n(v)$ for all $v\in[n]$. 	
	For all $v_0\in[n]$ we write 
	\begin{align*}
		\widetilde{T}_n(v_0)=\sum_{k=1}^n\sum_{(v_1,\dots,v_k)\in([n]\setminus\{v_0\})^k_{\neq}}\prod_{i=1}^k\one\{v_i\leftrightarrow v_{i-1}\} \one_{\cI_n(v_0,\dots,v_k)}
	\end{align*}
	and obtain with \eqref{eq:lem1_as1} and $v\notin\cX_n(v)$ for all $v\in[n]$ that
	\begin{align*}
		\one_{\cB_{n}(v)}S_n(v)=\one_{\cB_{n}(v)}T_n(v)=\one_{\cB_{n}(v)}\widetilde{T}_n(v).
	\end{align*}	
	We conclude 
	\begin{align*}
		\E_\cW[S_n(v)]&=\E_\cW[\one_{\cB_{n}(v)}S_n(v)]+\E_\cW[\one_{\cB_{n}(v)^c}S_n(v)]\\
		&=\E_\cW[\one_{\cB_{n}(v)}\widetilde{T}_n(v)]+\E_\cW[\one_{\cB_{n}(v)^c}S_n(v)]\\
		&=\E_\cW[\widetilde{T}_n(v)]-\E_\cW[\one_{\cB_{n}(v)^c}\widetilde{T}_n(v)]+\E_\cW[\one_{\cB_{n}(v)^c}S_n(v)]
	\end{align*}
	so that $S_n(v),\widetilde{T}_n(v)\leq \overline{T}_n(v)$ and Lemma \ref{Lem:indicator_no_cycles_hardly_changes_Tnv} yield
	\begin{align*}
		|\E_\cW[S_n(v)]-\E_\cW[\widetilde{T}_n(v)]|\leq 2\E_\cW[\one_{\cB_{n}(v)^c}\overline{T}_n(v)]\leq 2W_v\frac{W_{(n)}^2}{L_n}U_n\leq 2\frac{W_{(n)}^3}{L_n}U_n.
	\end{align*}
	This in turn provides us with
	\begin{align*}
		\frac{1}{q(n)}\sup_{v\in[n]}|\E_\cW[S_n(v)]-W_v\xi|\leq \frac{1}{q(n)}\sup_{v\in[n]}|\E_\cW[\widetilde{T}_n(v)]-W_v\xi|+\frac{2W_{(n)}^3}{q(n)L_n}U_n.
	\end{align*}
	Here the last summand converges in probability to zero as $n\to\infty$, see \eqref{conv:rel_Sw} for $U_n$ and \eqref{W(n):scaling} as well as \eqref{eq:Wn_Ln} for the other factors. Next, we bound the supremum on the right-hand side. By \eqref{eq:lem1_as3} we have
	\begin{align*}
		\E_\cW[\widetilde{T}_{n}(v_0)]&=\E_\cW\bigg[\sum_{k=1}^n\sum_{(v_1,\dots,v_k)\in([n]\setminus\{v_0\})^k_{\neq}}\prod_{i=1}^k \one\{v_i\leftrightarrow v_{i-1}\}\one_{\cI_n(v_0,\dots,v_k)}\bigg]\\
		&=\sum_{k=1}^n\sum_{(v_1,\dots,v_k)\in([n]\setminus\{v_0\})^k_{\neq}}\prod_{i=1}^k \p_\cW(v_i\leftrightarrow v_{i-1})\p_\cW(\cI_n(v_0,\dots,v_k))
	\end{align*}
	so that
	\begin{align*}
		&\frac{1}{q(n)}\sup_{v_0\in[n]}|\E_\cW[\widetilde{T}_n(v_0)]-W_{v_0}\xi|\\
		&\leq  \frac{1}{q(n)}\sup_{v_0\in[n]}\bigg|\sum_{k=1}^n\sum_{(v_1,\dots,v_k)\in([n]\setminus\{v_0\})^k_{\neq}}\prod_{i=1}^k \frac{W_{v_i}W_{v_{i-1}}}{L_n}\p_\cW(\cI_n(v_0,\dots,v_k))-W_{v_0}\xi\bigg|\\
		&\quad+ \frac{1}{q(n)}\sup_{v_0\in[n]}\sum_{k=1}^n\sum_{(v_1,\dots,v_k)\in([n]\setminus\{v_0\})^k_{\neq}}\bigg|\prod_{i=1}^k \p_\cW(v_i\leftrightarrow v_{i-1})-\prod_{i=1}^k\frac{W_{v_i}W_{v_{i-1}}}{L_n}\bigg|.
	\end{align*}
	By assumption \eqref{eq:lem1_as2}, the first term on the right-hand side converges in probability to zero as $n\to\infty$. For the second summand we obtain from Lemma \ref{lem:prob_bound_connection} for all $v_0,\dots,v_k\in[n]$,
	\begin{align*}
		&\bigg|\prod_{i=1}^k \p_\cW(v_i\leftrightarrow v_{i-1})-\prod_{i=1}^k\frac{W_{v_i}W_{v_{i-1}}}{L_n}\bigg|\leq \prod_{i=1}^k\frac{W_{v_i}W_{v_{i-1}}}{L_n}\bigg(1-\bigg(1-\min\bigg(1,\frac{W_{(n)}^2}{L_n}\bigg)\bigg)^k\bigg)\\
		&=W_{v_0}\prod_{i=1}^{k-1}\frac{W_{v_i}^2}{L_n}\frac{W_{v_k}}{L_n}\bigg(1-\bigg(1-\min\bigg(1,\frac{W_{(n)}^2}{L_n}\bigg)\bigg)^k\bigg)\leq W_{v_0}\prod_{i=1}^{k-1}\frac{W_{v_i}^2}{L_n}\frac{W_{v_k}}{L_n}k\frac{W_{(n)}^2}{L_n},
	\end{align*}
	where the last inequality uses that $1-y^k\leq k(1-y)$ for all $y\in[0,1]$ by the mean value theorem. We obtain 
	\begin{align*}
		&	\frac{1}{q(n)}\sup_{v_0\in[n]}\sum_{k=1}^n\sum_{(v_1,\dots,v_k)\in([n]\setminus\{v_0\})^k_{\neq}}\bigg|\prod_{i=1}^k \p_\cW(v_i\leftrightarrow v_{i-1})-\prod_{i=1}^k\frac{W_{v_i}W_{v_{i-1}}}{L_n}\bigg|\\
		&\leq \frac{1}{q(n)}\sup_{v_0\in[n]}\sum_{k=1}^n\sum_{(v_1,\dots,v_k)\in[n]^k} W_{v_0}\prod_{i=1}^{k-1}\frac{W_{v_i}^2}{L_n}\frac{W_{v_k}}{L_n}k\frac{W_{(n)}^2}{L_n}=\frac{W_{(n)}^3}{q(n)L_n}\sum_{k=1}^nk\bigg(\frac{L_n^{(2)}}{L_n}\bigg)^{k-1},
	\end{align*}
	which converges in probability to zero as $n\to\infty$ due to \eqref{W(n):scaling}, \eqref{eq:Wn_Ln} and \eqref{conv:rel_Sw}.
\end{proof}
\begin{lemma}\label{lemma:2}
	Assume {\rm{(W)}}. Suppose that for all $n\in\N,k\in[n]$ and $(v_0,\dots,v_k)\in[n]^{k+1}_{\neq}$ there exists an event $\cI_n(v_0,\dots,v_k)$ such that \eqref{eq:lem1_as3} as well as \eqref{eq:lem1_as1} hold. Moreover, assume that there exist a bounded, measurable function  $g\colon[0,\infty)\times\N\to[0,\infty)$, a polynomial $p$ of finite degree and random variables $(R_n)_{n\in\N}$ such that for all $n\in\N,k\in[n]$ and $(v_0,\dots,v_k)\in[n]^{k+1}_{\neq}$,
	\begin{align}
		&|\p_\cW(\cI_n(v_0,\dots,v_k))-g(W_{v_k},k)|\leq p(k)R_n\label{eq:lem2_bound_3}
	\end{align}
	and
	\begin{align}
		&R_n\overset{\p}{\longrightarrow}0 \quad\text{as}\quad n \to\infty.\label{eq:Rn_small}
	\end{align}
	Then, {\rm{(A1)}} is satisfied with\begin{align*}
		\xi=\sum_{k=1}^\infty \bigg(\frac{\E[W^2]}{\E[W]}\bigg)^{k-1}\frac{\E[Wg(W,k)]}{\E[W]}.
	\end{align*}
\end{lemma}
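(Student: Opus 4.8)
The plan is to derive $\mathrm{(A1)}$ from Lemma \ref{lemma:1}: the hypotheses \eqref{eq:lem1_as3} and \eqref{eq:lem1_as1} are already assumed, so it only remains to verify \eqref{eq:lem1_as2} for the stated $\xi$. Throughout write $M=\sup g<\infty$ and $\rho_n=L_n^{(2)}/L_n$, which converges almost surely to $\rho:=\E[W^2]/\E[W]\in(0,1)$ by assumption $\mathrm{(W)}$; recall also $n/L_n\overset{a.s.}{\longrightarrow}1/\E[W]$ and that $W_{(n)}/q(n)$ is tight since $W_{(n)}/q(n)\overset{d}{\longrightarrow}Z_\beta$ by \eqref{W(n):scaling}.

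First I would use \eqref{eq:lem2_bound_3} to replace $\p_\cW(\cI_n(v_0,\dots,v_k))$ by $g(W_{v_k},k)$ inside the sum in \eqref{eq:lem1_as2}. Since all terms are nonnegative, the resulting error is in absolute value at most
\begin{align*}
	R_n\sum_{k=1}^n p(k)\sum_{(v_1,\dots,v_k)\in[n]^k}\prod_{i=1}^k\frac{W_{v_i}W_{v_{i-1}}}{L_n}= W_{v_0}\,R_n\sum_{k=1}^n p(k)\,\rho_n^{\,k-1},
\end{align*}
so after dividing by $q(n)$ and taking the supremum over $v_0$ this contribution is bounded by $\tfrac{W_{(n)}}{q(n)}R_n\sum_{k=1}^n p(k)\rho_n^{\,k-1}$. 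The series converges almost surely by \eqref{conv:rel_Sw}, $W_{(n)}/q(n)$ is tight, and $R_n\overset{\p}{\longrightarrow}0$ by \eqref{eq:Rn_small}, so this term vanishes in probability.

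It remains to handle $\tfrac1{q(n)}\sup_{v_0}\bigl|\widehat T_n(v_0)-W_{v_0}\xi\bigr|$, where $\widehat T_n(v_0)=\sum_{k=1}^n\sum_{(v_1,\dots,v_k)\in([n]\setminus\{v_0\})^k_{\neq}}\prod_{i=1}^k\frac{W_{v_i}W_{v_{i-1}}}{L_n}g(W_{v_k},k)$. As the summands are nonnegative, enlarging the index set from $([n]\setminus\{v_0\})^k_{\neq}$ to all of $[n]^k$ changes $\widehat T_n(v_0)$ by at most the sum over the ``bad'' tuples in which two of $v_0,v_1,\dots,v_k$ coincide; bounding $g$ by $M$, extracting a factor $W_{(n)}^2/L_n$ for the forced coincidence and applying Lemma \ref{Lem:paths} exactly as in the proof of Lemma \ref{Lem:indicator_no_cycles_hardly_changes_Tnv}, one bounds this by $W_{v_0}\tfrac{W_{(n)}^2}{L_n}C_n$ with $C_n$ of the form \eqref{conv:rel_Sw}, hence almost surely convergent. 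Dividing by $q(n)$ and taking the supremum over $v_0$ leaves at most $\tfrac{W_{(n)}^3}{q(n)L_n}C_n$, which tends to $0$ in probability by \eqref{eq:Wn_Ln}, \eqref{W(n):scaling} and \eqref{conv:rel_Sw}, as in the proof of Lemma \ref{lemma:1}. On $[n]^k$ the remaining sum factorises,
\begin{align*}
	\sum_{k=1}^n\sum_{(v_1,\dots,v_k)\in[n]^k}\prod_{i=1}^k\frac{W_{v_i}W_{v_{i-1}}}{L_n}\,g(W_{v_k},k)=W_{v_0}\,c_n,\qquad c_n:=\sum_{k=1}^n\rho_n^{\,k-1}\,\frac1{L_n}\sum_{v=1}^n W_v\,g(W_v,k),
\end{align*}
and crucially $c_n$ does not depend on $v_0$, so $\tfrac1{q(n)}\sup_{v_0}|W_{v_0}c_n-W_{v_0}\xi|=\tfrac{W_{(n)}}{q(n)}|c_n-\xi|$.

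Thus everything reduces to showing $c_n\overset{a.s.}{\longrightarrow}\xi$, after which tightness of $W_{(n)}/q(n)$, Slutsky's theorem and the triangle inequality finish the proof. Writing the $k$-th summand of $c_n$ as $\rho_n^{\,k-1}\cdot\tfrac{n}{L_n}\cdot\bigl(\tfrac1n\sum_{v=1}^n W_v g(W_v,k)\bigr)$, the strong law of large numbers gives $\tfrac1n\sum_{v=1}^n W_v g(W_v,k)\to\E[W g(W,k)]$ almost surely for each fixed $k$ (the summands are i.i.d.\ and integrable since $g$ is bounded), while $\rho_n\to\rho$ and $n/L_n\to1/\E[W]$ almost surely. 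To interchange the limit with the sum over $k$, note $0\le\tfrac1n\sum_v W_v g(W_v,k)\le M\,\tfrac{L_n}{n}$ and that almost surely $\rho_n\le\rho'<1$ for all large $n$ (with $\rho'\in(\rho,1)$ fixed), so eventually the $k$-th summand of $c_n$ is dominated by $(\rho')^{k-1}$ times an almost surely finite random constant; dominated convergence with respect to the counting measure then yields $c_n\to\sum_{k=1}^\infty\rho^{\,k-1}\tfrac{\E[W g(W,k)]}{\E[W]}=\xi$ almost surely (note $\xi\le M/(1-\rho)<\infty$). I expect the main obstacle to be the combinatorial estimate for the ``bad-tuple'' correction uniformly in $v_0$ together with the domination step needed to pass the almost sure limit through the infinite series defining $\xi$; both are, however, routine given Lemmas \ref{Lem:paths} and \ref{Lem:indicator_no_cycles_hardly_changes_Tnv} and the fact that $\rho_n<1$ eventually.
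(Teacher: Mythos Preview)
Your proposal is correct and mirrors the paper's proof: the same three-step decomposition (replace $\p_\cW(\cI_n)$ by $g$ via \eqref{eq:lem2_bound_3}, enlarge the index set from $([n]\setminus\{v_0\})^k_{\neq}$ to $[n]^k$, then show $c_n\overset{a.s.}{\longrightarrow}\xi$) is exactly what the paper does, and your dominated-convergence justification of the last step is in fact more explicit than the paper's one-line appeal to the strong law. The only point to watch in the bad-tuple estimate is the case $v_k=v_0$, which the paper singles out separately because it produces a factor $W_{v_0}^2/L_n$ rather than $W_{(n)}^2/L_n$; however, after dividing by $q(n)$ and taking the supremum over $v_0$ this contribution is bounded by $M\,W_{(n)}^2/(q(n)L_n)\,S_{n,\cW}$ and still vanishes, so your conclusion is unaffected.
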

\begin{proof}
	By Lemma \ref{lemma:1} it suffices to show \eqref{eq:lem1_as2}. We have
	\begin{align*}
		&\frac{1}{q(n)}\sup_{v_0\in[n]}\bigg|\sum_{k=1}^n\sum_{(v_1,\dots,v_k)\in([n]\setminus\{v_0\})^k_{\neq}}\prod_{i=1}^k\frac{W_{v_i}W_{v_{i-1}}}{L_n}\p_\cW(\cI_n(v_0,\dots,v_k))-W_{v_0}\xi\bigg|\\
		&\leq \frac{1}{q(n)}\sup_{v_0\in[n]}\bigg|\sum_{k=1}^n\sum_{(v_1,\dots,v_k)\in([n]\setminus\{v_0\})^k_{\neq}}\prod_{i=1}^k\frac{W_{v_i}W_{v_{i-1}}}{L_n}\bigg(\p_\cW(\cI_n(v_0,\dots,v_k))-g(W_{v_k},k)\bigg)\bigg|\\
		&\quad+\frac{1}{q(n)}\sup_{v_0\in[n]}\bigg|\sum_{k=1}^n\sum_{(v_1,\dots,v_k)\in[n]^k\setminus([n]\setminus\{v_0\})^k_{\neq}}\prod_{i=1}^k\frac{W_{v_i}W_{v_{i-1}}}{L_n}g(W_{v_k},k)\bigg|\\
		&\quad+\frac{1}{q(n)}\sup_{v_0\in[n]}\bigg|\sum_{k=1}^n\sum_{(v_1,\dots,v_k)\in[n]^k}\prod_{i=1}^k\frac{W_{v_i}W_{v_{i-1}}}{L_n}g(W_{v_k},k)-W_{v_0}\xi\bigg|\\
		&\eqqcolon I_1+I_2+I_3.
	\end{align*}
	Due to \eqref{eq:lem2_bound_3} we have
	\begin{align*}
		I_1\leq \frac{W_{(n)}}{q(n)}\sum_{k=1}^n\bigg(\frac{L_n^{(2)}}{L_n}\bigg)^{k-1}p(k)R_n,
	\end{align*}
	which converges to zero in probability as $n\to\infty$, by \eqref{W(n):scaling}, \eqref{conv:rel_Sw} and \eqref{eq:Rn_small}. For $I_2$, we consider the elements $(v_1,\dots,v_k)\in[n]^k\setminus([n]\setminus\{v_0\})^k_{\neq}$ for a fixed $v_0\in[n]$, which means that $(v_1,\dots,v_k)$ has two equal entries or contains $v_0$. There are $\binom{k}{2}$ choices for $i,j\in[k]$ with $v_i=v_j$ for $i<j$. In this case, we may bound one factor by $W_{(n)}^2/L_n$ and omit the summation over $v_i$. On the other hand, it may be that there exists $i\in[k]$ such that $v_i=v_0$. If $i<k$, we have $(k-1)$ choices and can once more omit the summation over $v_i$ and bound the respective factor by $W_{(n)}^2/L_n$. For $i=k$, we obtain a different scenario due to the factor $g(W_{v_k},k)$. We derive
	\begin{align*}
		I_2&\leq \frac{W_{(n)}^3}{L_nq(n)}\sum_{k=2}^n\bigg(\binom{k}{2}+k\bigg)\bigg(\frac{L_n^{(2)}}{L_n}\bigg)^{k-2}\frac{\sum_{i=1}^nW_ig(W_i,k)}{L_n}\\
		&\quad +\sum_{k=1}^n \frac{\sup_{v_0\in[n]}W_{v_0}^2g(W_{v_0},k)}{L_nq(n)}\bigg(\frac{L_n^{(2)}}{L_n}\bigg)^{k-1}.
	\end{align*}
	In the first summand the first  factor vanishes as $n\to\infty$ due to \eqref{W(n):scaling} and \eqref{eq:Wn_Ln}, while the two sums are bounded because of \eqref{conv:rel_Sw} as well as the boundedness of $g$ and the law of large numbers. For the second summand, the first fraction converges in probability to zero as $n\to\infty$ by the boundedness of $g$, \eqref{q(n):scaling} and \eqref{eq:Wn_Ln}, whereas the remaining sum converges almost surely as $n\to\infty$ by \eqref{conv:Sw}.
	
	For $I_3$ we obtain
	\begin{align*}
		I_3&= \frac{W_{(n)}}{q(n)}\bigg|\sum_{k=1}^n\bigg(\frac{L_n^{(2)}}{L_n}\bigg)^{k-1}\frac{\sum_{i=1}^nW_ig(W_i,k)}{L_n}-\xi\bigg|.
	\end{align*}
	From the strong law of large numbers and the boundedness of $g$ we deduce
	\begin{align*}
		\sum_{k=1}^n\bigg(\frac{L_n^{(2)}}{L_n}\bigg)^{k-1}\frac{\sum_{i=1}^nW_ig(W_i,k)}{L_n}\overset{a.s.}{\longrightarrow}\sum_{k=1}^\infty\bigg(\frac{\E[W^2]}{\E[W]}\bigg)^{k-1}\frac{\E[Wg(W,k)]}{\E[W]}=\xi,
	\end{align*}
	as $n\to\infty$, which finishes the proof.
\end{proof}
We show that all our classes of vertices in Theorem \ref{Lem:assump_for_different_vertices} meet the assumptions of Lemma \ref{lemma:2}.
\begin{proof}[Proof that \textbf{all vertices} satisfy (A1)]
	For $n\in\N,k\in[n]$ and $(v_0,\dots,v_k)\in[n]^{k+1}_{\neq}$  we choose $\cI_n(v_0,\dots,v_k)$ as an event with probability one, $g=1$ as well as $p=R_n=0$. In this case it is easy to see that all assumptions of Lemma \ref{lemma:2} are met.
\end{proof}

\begin{proof}[Proof that \textbf{vertices in a fixed distance } $m$ \textbf{ to } $v_0$ satisfy (A1)]
	For $n\in\N,k\in[n]$ as well as  $(v_0,\dots,v_k)\in[n]^{k+1}_{\neq}$ we choose $\cI_n(v_0,\dots,v_k)=\{k=m\}$, $g=\one\{k=m\}$ and $p=R_n=0$. We see that all assumptions of Lemma \ref{lemma:2} are met.
\end{proof}

\begin{proof}[Proof that \textbf{vertices with a fixed degree } $m$ satisfy (A1)]
	We start with the case $m=1$, i.e.\ we consider leaves. We define for $n\in\N,k\in[n]$ and $(v_0,\dots,v_k)\in[n]^{k+1}_{\neq}$,
	\[
	\cI_n(v_0,\dots,v_k) = \{v_k\text{ has no neighbour in }[n]\setminus\{v_{k-1},v_k\}\}.
	\]
	Then $\cI_n(v_0,\dots,v_k)$ satisfies \eqref{eq:lem1_as3} and \eqref{eq:lem1_as1} since the path $v_0\hdots v_k$ ensures that $v_k$ already has one neighbour. Additionally, let $g(x,k)=e^{-x}$ and $p=1$. For \eqref{eq:lem2_bound_3} we calculate 
	\begin{align*}
		&\bigg|\p_\cW(\cI_n(v_0,\dots,v_k))-e^{-W_{v_k}}\bigg|=\bigg|\E_\cW\bigg[\prod_{x\in[n]\setminus\{v_{k-1},v_k\}}\one\{v_k\nleftrightarrow x\}\bigg]-e^{-W_{v_k}}\bigg|\\
		&=\bigg|\exp\bigg(-W_{v_k}\frac{\sum_{x\in[n]\setminus\{v_{k-1},v_k\}}W_x}{L_n}\bigg)-e^{-W_{v_k}}\bigg|\\
		&\leq e^{-W_{v_k}}\bigg(e^{2W_{(n)}/L_n}-1\bigg)\leq e^{2W_{(n)}/L_n}-1\eqqcolon R_n
	\end{align*}
	so that \eqref{eq:Rn_small}  holds due to \eqref{eq:Wn_Ln}.
	
	For vertices of degree $m\geq 2$ we proceed similarly. We define
	\[
	\cI_n(v_0,\dots,v_k) =\{v_k\text{ has exactly }m-1 \text{ neighbours in }[n]\setminus \{v_{k-1},v_k\}\}
	\]
	and see that \eqref{eq:lem1_as3} and \eqref{eq:lem1_as1} hold. We rewrite $\one_{\cI_n(v_0,\dots,v_k)}$ as
	\begin{align*}
		\frac{1}{(m-1)!}\hspace{-5pt}\sum_{(a_1,\dots,a_{m-1})\in([n]\setminus\{v_{k-1},v_k\})^{m-1}_{\neq}}\prod_{i=1}^{m-1}\one\{v_k\leftrightarrow a_i\}\hspace{-15pt}\prod_{x\in[n]\setminus\{a_1,\dots,a_{m-1},v_{k-1},v_k\}}\hspace{-15pt}\one\{v_k\nleftrightarrow x\}
	\end{align*}
	and define $g(x,k)=x^{m-1}e^{-x}/(m-1)!$, which is a bounded function as required. For \eqref{eq:lem2_bound_3} we provide upper and lower bounds for $\p_\cW(\cI_n(v_0,\dots,v_k))$. We compute with the equality above and Lemma \ref{lem:prob_bound_connection}
	\begin{align*}
		&\p_\cW(\cI_n(v_0,\dots,v_k))\\
		&\leq \frac{1}{(m-1)!}\hspace{-5pt}\sum_{a_1,\dots,a_{m-1}=1}^n\prod_{i=1}^{m-1}\frac{W_{a_i}W_{v_k}}{L_n}\exp\bigg(-W_{v_k}\hspace{-20pt}\sum_{x\in[n]\setminus\{a_1,\dots,a_{m-1},v_{k-1},v_k\}}\frac{W_{x}}{L_n}\bigg)\\
		&\leq \frac{W_{v_k}^{m-1}e^{-W_{v_k}}}{(m-1)!}\hspace{-5pt}\sum_{a_1,\dots,a_{m-1}=1}^n\prod_{i=1}^{m-1}\frac{W_{a_i}}{L_n}\exp\bigg(\frac{(m+1)W_{(n)}^2}{L_n}\bigg)\\
		&=g(W_{v_k},k)\exp\bigg(\frac{(m+1)W_{(n)}^2}{L_n}\bigg)\eqqcolon g(W_{v_k},k)\overline{I}_n,
	\end{align*}
	where the upper bound $\overline{I}_n$ satisfies $\overline{I}_n\overset{\p}{\longrightarrow}1$ as $n\to\infty$ by \eqref{eq:Wn_Ln}.	Moreover, by Lemma \ref{lem:prob_bound_connection}
	\begin{align*}
		&\p_\cW(\cI_n(v_0,\dots,v_k))\geq \frac{W_{v_k}^{m-1}e^{-W{v_k}}}{(m-1)!}\hspace{-25pt}\sum_{(a_1,\dots,a_{m-1})\in([n]\setminus\{v_{k-1},v_k\})^{m-1}_{\neq}}\prod_{i=1}^{m-1}\frac{W_{a_i}}{L_n}\bigg(1-\frac{W_{(n)}^2}{L_n}\bigg)^{m-1}\\
		&=  g(W_{v_k},k)\bigg(1-\frac{W_{(n)}^2}{L_n}\bigg)^{m-1}\bigg(1-\hspace{-5pt}\sum_{(a_1,\dots,a_{m-1})\in[n]^{m-1}\setminus([n]\setminus\{v_{k-1},v_k\})^{m-1}_{\neq}}\prod_{i=1}^{m-1}\frac{W_{a_i}}{L_n}\bigg).
	\end{align*}
	The last sum above involves all tuples $a$ of length $m-1$ which either contain the same entry twice or contain at least one entry equal to $v_{k-1}$ or $v_{k}$. This leads to
	\begin{align*}
		\sum_{(a_1,\dots,a_{m-1})\in[n]^{m-1}\setminus([n]\setminus\{v_{k-1},v_k\})^{m-1}_{\neq}}\prod_{i=1}^{m-1}\frac{W_{a_i}}{L_n}\leq \frac{W_{(n)}}{L_n}\bigg(\binom{m}{2}+2m\bigg)\leq 3m^2\frac{W_{(n)}}{L_n}
	\end{align*}
	so that 
	\begin{align*}
		\p_\cW(\cI_n(v_0,\dots,v_k))&\geq g(W_{v_k},k)\bigg(1-\frac{W_{(n)}^2}{L_n}\bigg)^{m-1}\bigg(1-3m^2\frac{W_{(n)}}{L_n}\bigg)\eqqcolon g(W_{v_k},k)\underline{I}_n,
	\end{align*}
	where the lower bound $\underline{I}_n$ satisfies $\underline{I}_n\overset{\p}{\longrightarrow}1$ as $n\to\infty$ by \eqref{eq:Wn_Ln}.	For $k\in[n],p=\sup_{x\in[0,\infty)}g(x,k)$ we obtain 
	\begin{align*}
		\big|\p_\cW(\cI_n(v_0,\dots,v_k))-g(W_{v_k},k)\big|& \leq g(W_{v_k},k)(|1-\overline{I}_n|+|1-\underline{I}_n|)\\
		&\leq p (|1-\overline{I}_n|+|1-\underline{I}_n|)\eqqcolon pR_n,
	\end{align*}
	which shows \eqref{eq:lem2_bound_3}. From $\underline{I}_n,\overline{I}_n \overset{\p}{\longrightarrow}1$ as $n\to\infty$ it follows $R_n\overset{\p}{\longrightarrow}0$ as $n\to\infty$ as demanded in \eqref{eq:Rn_small}.
\end{proof}

\begin{proof}[Proof that \textbf{terminal trees} satisfy (A1)]
	We consider a tree $T$ with $m$ vertices $1,\dots,m$ and root $1$. We write $V(T)=[m]$ for its vertices and $E(T)$ for its edges. For $n\in\N,k\in[n]$ and $(v_0,\dots,v_k)\in[n]^{k+1}_{\neq}$ let
	\begin{align*}
		\cI_n(v_0,\dots,v_k) = \{ &\exists (a_1,\dots,a_m)\in([n]\setminus\{v_0,\dots,v_{k-1}\})^m_{\neq}\text{ with } a_1=v_k \text{ such that the}\\
		&\text{ graph induced by }a_1,\dots,a_m\text{ with $a_1$ as root is isomorphic to $T$}\},	
	\end{align*}
	$p(k)=k+1$ and with  $c(T)$ as in Theorem \ref{Lem:assump_for_different_vertices}
	\begin{align*}
		g(x,k)=\frac{x^{\deg_T(1)}e^{-x}}{c(T)}\prod_{i=2}^m\frac{\E[W^{\deg_T(i)}e^{-W}]}{\E[W]}.
	\end{align*}
	We note that \eqref{eq:lem1_as3} and \eqref{eq:lem1_as1} are satisfied by the choice of $\cI_n(v_0,\dots,v_k)$. We write $\varphi\colon V(T)\to[n]$ for the map $i\mapsto a_i$ which assigns a vertex of $T$ to its corresponding vertex in $G_n$. Consequently, $\varphi(E(T))=\{\{\varphi(i),\varphi(j)\}\colon i,j\in V(T)\}$ denotes the edges of $T$ embedded into $G_n$ via $\varphi$. For $\varphi(V(T))$ to be isomorphic to $T$, we require all edges in $\varphi(E(T))$ to exist in $G_n$ and no further connections between any vertices in $\varphi(V(T))$ are allowed. Furthermore, all edges between $\varphi(V(T))$ and $[n]\setminus\varphi(V(T))$ are forbidden since the tree is supposed to be terminal, up to the exception of $a_1=v_k$ being connected to $v_{k-1}$. We gather all these forbidden edges in the set 
	\begin{align*}
		F_a=\{\{a_x,y\}\colon x\in V(T),y\in[n]\setminus\{x\}\}\setminus ( \varphi(E(T))\cup\{v_k,v_{k-1}\}).
	\end{align*}
	Here we do not have $\{x,x\}\in F_a$ for any $x\in V(T)$ since we chose the convention to ignore loops. Considering all possible choices for $a_1,\dots,a_m$ and permutations thereof yields
	\begin{align}
		\one_{\cI_n(v_0,\dots,v_k)}=c(T)^{-1}\hspace{-35pt}\sum_{\underset{a_1=v_k}{(a_1,\dots,a_m)\in([n]\setminus\{v_0,\dots,v_{k-1}\})^{m}_{\neq}}}\hspace{5pt}\prod_{\{i,j\}\in E(T)}\one\{a_i\leftrightarrow a_j\} \prod_{\{i,j\}\in F_a}\one\{i\nleftrightarrow j\}. \label{eq:terminal_trees_indicator}
	\end{align}
	All occurring factors are independent when conditioning on $\cW$. For \eqref{eq:lem2_bound_3} we provide upper and lower bounds for $\p_\cW(\cI_n(v_0,\dots,v_k))$. We start with the last product concerning the forbidden edges. Let $a_1=v_k$ and $a_2,\dots,a_m\in([n]\setminus \{v_0,\dots,v_{k}\})^{k-1}_{\neq}$. As $E_n\{x,y\}$ follows a Poisson distribution, we get
	\begin{align*}
		\E_\cW\bigg[\prod_{\{i,j\}\in F_a}\one\{i \nleftrightarrow j\}\bigg]=\prod_{\{i,j\}\in F_a}\exp\bigg(-\frac{W_iW_j}{L_n}\bigg).
	\end{align*}
	Since all factors are bounded by one, we may add similar factors for a lower bound. Since $F_a\subseteq\{\{a_x,y\}\colon x\in V(T),y\in[n]\}$ this yields
	\begin{align}
		\E_\cW\bigg[\prod_{\{i,j\}\in F_a}\one\{i \nleftrightarrow j\}\bigg]\geq \prod_{x=1}^m\prod_{y=1}^n\exp\bigg(-\frac{W_{a_x}W_y}{L_n}\bigg)=\prod_{x=1}^m\exp(-W_{a_x}). \label{eq:term_trees_lb1}
	\end{align}
	For an upper bound we observe that $F_a\supseteq \{\{a_x,y\}\colon x\in V(T),y\in[n]\setminus (\varphi(V(T))\cup \{v_{k-1},x\})\}$. In comparison to $\{\{a_x,y\}\colon x\in V(T),y\in[n]\}$, there are at most $m(m+2)$ fewer elements. Therefore,
	\begin{align}
		&\E_\cW\bigg[\prod_{\{i,j\}\in F_a}\one\{i \nleftrightarrow j\}\bigg]\leq \prod_{x=1}^m\prod_{y=1}^n\exp\bigg(-\frac{W_{a_x}W_y}{L_n}\bigg)\exp\bigg(m(m+2)\frac{W_{(n)}^2}{L_n}\bigg)\nonumber\\
		&=\prod_{x=1}^m\exp(-W_{a_x})\exp\bigg(m(m+2)\frac{W_{(n)}^2}{L_n}\bigg)\eqqcolon \prod_{x=1}^m\exp(-W_{a_x}) \overline{I}_n,\label{eq:term_trees_ub1}
	\end{align}
	where $\overline{I}_n$ converges in probability to $1$ as $n\to\infty$ due to \eqref{eq:Wn_Ln}. Next, we consider the first product in \eqref{eq:terminal_trees_indicator}. From Lemma \ref{lem:prob_bound_connection} and the fact that a tree with $m$ vertices has $m-1$ edges we get the upper bound
	\begin{align}
		\E_\cW\bigg[\prod_{\{i,j\}\in E(T)}\one\{a_i\leftrightarrow a_j\}\bigg]\leq \prod_{\{i,j\}\in E(T)} \frac{W_{a_i}W_{a_j}}{L_n}=W_{a_1}^{\deg_T(1)}\prod_{i=2}^m\frac{W_{a_i}^{\deg_T(i)}}{L_n} \label{eq:term_trees_ub2}
	\end{align}
	and similarly the lower bound 
	\begin{align}
		\E_\cW\bigg[\prod_{\{i,j\}\in E(T)}\one\{a_i\leftrightarrow a_j\}\bigg]&\geq W_{a_1}^{\deg_T(1)}\prod_{i=2}^m\frac{W_{a_i}^{\deg_T(i)}}{L_n} \times \bigg(1-\min(1,W_{(n)}^2/L_n)\bigg)^{m-1}\nonumber\\
		&\eqqcolon  W_{a_1}^{\deg_T(1)}\prod_{i=2}^m\frac{W_{a_i}^{\deg_T(i)}}{L_n}  \underline{I}_n,  \label{eq:term_trees_lb2}
	\end{align}
	where $\underline{I}_n$ converges in probability to $1$ as $n\to\infty$ by \eqref{eq:Wn_Ln}. Using the upper bounds \eqref{eq:term_trees_ub1} and \eqref{eq:term_trees_ub2} for the conditional expectation of \eqref{eq:terminal_trees_indicator} yields
	\begin{align*}
		&\p_\cW(\cI_n(v_0,\dots,v_k))\leq c(T)^{-1}\hspace{-35pt}\sum_{\underset{a_1=v_k}{(a_1,\dots,a_m)\in([n]\setminus\{v_0,\dots,v_{k-1}\})^{m}_{\neq}}}\hspace{-15pt} W_{a_1}^{\deg_T(1)}\prod_{i=2}^m\frac{W_{a_i}^{\deg_T(i)}}{L_n}  \prod_{x=1}^m\exp(-W_{a_x}) \overline{I}_n\\
		&\leq \frac{W_{v_k}^{\deg_T(1)}e^{-W_{v_k}}}{c(T)}\prod_{i=2}^m \frac{\sum_{a_i=1}^nW_{a_i}^{\deg_T(i)}e^{-W_{a_i}}}{L_n} \overline{I}_n \eqqcolon \frac{W_{v_k}^{\deg_T(1)}e^{-W_{v_k}}}{c(T)}X_n\overline{I}_n,
	\end{align*}
	where the weak law of large numbers gives us
	\begin{align*}
		X_n\overset{\p}{\longrightarrow} \prod_{i=2}^m\frac{\E[W^{\deg_T(i)}e^{-W}]}{\E[W]}\eqqcolon X \quad\text{as}\quad n\to\infty.
	\end{align*}
	The lower bounds \eqref{eq:term_trees_lb1} and \eqref{eq:term_trees_lb2} provide
	\begin{align*}
		&\p_\cW(\cI_n(v_0,\dots,v_k))\geq c(T)^{-1}\hspace{-35pt}\sum_{\underset{a_1=v_k}{(a_1,\dots,a_m)\in([n]\setminus\{v_0,\dots,v_{k-1}\})^{m}_{\neq}}}\hspace{-15pt} W_{a_1}^{\deg_T(1)}\prod_{i=2}^m\frac{W_{a_i}^{\deg_T(i)}}{L_n}  \prod_{x=1}^m\exp(-W_{a_x}) \underline{I}_n\\
		&=\frac{W_{v_k}^{\deg_T(1)}e^{-W_{v_k}}}{c(T)}\underline{I}_n\bigg(X_n -\hspace{-5pt}\sum_{(a_2,\dots,a_m)\in[n]^{m-1}\setminus([n]\setminus\{v_0,\dots,v_k\})^{m-1}_{\neq}} \prod_{i=2}^m\frac{W_{a_i}^{\deg_T(i)}e^{-W_{a_x}}}{L_n}\bigg).
	\end{align*}
	In the last term we sum over all tuples $(a_2,\dots,a_m)$ which contain two equal entries or one entry from the set $\{v_0,\dots,v_k\}$. The number of such tuples is bounded by $$\binom{m-1}{2}n^{m-2}+(k+1)(m-1)n^{m-2},$$ where the first summand accounts for two equal entries whereas the second one considers the case where one entry lies in $\{v_0,\dots,v_k\}$. Noting that $C\coloneqq \sup_{x\in[0,\infty)}x^me^{-x}<\infty$ as well as $\underline{I}_n\leq1, \deg_T(i)\leq m$ we obtain
	\begin{align*}
		&\frac{W_{v_k}^{\deg_T(1)}e^{-W_{v_k}}}{c(T)}\underline{I}_n\hspace{-10pt}\sum_{(a_2,\dots,a_m)\in[n]^{m-1}\setminus([n]\setminus\{v_0,\dots,v_k\})^{m-1}_{\neq}}\prod_{i=2}^m\frac{W_{a_i}^{\deg_T(i)}e^{-W_{a_i}}}{L_n} \\
		&\leq \frac{C^{m}}{c(T)} \frac{n^{m-2}}{L_n^{m-1}}\bigg(\binom{m-1}{2}+(k+1)(m-1)\bigg)\\
		&\leq (k+1)\frac{C^{m}}{c(T)}\frac{n^{m-2}}{L_n^{m-1}}\bigg({\binom{m-1}{2}+(m-1)}\bigg)\eqqcolon (k+1)J_n= p(k)J_n,
	\end{align*}
	where $J_n$ converges in probability to $0$ as $n\to\infty$. The lower and upper bound provide
	\begin{align*}
		&\bigg|\p_\cW(\cI_n(v_0,\dots,v_k))-g(W_{v_k},k)\bigg|\\
		&\leq p(k)J_n+\frac{W_{v_k}^{\deg_T(1)}e^{-W_{v_k}}}{c(T)}\bigg(|X-X_n\overline{I}_n|+|X-X_n\underline{I}_n|\bigg)\\
		&\leq p(k)\bigg(J_n+\frac{C}{c(T)}\bigg(|X-X_n\overline{I}_n|+|X-X_n\underline{I}_n|\bigg)\bigg)\eqqcolon p(k)R_n,
	\end{align*}
	which is condition \eqref{eq:lem2_bound_3}. Since $J_n,X_n,\overline{I}_n$ and $\underline{I}_n$ do not depend on $k$ and converge in probability to $0,X,1$ and $1$ as $n\to\infty$, respectively, we conclude that $R_n\overset{\p}{\longrightarrow}0$ as $n\to\infty$, showing \eqref{eq:Rn_small} and finishing the proof.
\end{proof}

\end{document}